\setlist{itemsep=0em, topsep=0em, parsep=0em}
\setlist[enumerate]{label=(\alph*)}
\newcommand*{\secref}[1]{\S\ref{#1}}
\newcommand*{\email}[1]{%
  \normalsize\href{mailto:#1}{\texttt{#1}}\par
}
\newcommand*{\citep}[1]{\parencite{#1}}
\newcommand*{\citet}[1]{\textcite{#1}}
\def\mdash{{\hbox{-}}}
\newcommand{\adjunction}{\@ifstar\named@adjunction\normal@adjunction}
\newcommand{\normal@adjunction}[4]{%
  #1\colon #2%
  \mathrel{\vcenter{%
    \offinterlineskip\m@th
    \ialign{%
      \hfil$##$\hfil\cr
      \longrightharpoonup\cr
      \noalign{\kern-.3ex}
      \smallbot\cr
      \longleftharpoondown\cr
    }%
  }}%
  #3 \noloc #4%
}
\newcommand{\named@adjunction}[4]{%
  #2%
  \mathrel{\vcenter{%
    \offinterlineskip\m@th
    \ialign{%
      \hfil$##$\hfil\cr
      \scriptstyle#1\cr
      \noalign{\kern.1ex}
      \longrightharpoonup\cr
      \noalign{\kern-.3ex}
      \smallbot\cr
      \longleftharpoondown\cr
      \scriptstyle#4\cr
    }%
  }}%
  #3%
}
\newcommand{\longrightharpoonup}{\relbar\joinrel\rightharpoonup}
\newcommand{\longleftharpoondown}{\leftharpoondown\joinrel\relbar}
\newcommand\noloc{%
  \nobreak
  \mspace{6mu plus 1mu}
  {:}
  \nonscript\mkern-\thinmuskip
  \mathpunct{}
  \mspace{2mu}
}
\newcommand{\smallbot}{%
  \begingroup\setlength\unitlength{.15em}%
  \begin{picture}(1,1)
  \roundcap
  \polyline(0,0)(1,0)
  \polyline(0.5,0)(0.5,1)
  \end{picture}%
  \endgroup
}
\newcommand\optar[2]{\ensuremath{\braket{#1\:}{\: #2}}}
\newcommand\bdag[1]{\ensuremath{#1_{(\cdot)}^\dag}}
\newcommand*\circled[3]
\newcommand{\docircL}[3]{\mathbin{\circled{#1}{#2}{\scalebox{#3}{\tiny{$\mathsf{L}$}}}}}
\newcommand{\docircR}[3]{\mathbin{\circled{#1}{#2}{\scalebox{#3}{\tiny{$\mathsf{R}$}}}}}
\newcommand*{\circL}{
  \mathchoice{\docircL{0.75pt}{-0.60ex}{1}}
             {\docircL{0.75pt}{-0.60ex}{1}}
             {\docircL{0.3pt}{-0.45ex}{0.70}}
             {\docircL{0.1pt}{-0.35ex}{0.50}}}
\newcommand*{\circR}{
  \mathchoice{\docircR{0.75pt}{-0.60ex}{1}}
             {\docircR{0.75pt}{-0.60ex}{1}}
             {\docircR{0.3pt}{-0.45ex}{0.70}}
             {\docircR{0.1pt}{-0.35ex}{0.50}}}
\let\op=\relax
\DeclareMathOperator{\op}{^\text{op}}
\newcommand{\nn}{{\mathbb{N}}}
\newcommand{\Cat}[1]{\mathbf{#1}}
\newcommand{\cat}[1]{\mathcal{#1}}
\newcommand{\Fun}[1]{\mathsf{#1}}
\newcommand{\Kl}{\mathcal{K}\mspace{-2mu}\ell}
\newcommand{\EM}{\mathcal{E}\mspace{-3mu}\mathcal{M}}
\let\Pr\relax
\DeclareMathOperator{\Pr}{P}
\renewcommand{\d}{\mathrm{d}}
\DeclareMathOperator{\dom}{dom}
\DeclareMathOperator{\cod}{cod}
\newcommand{\Pow}{\mathcal{P}}
\newcommand{\Dst}{\mathcal{D}}
\newcommand{\Giry}{\mathcal{G}}
\DeclareMathOperator{\id}{\mathsf{id}}
\DeclareMathOperator{\Set}{\Cat{Set}}
\newcommand{\xto}[1]{\xrightarrow{#1}}
\newcommand{\mathoverlap}[2]{\mathpalette\mathoverlap@{{#1}{#2}}}
\newcommand{\mathoverlap@}[2]{\mathoverlap@@{#1}#2}
\newcommand{\mathoverlap@@}[3]{\ooalign{$\m@th#1#2$\crcr\hidewidth$\m@th#1#3$\hidewidth}}
\newcommand{\klcirc}{\bullet} %
\newcommand*{\smallklcirc}{\raisebox{0.18ex}{\scalebox{0.66}{$\klcirc$}}}
\newcommand{\klto}{\mathoverlap{\rightarrow}{\smallklcirc\,}}
\newcommand{\xklto}[1]{\mathoverlap{\xrightarrow{#1}}{\smallklcirc\,}}
\newcommand{\lenscirc}{
  \mathbin{\mathoverlap{\circ}{\raisebox{0.375ex}{\scalebox{1.0}[0.33]{$|$}}}}
}
\newcommand{\lensto}{\mathrel{\ooalign{\hfil$\mapstochar\mkern5mu$\hfil\cr$\to$\cr}}}
\providecommand*{\xmapstofill@}{%
  \arrowfill@{\mapstochar\relbar}\relbar\rightarrow
}
\providecommand*{\xmapsto}[2][]{%
  \ext@arrow 0395\xmapstofill@{#1}{#2}%
}
\def\slashedarrowfill@#1#2#3#4#5{%
  $\m@th\thickmuskip0mu\medmuskip\thickmuskip\thinmuskip\thickmuskip
   \relax#5#1\mkern-7mu%
   \cleaders\hbox{$#5\mkern-2mu#2\mkern-2mu$}\hfill
   \mathclap{#3}\mathclap{#2}%
   \cleaders\hbox{$#5\mkern-2mu#2\mkern-2mu$}\hfill
   \mkern-7mu#4$%
}
\def\rightslashedarrowfill@{%
  \slashedarrowfill@\relbar\relbar\mapstochar\rightarrow}
\newcommand\xslashedrightarrow[2][]{%
  \ext@arrow 0055{\rightslashedarrowfill@}{#1}{#2}}
\theoremstyle{definition}
\newtheorem{defn}{Definition}[section]
\newtheorem{ex}[defn]{Example}
\newtheorem{rmk}[defn]{Remark}
\newtheorem{prop}[defn]{Proposition}
\newtheorem{lemma}[defn]{Lemma}
\newtheorem{thm}[defn]{Theorem}
\newtheorem{cor}[defn]{Corollary}
\newtheorem*{thm*}{Theorem}
\newtheorem*{cor*}{Corollary}
\definecolor{darkblue}{rgb}{0,0,0.7}
\author{Toby St. Clere Smithe}
\affil{Department of Experimental Psychology, \\ University of Oxford \\ \email{arxiv@tsmithe.net}}
\tikzstyle{xshiftu}=[shift = {(#1, 0)}]
\tikzstyle{yshiftu}=[shift = {(0, #1)}]
\tikzstyle{dot}=[inner sep=0.25mm,minimum width=1mm,minimum height=1mm,draw,shape=circle,text depth=-0.2mm]
\tikzstyle{white dot}=[dot,fill=white, draw=black]
\tikzstyle{action}=[dot,fill=white,scale=0.667,inner sep=0.5mm]
\tikzstyle{copier}=[dot,fill=white,scale=2.0]
\tikzstyle{black copier}=[dot,fill=black,scale=2.0]
\tikzstyle{box}=[fill=white, draw=black, shape=rectangle]
\tikzstyle{medium box}=[fill=white, draw=black, shape=rectangle, minimum width=1.5cm, minimum height=0.66cm]
\tikzstyle{arrow box}=[fill=white, draw, shape=rectangle,minimum height=5mm,yshift=-0.5mm,minimum width=5mm]
\tikzstyle{effect}=[regular polygon, regular polygon sides=3,draw]
\tikzstyle{state0}=[regular polygon, regular polygon sides=3,draw,shape border rotate=0]
\tikzstyle{state90}=[regular polygon, regular polygon sides=3,draw,shape border rotate=90]
\tikzstyle{state180}=[regular polygon, regular polygon sides=3,draw,shape border rotate=180]
\tikzstyle{state270}=[regular polygon, regular polygon sides=3,draw,shape border rotate=270]
\tikzstyle{scalar}=[diamond,draw,inner sep=1pt]
\tikzstyle{discarder}=[my ground,draw,inner sep=0pt,minimum width=4.2pt,minimum height=11.2pt,anchor=input,rotate=90]
\tikzstyle{discarder0}=[my ground,draw,inner sep=0pt,minimum width=4.2pt,minimum height=11.2pt,anchor=input,rotate=0]
\tikzstyle{pointy1}=[->]
\tikzstyle{midpoint1}=[-, {postaction={decorate,decoration={markings, mark=at position .5 with {\arrow{>}}}}}]
\tikzstyle{midpointy1pointy}=[->, {postaction={decorate,decoration={markings, mark=at position .5 with {\arrow{>}}}}}]
\tikzstyle{dashed1}=[-, dashed]
\tikzstyle{dotted1}=[-, dotted]
\tikzstyle{dash-pointy}=[->, dashed]
\newsavebox\sbground
\savebox\sbground{%
  \begin{tikzpicture}[baseline=0pt]
    \draw (0,-.1ex) to (0,.85ex)
    node[ground IEC,draw,anchor=input,inner sep=0pt,
    minimum width=3.15pt,minimum height=8.4pt,rotate=90] {};
  \end{tikzpicture}%
}
\newcommand{\ground}{\mathord{\usebox\sbground}}
\newsavebox\sbcopier
\savebox\sbcopier{%
  \begin{tikzpicture}[baseline=0pt]
    \node[copier,scale=0.7] (a) at (0,3.8pt) {};
    \draw (a) -- +(-90:.21);
    \draw (a) -- +(45:.21);
    \draw (a) -- +(135:.21);
  \end{tikzpicture}}
\newcommand{\copier}{\mathord{\usebox\sbcopier}}
\newsavebox\bsbcopier
\savebox\bsbcopier{%
  \begin{tikzpicture}[baseline=0pt]
    \node[black copier,scale=0.7] (a) at (0,3.8pt) {};
    \draw (a) -- +(-90:.21);
    \draw (a) -- +(45:.21);
    \draw (a) -- +(135:.21);
  \end{tikzpicture}}
\newcommand{\bcopier}{\mathord{\usebox\bsbcopier}}
\date{\today}
\title{Bayesian Updates Compose Optically}
\begin{document}

\maketitle
\begin{abstract}
Bayes' rule tells us how to invert a causal process in order to update our beliefs in light of new evidence. If the process is believed to have a complex compositional structure, we may ask whether composing the inversions of the component processes gives the same belief update as the inversion of the whole. We answer this question affirmatively, showing that the relevant compositional structure is precisely that of the \emph{lens} pattern, and that we can think of Bayesian inversion as a particular instance of a state-dependent morphism in a corresponding fibred category. We define a general notion of (mixed) Bayesian lens, and discuss the (un)lawfulness of these lenses when their contravariant components are exact Bayesian inversions. We prove our main result both abstractly and concretely, for both discrete and continuous states, taking care to illustrate the common structures.
\end{abstract}

\section{Introduction}
\label{sec:org92a0107}
\label{sec:intro}

Bayesian inference appears whenever we wish to understand the latent causes of stochastically generated data. In this paper, we show how the inversion of a generative process fits a pattern known as \emph{optics} \citep{Riley2018Categories,Clarke2020Profunctor} that describes various kinds of bidirectional transformation. In particular, we show that the Bayesian inverse of a complex stochastic process can be constructed compositionally according to this common pattern. We will assume that the reader has some rudimentary knowledge of category theory, but not necessarily either of optics or of categorical approaches to probability. As such, we have attempted to keep this paper self-contained. For basic introductions to category theory and its applications, we recommend \citet{Leinster2016Basic} and \citet{Fong2018Seven}.

As a slogan, our main result is that \emph{Bayesian updates compose optically} (Theorem \ref{thm:optical-bayes}). The following corollary states this more formally:
\begin{cor*}[\ref{cor:optical-bayes}]
Let \(\cat{C}\) be a copy-delete category (Definition \ref{def:cd-cat}), and let \(\cat{C}^\dag\) be its wide subcategory of morphisms that admit Bayesian inversion (Definition \ref{def:admit-bayes}). Then \(\cat{C}^\dag\) embeds functorially into the category of Bayesian lenses \(\Cat{BayesLens}\)  (Definition \ref{def:bayes-lens}).
\end{cor*}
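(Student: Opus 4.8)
The plan is to build the embedding $F\colon \cat{C}^\dag \to \Cat{BayesLens}$ directly and to read off its functoriality from the main theorem, so that the corollary becomes essentially a repackaging of Theorem \ref{thm:optical-bayes}. On objects, I would send each $X$ to the ``doubled'' lens object $(X,X)$. On morphisms, I would send a channel $f\colon X \to Y$ of $\cat{C}^\dag$ --- which by Definition \ref{def:admit-bayes} supplies, for every state $\pi$ on $X$, a Bayesian inverse $f^\dag_\pi\colon Y \to X$ --- to the Bayesian lens whose forward component is $f$ itself and whose state-dependent backward component is the assignment $\pi \mapsto f^\dag_\pi$. That this is a well-formed object of $\Cat{BayesLens}$ is exactly the condition $f \in \cat{C}^\dag$.

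It then remains to check the two functor axioms. For identities, the identity channel $\id_X$ is its own Bayesian inverse at every prior (the forward joint on $X \otimes X$ is already the symmetric, perfectly-correlated state), so $F(\id_X)$ has identity backward component and is the identity Bayesian lens on $(X,X)$. For composition, given $f\colon X\to Y$ and $g\colon Y\to Z$ I would compare $F(g\circ f)$ with $F(g)\circ F(f)$. The forward components agree on the nose, both being $g\circ f$. Evaluated at a prior $\pi$, the backward component of $F(g)\circ F(f)$ is the lens (optical) composite $f^\dag_\pi \circ g^\dag_{f_{*}\pi}$, while that of $F(g\circ f)$ is $(g\circ f)^\dag_\pi$; these coincide precisely because Theorem \ref{thm:optical-bayes} identifies the chain rule for Bayesian inversion with optical composition. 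Hence $F$ preserves composition. This step also re-exhibits the Bayesian inverse of $g\circ f$, confirming that $\cat{C}^\dag$ is closed under composition and that $F$ indeed lands in $\Cat{BayesLens}$.

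Finally, $F$ is an embedding: it is faithful because the forward component of $F(f)$ is literally $f$, so $F(f)=F(f')$ forces $f=f'$; and it is injective on objects because object-doubling $X\mapsto(X,X)$ is injective.

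The substance of the argument is entirely in the composition axiom, i.e.\ in the main theorem; everything else is bookkeeping. The one genuine subtlety I expect to have to address is \emph{well-definedness} of the backward assignment: a Bayesian inverse is pinned down only on the support of the pushforward, so $f^\dag_\pi$ need not be unique. To make $F$ a strict functor one must either fix a choice of representative inverse or pass to the quotient in which backward components agreeing $f_{*}\pi$-almost-everywhere are identified, reading the equality furnished by Theorem \ref{thm:optical-bayes} in that same sense; faithfulness is untouched by this choice since it uses only the forward component.
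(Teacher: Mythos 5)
Your proposal is correct and takes essentially the same approach as the paper, which likewise defines the embedding by \(X \mapsto (\hat{X}, \check{X})\) and \(c \mapsto \optar{c}{c^\dag}\) and obtains functoriality directly from Theorem \ref{thm:optical-bayes}, remarking that a choice of inversion is required because inverses are only determined up to almost-equality. Your additional observation that even after such a choice the composition axiom only holds up to almost-equality of the backward components, so that equality in \(\Cat{BayesLens}\) must be read in the correspondingly quotiented sense, is if anything slightly more careful than the paper's own remark.
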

To supply some intuition for these ideas, we situate this work in the nascent discipline of categorical cybernetics: although Bayesian inference is very widely applicable, the cybernetic context makes plain the bidirectional information flow in which we are interested.

We think of a cybernetic system as being embedded in some environment, and aiming to control some aspect of that environment---such as its habitability. In order for the system to achieve its aims, it must maintain a representation of the state of the environment which it seeks to control. But this external state may not be directly accessible to the system, and moreover the process by which the system's inputs are generated from the environmental state is likely to be stochastic; or, the system may only have a partial view of this state. Somehow, in forming a representation of the relevant environmental state, the system must invert this generative process.

In such a setting, we can model the generative process by a stochastic channel \(X \klto Y\), where \(X\) is some space of environmental states and \(Y\) is some space of `sensory' inputs to the system. Part of the system's task is therefore to obtain from this channel an \emph{inverse} channel \(Y \klto X\) by which it can infer, given some sensory data in \(Y\), a belief about the environmental state in \(X\) which caused that sensory data. This process of inference is known as \emph{Bayesian inference}, following Bayes' theorem of probability.

There is an inherent bidirectionality here: a generative forwards channel \(X \klto Y\) and an inverse backwards channel \(Y \klto X\) that, according to Bayes' law, depends also on a prior belief about \(X\). The abstract pattern that captures this kind of `dependent' bidirectionality is called a \emph{lens}. Lenses were originally developed in database theory \citep{Foster2007Combinators}, where the idea is that given a database record and a field within it, one can zoom in on the field and view its value inside the record; then in the other direction, given a record and a new value for a field, one can go back and obtain a correspondingly updated record. Consequently, we call the first transformation \(\mathsf{view}\) and the second \(\mathsf{update}\). The inference process is similar: the environment causes input data (a partial `\(\mathsf{view}\)'); then, given some prior belief about the environment's state and this sensory data, the system can \(\mathsf{update}\) its belief to reflect the new input information.

In the database context, these transformations are functions \(\mathsf{view} : X \to Y\) and \(\mathsf{update} : X \times Y \to X\) that are both morphisms in the same category, typically the category \(\Set\) of sets and functions; such lenses are called \emph{Cartesian}. But, given a stochastic channel \(c : X \klto Y\), the corresponding Bayesian inversion operation is in general \emph{not} another channel \(c^\dag : X \otimes Y \klto X\) in the same category \(\cat{C}\): instead, it is a map of the form \(\Pow X \to \cat{C}(Y, X)\), where \(\Pow X\) denotes some space of states (\emph{i.e.}, prior beliefs) on \(X\). Our first main contribution is to formalize this fibrationally: given a base category of channels \(\cat{C}\), there is a fibre over each \(X : \cat{C}\) whose morphisms \(B \klto A\) are \(X\text{-state-dependent}\) channels of the form \(\Pow X \to \cat{C}(B, A)\), and which compose compatibly with both the horizontal structure in the base category and the vertical structure in the fibres. The operation of Bayesian inversion is such a state-dependent channel, and following \citet{Spivak2019Generalized} we can use this structure to define a particular abstract category of lenses whose \(\mathsf{view}\) maps live in the base and whose \(\mathsf{update}\) maps live in the fibres.

In keeping with the bidirectionality of lenses, the \(\mathsf{view}\) transformations compose covariantly, while the backwards \(\mathsf{update}\) transformations compose contravariantly. This just means that, given a composite generative process \(X \klto Y \klto Z\), we form the composite inverse by first inverting the (causally proximal) second process \(Y \klto Z\), and then inverting the (distal) first process \(X \klto Y\). For example, at the cinema, the projector determines the state of the whole screen, which reflects light onto the retinae of the viewer. Then, while focusing on a small region of the screen, the viewer's brain maintains a belief about the whole screen, and updates it by first inferring from the retinal signals the picture on the region under focus, then inferring the new state of the whole screen from the new belief about this region.

Our second main contribution is thus to show that the Bayesian inversion of a composite channel, following Bayes' rule, is equivalent (up to almost-equality) to the contravariant lens composition of the inversions of the component channels. We prove this result both abstractly and concretely, for both discrete and continuous probability. With the help of the Yoneda embedding, we also show how to translate the fibred category of lenses into optic form, and thereby recover an equivalent category of Bayesian lenses that is indeed Cartesian (in the database sense). This allows us to define \emph{mixed} Bayesian lenses, whose `vertical' category is different from the base category, but which still captures the state-dependence of Bayesian inversion.

Finally, lenses as introduced in the database literature are often accompanied by \emph{lens laws} which capture aspects of their behaviour that are desirable in the database context. We show that `exact' Bayesian lenses are only weakly lawful in this sense, but that this is desirable: the `beliefs' held by a database are Boolean (either true or false), whereas Bayesian beliefs are in general fuzzy mixtures, and such mixing is contradicted by the lens laws.

\paragraph{Overview of paper}
\label{sec:org9b05a2e}

We have attempted to keep the presentation in this paper self-contained, and assume that the reader may not be familiar with either categorical probability theory or coend optics. As such, in section \secref{sec:math-bg}, we summarize the key structures: basic categorical probability in \secref{sec:comp-prob} (abstract and concrete, discrete and continuous) and optics and lenses in \secref{sec:optics}; in Appendix \secref{sec:coends} we give a brief summary of coend calculus necessary for optics. Since a number of our proofs are made graphically, we also introduce the necessary graphical calculi along the way.

In \secref{sec:stat-cat} we introduce fibred categories of state-dependent channels and the corresponding Grothendieck lenses; then in \secref{sec:bayes-lens} we translate these into optic form, defining categories of Bayesian lenses. In \secref{sec:bayes-compose}, we prove that the Bayesian inversion of a composite channel is (almost-)equal to the lens composite of the Bayesian inversions of the channel's factors, in each of the categories introduced in \secref{sec:comp-prob}. Finally, in \secref{sec:lawfulness} we discuss the `lawfulness' of Bayesian lenses.

\paragraph{Contributions}
\label{sec:org9a50efa}

We define a collection of fibred categories whose morphisms depend on states in the base category (Definition \ref{def:stat-cat}), and show that Bayesian inversion is an instance of such a state-dependent morphism (Example \ref{ex:stat-meas}). The abstract pattern is however more general, and so we expect it to be more widely applicable.

We show how to construct categories of optics from such fibred categories, using their (co)Yoneda embeddings to define actegory structures (Proposition \ref{prop:odot-actegory}), and we define a corresponding notion of \emph{Bayesian lens} (Definition \ref{def:bayes-lens}). We generalize this to mixed optics (Definition \ref{def:mixed-bayes-lens}), and exemplify the generalization with state-dependent algebra homomorphisms (Example \ref{ex:stat-alg}).

We prove that the Bayesian inversion of a composite channel coincides with the lens composite of the inversions of its factors (Theorem \ref{thm:optical-bayes}). Consequently, we show that stochastic channels embed functorially into Bayesian lenses (Corollary \ref{cor:optical-bayes}). We show that `exact' Bayesian lenses are only weakly lawful (\secref{sec:lawfulness}).

We hope to have presented these results and constructions pedagogically, so that this paper may serve as a useful introduction to some of the important structures and techniques of the nascent discipline of categorical cybernetics. To this end, the background section (\secref{sec:math-bg}) is comprehensive but informal, and we provide comparative proofs of Theorem \ref{thm:optical-bayes} (abstract and concrete, discrete and continuous).

\paragraph{Notation}
\label{sec:orga476c6e}

We write \(\cat{C}_0\) for the set of objects (0-cells) in the category \(\cat{C}\). We write \(\cat{C}(-, X)\) and \(\cat{C}(X, -)\) for the representable presheaf and copresheaf on the object \(X : \cat{C}\). Where \(\cat{C}\) is supposed to be a category of stochastic channels, we write its composition operator as \(\klcirc\) and denote morphisms (channels) by \(X \klto Y\). Otherwise, we write the composition operator as \(\circ\) and morphisms as \(X \to Y\), except for lenses and optics, where we write \(\lenscirc\) and \((X, A) \lensto (Y, B)\). Given a stochastic channel \(c\), we often adopt `conditional probability notation' \(c(B|x)\) to indicate the probability of \(B\) given \(x\); we remind the reader of this at the relevant points.

\paragraph{Acknowledgements}
\label{sec:org0df9e3f}

The author thanks Bruno Gavranović, Jules Hedges, and Neil Ghani for stimulating and insightful conversations, and credits Jules Hedges for observing the Cartesian lens form of the Bayesian \(\mathsf{update}\) map in discussion at SYCO 6, and for indicating problems with an earlier version of these results.

\section{Mathematical background and graphical calculi}
\label{sec:org23da094}
\label{sec:math-bg}

\subsection{Compositional probability theory}
\label{sec:orgf90a183}
\label{sec:comp-prob}

In informal scientific literature, Bayes' rule is often written in the following form:
\begin{equation*}
\Pr(A|B) = \frac{\Pr(B|A) \cdot \Pr(A)}{\Pr(B)}
\end{equation*}
where \(\Pr(A)\) is the probability of the `event' \(A\), and \(\Pr(A|B)\) is the probability of the event \(A\) given that the event \(B\) occurred; and \emph{vice versa} swapping \(A\) and \(B\). Unfortunately, this notation obscures that there is in general no unique assignment of probabilities to events: different observers can hold different beliefs. Moreover, we are usually less interested in the probability of particular events than in the process of assigning probabilities to arbitrarily chosen beliefs; and what should be done if \(\Pr(B) = 0\) for some \(B\)? The aim in this section is to make this expression suficiently precise for our purposes.

The assignment of probabilities or beliefs to events is formally the task of a \textbf{state} on the space from which the events are drawn; we should think of states as generalizing distributions or measures. We can write \(\Pr_\pi(A)\) to denote the probability of \(A\) \emph{according to the state} \(\pi\). Similarly, we can write \(\Pr_c(B|A)\) to denote the probability of \(B\) given \(A\) according to the \textbf{channel} \(c\), where the channel \(c\) takes events \(A\) as inputs and emits states \(c(A)\) as outputs. This means that we can alternatively write \(\Pr_c(B|A) = \Pr_{c(A)}(B)\). In general, whenever we encounter a `conditional probability distribution', it is formally a stochastic channel.

If the input events are drawn from the space \(X\) and the output states encode beliefs about \(Y\), then the channel \(c\) is of type \(X \klto Y\), written \(c : X \klto Y\). Given a channel \(c : X \klto Y\) and a channel \(d : Y \klto Z\), we can compose them sequentially by marginalizing (averaging) over the possible outcomes in \(Y\), giving a composite channel \(d \klcirc c : X \klto Z\). We will see precisely how this works in various settings below.

Given two spaces \(X\) and \(Y\) of events, we can form beliefs about them jointly, represented by states on the product space denoted \(X \otimes Y\). The numerator in Bayes' rule represents such a joint state, by the law of conditional probability or `product rule':
\begin{equation} \label{eq:pr-cond-prob}
\Pr_\omega(A, B) = \Pr_c(B|A) \cdot \Pr_\pi(A)
\end{equation}
where \(\cdot\) is multiplication of probabilities, \(\pi\) is a state on \(X\), and \(\omega\) denotes the joint state on \(X \otimes Y\). By composing \(c\) and \(\pi\) to form a state \(c \klcirc \pi\) on \(Y\), we can write
\begin{equation*}
\Pr_{\omega'}(B, A) = \Pr_{c^\dag_\pi}(A|B) \cdot \Pr_{c \klcirc \pi}(B)
\end{equation*}
where \(c^\dag_\pi\) will denote the Bayesian inversion of \(c\) with respect to \(\pi\).

Joint states in classical probability theory are symmetric, meaning that there is an isomorphism \(\mathsf{swap} : X \otimes Y \xklto{\sim} Y \otimes X\). Consequently, we have \(\omega' = \mathsf{swap} \klcirc \omega\) and \(\Pr_\omega(A, B) = \Pr_{\omega'}(B, A)\), and thus
\begin{equation} \label{eq:pr-disintegrations}
\Pr_c(B|A) \cdot \Pr_\pi(A) = \Pr_{c^\dag_\pi}(A|B) \cdot \Pr_{c \klcirc \pi}(B)
\end{equation}
where both left- and right-hand sides are called \emph{disintegrations} of \(\omega\) \citep{Cho2017Disintegration}. From this equality, we can write down the usual form of Bayes' theorem, now with the sources of belief indicated:
\begin{equation} \label{eq:pr-bayes}
\Pr_{c^\dag_\pi}(A|B) = \frac{\Pr_c(B|A) \cdot \Pr_\pi(A)}{\Pr_{c \klcirc \pi}(B)} \, .
\end{equation}
As long as \(\Pr_{c \klcirc \pi}(B) \neq 0\), this equality defines the inverse channel \(c^\dag_\pi\). If the division is undefined, or if we cannot guarantee \(\Pr_{c \klcirc \pi}(B) \neq 0\), then \(c^\dag_\pi\) can be any channel satisfying \eqref{eq:pr-disintegrations}.

There is therefore generally no unique Bayesian inversion \(c^\dag : Y \klto X\) for a given channel \(c : X \klto Y\): rather, we have an inverse \(c^\dag_\pi : Y \klto X\) for each prior state \(\pi\) on \(X\); moreover, \(c^\dag_\pi\) is not a ``posterior distribution'' (as written in some literature), but a channel which emits a posterior distribution, given an observation in \(Y\). By allowing \(\pi\) to vary, we obtain a map of the form \(\bdag{c} : \Pow X \to \cat{C}(Y, X)\), where \(\Pow X\) denotes a space of states on \(X\). This is the form described in \secref{sec:intro}, and is the key to the present paper.

\begin{rmk}
There are two easily confused pieces of terminology here. We will call \(c^\dag_\pi := \bdag{c}(\pi)\) the \textbf{Bayesian inversion} of the channel \(c\) with respect to \(\pi\). Then, given some \(y \in Y\), \(c^\dag_\pi (y)\) is a new `posterior' distribution on X. We will call \(c^\dag_\pi(y)\) the \textbf{Bayesian update} of \(\pi\) along \(c\) given \(y\).
\end{rmk}

\subsubsection{Discrete probability}
\label{sec:org6d28d09}
\label{sec:fin-prob}

Interpreting the informal Bayes' rule \eqref{eq:pr-bayes} is simplest in the case of discrete or \emph{finitely-supported} probability, where events are just elements of sets, and a probability distribution is just an assignment of probabilities to these elements such that that sum of all the assignments is \(1\). This situation is formalized by the \emph{finitely-supported distribution monad} \(\Dst : \Set \to \Set\), and in this setting our category of stochastic channels will be its Kleisli category \(\Kl(\Dst)\). Instead of giving a rigorous presentation of this category, we refer the reader to \citet{Fritz2019synthetic,Cho2017Disintegration}, giving alternatively a self-contained introduction of the structures relevant for our purposes.

The functor \(\Dst : \Set \to \Set\) acts on a set \(X\) by returning the set \(\Dst X\) of finite probability distributions over \(X\): that is, the set of functions \(p : X \to [0, 1]\) such that \(p(x) \neq 0\) for only finitely many elements \(x \in X\), and \(\sum_{x:X} p(x) = 1\). We can think of \(\Dst X\) as a (convex) vector space, with basis vectors \(\ket{x}\) given by the elements \(x\) of \(X\). We can then write a (finitely-supported) distribution \(p : X \to [0,1]\) as a convex (weighted) sum of these basis vectors \(\sum_{x:X} \,\boxed{\,p(x)} \, \ket{x}\), where the expression inside the box evaluates to a probability.

\paragraph{Channels in \(\Kl(\Dst)\): stochastic matrices}
\label{sec:org9d2070b}

The objects of \(\Kl(\Dst)\) are sets, and morphisms \(X \klto Y\) are functions \(X \to \Dst Y\); equivalently, using the Cartesian-closed structure of \(\Set\), they are functions \(X \times Y \to [0, 1]\), \emph{i.e.} (left stochastic) matrices of size \(|X| \times |Y|\), each of whose columns sums to \(1\). We think of morphisms as \emph{stochastic channels} emitting outputs stochastically for each input, with the stochasticity encoded in the output states. We adopt `conditional probability' notation: given \(p : X \klto Y\), \(x \in X\) and \(y \in Y\), we write \(p(y|x) := p(x)(y) \in [0, 1]\) for ``the probabilty of \(y\) \emph{given} \(x\), according to \(p\)''.

Identity morphisms \(\id_X : X \klto X\) in \(\Kl(\Dst)\) take points to `Dirac distributions': \(\id_X := x \mapsto 1 \ket{x}\); these are the unit maps \(\eta_X\) of the monad structure on \(\Dst\). Note that any function \(f : Y \to X\) can be made into a (deterministic) channel \(\langle f \rangle = \eta_X \circ f : Y \to \Dst X\) by post-composition with \(\eta_X\).

Given \(p : X \to \Dst Y\) and \(q : Y \to \Dst Z\), we write their (sequential) composite as \(q \klcirc p : X \to \Dst Z\), constructed by `averaging over' or `marginalizing out' \(Y\) via the Chapman-Kolmogorov equation:
\begin{equation*}
q \klcirc p : X \to \Dst Z := x \mapsto \sum_{z : Z} \, \boxed{\sum_{y : Y} q(z|y) \cdot p(y|x)} \, \ket{z} .
\end{equation*}
Note that this is just the (matrix) product of the stochastic matrices corresponding to the channels \(q\) and \(p\). Abstractly, the composite is formed as the composite \(q^\rhd \circ p\) in \(\Set\) of \(p\) followed by the \emph{Kleisli extension} \(q^\rhd : \Dst Y \to \Dst Z\) of \(q\). Kleisli extension \((-)^\rhd\) turns any morphism \(q : Y \to \Dst Z\) into a morphism \(q^\rhd : \Dst Y \to \Dst Z\), and in \(\Kl(\Dst)\), it is defined using marginalization as follows:
\begin{equation} \label{eq:klext-finite}
q^\rhd : \Dst Y \to \Dst Z := \rho \mapsto \sum_{z:Z} \, \boxed{\sum_{y:Y} q(z|y) \cdot \rho(y)} \, \ket{z} .
\end{equation}

\paragraph{Monoidal structure: joint states and parallel channels}
\label{sec:org228c9b1}

\(\Dst\) is a \emph{monoidal monad}, meaning that there is a family of maps \(\Dst X \times \Dst Y \to \Dst(X \times Y)\), natural in \(X\) and \(Y\), which take a pair of distributions \((\rho, \sigma)\) in \(\Dst X \times \Dst Y\) to the joint distribution on \(X \times Y\) given by \((x, y) \mapsto \rho(x) \cdot \sigma(y)\); \(\rho\) and \(\sigma\) are then the (independent) marginals of this joint distribution. This structure makes \(\Kl(\Dst)\) into a monoidal category, with a tensor product functor \(\otimes : \Kl(\Dst) \times \Kl(\Dst) \to \Kl(\Dst)\). This functor is defined on pairs of objects \(X\) and \(Y\) as their product \(X \otimes Y = X \times Y\), and on stochastic maps \(f : X \to \Dst A\) and \(g : Y \to \Dst B\) as the `parallel composite' \(f \otimes g : X \times Y \to \Dst(A \times B)\) via the monoidal structure of \(\Dst\):
\begin{equation*}
X \times Y \xto{f \times g} \Dst A \times \Dst B \rightarrow \Dst(A \times B) \, .
\end{equation*}
Note that because not all joint states have independent marginals, the monoidal product \(\otimes\) is not Cartesian: that is, given an arbitrary \(\omega : \Dst (X \otimes Y)\), we do not have \(\omega \cong (\rho, \sigma)\) for some \(\rho : \Dst X\) and \(\sigma : \Dst Y\).

As indicated in \secref{sec:comp-prob}, \(\Kl(\Dst)\) is \emph{symmetric} monoidal: since \(X \times Y \cong Y \times X\), there are natural `swap' isomorphisms \(\mathsf{swap}_{X,Y} : X \otimes Y \xklto{\sim} Y \otimes X\) and \(\mathsf{swap}_{Y,X} : Y \otimes X \xklto{\sim} X \otimes Y\) such that \(\mathsf{swap}_{Y,X} \klcirc \mathsf{swap}_{X,Y} = \id_{X \otimes Y}\).

The tensor product \(\otimes\) is equipped with a \emph{unit} object \(I\), which means that, for all objects \(X\), there are natural isomorphisms \(\lambda_X : I \otimes X \xklto{\sim} X\) and \(\rho_X : X \otimes I \xklto{\sim} X\) called the \emph{left and right unitors}. Note that, when \(\Set\) is equipped with the Cartesian product \(\times\), the monoidal unit \(I\) is the singleton set \(1 = \{\ast\}\), and we have \(1 \times X \overset{\lambda}{\cong} X \overset{\rho}{\cong} 1 \times X\). Since \(\otimes\) on \(\Kl(\Dst)\) derives from \(\times\) on \(\Set\), we also have \(I = 1\) in \(\Kl(\Dst)\). Finally, note that states \(\pi : \Dst X\) correspond isomorphically to functions \(\pi : 1 \to \Dst X\), and hence channels \(\pi : I \klto X\).

\paragraph{Marginalization: discarding, causality, and projections}
\label{sec:org6938bf9}

Given a joint distribution \(\omega : 1 \to \Dst(X \times Y)\), we can recover each marginal \(\omega_1 : 1 \to \Dst X\) or \(\omega_2 : 1 \to \Dst Y\) by marginalizing out the other. Categorically, this is captured by the existence of \emph{discarding} maps \(\ground_X : X \to \Dst 1 \cong 1 := x \mapsto 1 \ket{\ast}\). From the discarding maps, we can construct \emph{projection} maps for the tensor product; these witness marginalization:
\[ \pi_1 : X \times Y \to \Dst X := X \times Y \xto{\id \times \ground} \Dst X \times 1 \cong \Dst X \]
and
\[ \pi_2 : X \times Y \to \Dst Y := X \times Y \xto{\ground \times \id} 1 \times \Dst Y \cong \Dst Y \, , \]
which are natural in that \(\pi_i \bullet (f_1 \otimes f_2) = f_i \bullet \pi_i\). Explicitly, using the definitions of \(\id\) and \(\ground\) given above, we have \(\pi_1(x, y) = 1 \ket{x}\) and \(\pi_2(x, y) = 1 \ket{y}\); and so, given some joint distribution \(\omega : 1 \to \Dst(X \times Y)\), \(\omega_1 = \pi_1 \bullet \omega = \sum_{y:Y} \boxed{\omega(x, y)} \ket{x}\), and similarly, \(\omega_2 = \pi_2 \bullet \omega = \sum_{x:X} \boxed{\omega(x, y)} \ket{y}\).

We say that a stochastic map \(f\) is \emph{causal} if doing \(f\) then throwing away the output is the same as just throwing away the input: \(\ground \klcirc f = \ground\); this means that \(f\) cannot affect states `in its past'. In \(\Kl(\Dst)\), every map is causal (the discarding maps are natural), but this will not be true in all the categories of interest to us in this paper.

\paragraph{Copying}
\label{sec:org2c5f76b}
\label{sec:finite-copying}

In order to define lens composition, we need one more piece of structure: a family of copying maps, denoted \(\bcopier\). In \(\Kl(\Dst)\), these are the maps \(\bcopier_X : X \to \Dst(X \times X) := x \mapsto 1 \ket{x, x}\). Together with the discarding maps \(\ground_X\), they make every object \(X\) into a commutative comonoid; this will be elaborated further in \secref{sec:graph-calc}. Note that the copying maps are not natural in \(\Kl(\Dst)\): in general, \(\bcopier \bullet f \neq f \otimes f \bullet \bcopier\). Those maps \(f\) that do satisfy this equality are \emph{comonoid homomorphisms}, and in \(\Kl(\Dst)\) correspond to the deterministic maps (\emph{i.e.} those that emit Dirac delta distributions).

\paragraph{Bayesian updating}
\label{sec:org0fed4e8}

We can now instantiate Bayesian updating in \(\Kl(\Dst)\). Given a channel \(p : X \to \Dst Y\) and a prior \(\rho : 1 \to \Dst X\), the Bayesian update of \(\rho\) along \(p\) is given by the function
\begin{equation} \label{eq:finite-bayes}
\bdag{p} : \Dst X \times Y \to \Dst X := \rho \times y \mapsto \sum_{x : X} \, \boxed{\frac{p(y|x) \cdot \rho(x)}{\sum_{x':X} p(y|x') \cdot \rho(x')}} \, \ket{x} = \sum_{x : X} \, \boxed{\frac{p(y|x) \cdot \rho(x)}{[p \klcirc \rho](y)}} \, \ket{x} \, .
\end{equation}
The expression on the right-hand side is easily seen to correspond to the informal expression of Bayes' rule in equation \eqref{eq:pr-bayes}.

\subsubsection{Graphical calculus}
\label{sec:orgea1a4e3}
\label{sec:graph-calc}

We now move from \(\Kl(\Dst)\) to a more general setting. We will assume that, in each category \(\cat{C}\) of stochastic channels of interest to us, we are able to form parallel channels and coherently copy and delete states, analogously to the discrete case in \secref{sec:fin-prob}. This means that \(\cat{C}\) must be a \emph{copy-delete category} \citep{Cho2017Disintegration}.

\begin{defn}[{\textcite[Def. 2.2]{Cho2017Disintegration}}] \label{def:cd-cat}
A \textbf{copy-delete category} is a symmetric monoidal category \((\cat{C}, \otimes, I)\) in which every object \(X\) is supplied with a commutative comonoid structure \((\bcopier_X, \ground_X)\) compatible with the monoidal structure of \((\otimes, I)\). An \textbf{affine} copy-delete category, or \textbf{Markov category} \citep{Fritz2019synthetic}, is a copy-delete category in which every channel \(c\) is causal in the sense that \(\ground \klcirc c = \ground\). Equivalently, a Markov category is a copy-delete category in which the monoidal unit \(I\) is the terminal object.
\end{defn}

Symmetric monoidal categories, and (co)monoids within them, admit a formal graphical calculus that substantially simplifies many calculations involving complex morphisms: proofs of many equalities reduce to visual demonstrations of isotopy, and structural morphisms such as the symmetry of the monoidal product acquire intuitive topological depictions. We make substantial use of this calculus below, and summarize its features here. For more details, see \(\text{\textcite[{\S}2]{Cho2017Disintegration} or \textcite[{\S}2]{Fritz2019synthetic}}\) or the references cited therein.

\paragraph{Basic structure}
\label{sec:orge0de2c3}

Diagrams in the graphical calculus represent morphisms. We draw morphisms as boxes on strings, labelling the strings with the corresponding objects in the category. Identity morphisms are drawn as plain strings. Sequential composition is represented by connecting strings together; and parallel composition \(\otimes\) by placing diagrams adjacent to one another.

Diagrams for \(\cat{C}\) will be read vertically, with information flowing upwards (from bottom to top). This way, \(c : X \klto Y\), \(\id_X : X \klto X\), \(d \klcirc c : X \xklto{c} Y \xklto{d} Z\), and \(f \otimes g : X \otimes Y \klto A \otimes B\) are depicted respectively as:
\[
\hspace{0.125\linewidth} \tikzfig{img/channel-c}
\hspace{0.125\linewidth} \tikzfig{img/channel-idX}
\hspace{0.125\linewidth} \tikzfig{img/channel-dc}
\hspace{0.125\linewidth} \tikzfig{img/channel-f_g}
\hspace{0.125\linewidth}
\]
We represent (the identity morphism on) the monoidal unit \(I\) as an empty diagram: that is, we leave it implicit in the graphical representation.

\paragraph{States and effects}
\label{sec:orga2b7153}

In \(\Kl(\Dst)\) we saw that a channel \(I \klto X\) was a finitely supported distribution over \(X\). In general, we will call such a morphism \(I \klto X\) a \emph{state} of \(X\). Dually, a morphism \(X \klto I\) in \(\cat{C}\) is called an \emph{effect}. States \(\sigma : I \klto X\) and effects \(\eta : X \klto I\) will be represented as follows:
\begin{gather*}
\scalebox{0.85}{\tikzfig{img/state-sigma}}
\hspace{0.125\linewidth}
\scalebox{0.85}{\tikzfig{img/effect-eta}}
\end{gather*}

\paragraph{Discarding, causality, marginalization and projections}
\label{sec:orgefcceac}

As noted in \secref{sec:fin-prob}, in \(\Kl(\Dst)\) there is only one possible effect of each type \(X\), given by the discarding map \(\ground_X : X \klto I\). This uniqueness follows categorically from the fact that the object \(I = 1\) is the terminal object in \(\Kl(\Dst)\) --- meaning that there is a unique map from every object into \(I\) --- and is equivalent to the condition that every channel \(c : X \klto Y\) is causal:
\[
\tikzfig{img/causality-condition}
\]

From the discarding maps, we constructed projections in \(\Kl(\Dst)\) witnessing the marginalization of joint states. This has a pleasing graphical representation. Suppose a joint state \(\omega : I \klto X \otimes Y\) has marginals \(\omega_1 : I \klto X\) and \(\omega_2 : I \klto Y\). Then
\[
\tikzfig{img/marginalization-X}
\hspace{0.06\linewidth}
\text{and}
\hspace{0.06\linewidth}
\tikzfig{img/marginalization-Y}
\, .
\]

\paragraph{Copying}
\label{sec:orgf9847c6}

The copying maps \(\bcopier_X : X \klto X \otimes X\) have a similarly intuitive graphical representation. They are required to interact nicely with the discarding maps, making each object \(X\) into a comonoid (satisfying unitality and associativity):
\begin{equation} \label{eq:comonoid-law}
\tikzfig{img/copy-delete-identity}
\hspace{0.06\linewidth}
\text{and}
\hspace{0.06\linewidth}
\tikzfig{img/copy-copy-identity}
\end{equation}
A category with such comonoid structure \((\bcopier_X, \ground_X)\) for every object \(X\) is said to \emph{supply comonoids} \citep{Fong2019Supplying}.

We will draw the \(\mathsf{swap}\) isomorphisms of the symmetric monoidal structure as the swapping of wires, and assume that the copying maps commute with this swapping, making the comonoids into \emph{commutative} comonoids:
\begin{equation} \label{eq:comonoid-commute}
\tikzfig{img/swap-swap-identity}
\hspace{0.06\linewidth}
\text{and}
\hspace{0.06\linewidth}
\tikzfig{img/copy-swap-identity}
\end{equation}

\paragraph{Conditional probability}
\label{sec:orgfccf9c7}

We end this summary with a graphical statement of the law of conditional probability \eqref{eq:pr-cond-prob}. Suppose as before that \(A \subseteq X\) and \(B \subseteq Y\), with \(\omega : 1 \klto X \otimes Y\), \(c : X \klto Y\), and \(\pi : 1 \klto X\). The disintegration \(\Pr_\omega(A, B) = \Pr_c(B|A) \cdot \Pr_\pi(A)\) then takes the graphical form
\[
\tikzfig{img/disintegration-c-pi}
\]
with the marginals \(\pi\) and \(c \klcirc \pi\) of \(\omega\) given by
\[
\tikzfig{img/disintegration-marginal-X}
\hspace{0.06\linewidth}
\text{and}
\hspace{0.06\linewidth}
\tikzfig{img/disintegration-marginal-Y} \, .
\]

\subsubsection{Abstract Bayesian inversion}
\label{sec:orgf7fad60}
\label{sec:abstract-bayes}

Bayesian inversion informally satisfies the equation \(\Pr_c(B|A) \cdot \Pr_\pi(A) = \Pr_{c^\dag_\pi}(A|B) \cdot \Pr_{c \klcirc \pi}(B)\) \eqref{eq:pr-disintegrations}. Given the structures introduced above, we can formalize this rule, depicting it as the following graphical equality \parencite[eq. 5]{Cho2017Disintegration}:
\begin{equation} \label{eq:bayes-abstr}
\tikzfig{img/joint-c-pi} \quad = \quad \tikzfig{img/joint-cdag-c-pi}
\end{equation}
This diagram can be interpreted as follows. Given a prior \(\pi : I \klto X\) and a channel \(c : X \klto Y\), we form the joint distribution \(\omega := (\id_X \otimes \, c) \klcirc \bcopier_X \klcirc \pi : I \klto X \otimes Y\) shown on the left hand side: this is the product rule form, \(\Pr_\omega(A, B) = \Pr_c(B | A) \cdot \Pr_\pi(A)\), and \(\pi\) is the corresponding \(X\text{-marginal}\). As in the concrete case of \(\Kl(\Dst)\), we seek an inverse channel \(Y \klto X\) witnessing the `dual' form of the rule, \(\Pr_\omega(A, B) = \Pr(A | B) \cdot \Pr(B)\); this is depicted on the right hand side. By discarding \(X\), we see that \(c \klcirc \pi : I \klto Y\) is the \(Y\text{-marginal}\) witnessing \(\Pr(B)\). So any channel \(c^\dag_\pi : Y \klto X\) witnessing \(\Pr(A | B)\) and satisfying the equality above is a Bayesian inverse of \(c\) with respect to \(\pi\).

\begin{defn} \label{def:admit-bayes}
We say that a channel \(c : X \klto Y\) \textbf{admits Bayesian inversion} with respect to \(\pi : I \klto X\) if there exists a channel \(c^\dag_\pi : Y \klto X\) satisfying equation \eqref{eq:bayes-abstr}. We say that \(c\) admits Bayesian inversion \emph{tout court} if \(c\) admits Bayesian inversion with respect to all states \(\pi : I \klto X\) such that \(c \klcirc \pi\) has non-empty support.
\end{defn}

\subsubsection{Density functions}
\label{sec:org1a38b55}
\label{sec:density-functions}

Abstract Bayesian inversion \eqref{eq:bayes-abstr} generalizes the product rule form of Bayes' theorem \eqref{eq:pr-disintegrations} but in most applications, we are interested in a specific channel witnessing \(\Pr(A | B) = \Pr(B | A) \cdot \Pr(A) / \Pr(B)\). In the common setting of continuous spaces, this is often written informally as
\begin{equation} \label{eq:bayes-density-informal}
p(x|y) 
= \frac{p(y|x) \cdot p(x)}{p(y)}
= \frac{p(y|x) \cdot p(x)}{\int_{x':X} p(y|x') \cdot p(x') \, \d x'}
\end{equation}
but the formal semantics of such an expression are not trivial: for instance, what is the object \(p(y|x)\), and how does it relate to a channel \(c : X \klto Y\)? Moreover, it is not generally true that, given a channel \(c : X \klto Y\) and prior \(\pi : I \klto X\), a Bayesian inversion \(c^\dag_\pi : Y \klto X\) necessarily exists \citep{Stoyanov2014Counterexamples}!

We can interpret \(p(y|x)\) as a \emph{density function} for a channel: an effect \(X \otimes Y \klto I\) in our ambient category \(\cat{C}\). Consequently, \(\cat{C}\) cannot be semicartesian (\emph{i.e.}, \(\cat{C}\) cannot be an affine copy-delete category)---as this would trivialize all density functions---though it must still supply comonoids. We can think of this as expanding the collection of channels in the category to include acausal or `partial' maps and unnormalized distributions or states. An example of such a category is \(\Kl(\Dst_{\leq 1})\), whose objects are sets (as for \(\Kl(\Dst)\)), and whose morphisms \(X \klto Y\) are functions \(X \to \Dst(Y + 1)\), where \(Y + 1\) is the disjoint union of \(Y\) with \(1 = \{\ast\}\). Then a stochastic map is partial if it sends any probability to the added element \(\ast\). The subcategory of `total' (equivalently, causal) maps is \(\Kl(\Dst)\) \citep{Cho2015Introduction}.

\begin{defn}[Density functions] \label{def:density-functions}
A channel \(c : X \klto Y\) is said to be \textbf{represented by an effect} \(p : X \otimes Y \klto I\) with respect to \(\mu : I \klto Y\) if
\[ \tikzfig{img/def-density-function-c}. \]
In this case, we call \(p\) a \textbf{density function} for \(c\).
\end{defn}

We will also need the concepts of almost-equality and almost-invertibility.

\begin{defn}[Almost-equality, almost-invertibility] \label{def:almost-eq}
Given a state \(\pi : I \klto X\), we say that two channels \(c : X \klto Y\) and \(d : X \klto Y\) are \(\mathbf{\pi}\textbf{-almost-equal}\), denoted \(c \overset{\pi}{\sim} d\), if
\[ \tikzfig{img/joint-c-pi} \ \cong\ \tikzfig{img/joint-d-pi} \]
and we say that an effect \(p : X \klto I\) is \(\mathbf{\pi}\textbf{-almost-invertible}\) with \(\mathbf{\pi}\textbf{-almost-inverse } q : X \klto I\) if
\[ \tikzfig{img/almost-invertibility}. \]
\end{defn}

\begin{prop}[Composition preserves almost-equality] \label{prop:comp-preserve-almost-eq}
If \(c \overset{\pi}{\sim} d\), then \(f \klcirc c \overset{\pi}{\sim} f \klcirc d\).
\begin{proof}
Immediate from the definition of almost-equality.
\end{proof}
\end{prop}

\begin{prop}[Almost-inverses are almost-equal] \label{prop:almost-inverse-almost-equal}
Suppose \(q : X \klto I\) and \(r : X \klto I\) are both \(\pi\text{-almost-inverses}\) for the effect \(p : X \klto I\). Then \(q \overset{\pi}{\sim} r\).
\begin{proof}
Deferred to Appendix \secref{sec:proof:almost-inverse-almost-equal}.
\end{proof}
\end{prop}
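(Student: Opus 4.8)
The plan is to work directly with the defining equations of \defref{def:almost-eq}, manipulating the weighted states in the graphical calculus. Recall that for effects $q, r : X \klto I$, the claim $q \overset{\pi}{\sim} r$ unfolds to an equality of states $I \klto X$, each obtained by copying $\pi$ with $\bcopier_X$ and weighting one branch by $q$ (respectively $r$), retaining the other branch as the $X$-output. Almost-invertibility of $p$ with inverse $q$ likewise says that copying $\pi$ and weighting by the product effect $(p \otimes q) \klcirc \bcopier_X$ returns $\pi$ unchanged, and symmetrically for $r$. So I have two usable equations of states, and I must turn them into the single desired equation.

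First I would rewrite the $q$-weighted state by substituting, for the copy of $\pi$ it contains, the equal expression furnished by the $r$-almost-inverse: since weighting $\pi$ by the product $(p \otimes r)\klcirc\bcopier_X$ yields $\pi$ again, I may replace $\pi$ inside the $q$-weighted diagram by ``$\pi$ weighted by $p$ and $r$'' without changing the morphism it denotes. This produces a single state in which $\pi$ is copied several times and the three effects $q$, $p$, $r$ are each applied to a copy, with one copy kept as the output wire.

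Next I would invoke the commutative comonoid structure supplied by $\cat{C}$ (\defref{def:cd-cat}): by coassociativity and cocommutativity of $\bcopier_X$, the order in which $q$, $p$, $r$ act on the copies is immaterial, so I may reorder them to $r$, $p$, $q$. Read the other way round, the resulting diagram is exactly the $r$-weighted state with its copy of $\pi$ replaced by ``$\pi$ weighted by $p$ and $q$''. Applying almost-invertibility of $p$ with inverse $q$ now collapses that sub-state back to $\pi$, leaving the plain $r$-weighted state. Comparing endpoints gives precisely $q \overset{\pi}{\sim} r$.

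The only genuine content lies in the middle step, where cocommutativity of the copier is what lets the two almost-invertibility equations communicate; the rest is substitution of equals for equals. The point to take care over is that this substitution is legitimate — one is replacing a sub-state by a genuinely equal sub-state inside a larger diagram — and that the several copies of $\pi$ are tracked correctly through the comonoid laws. I expect no analytic subtlety, precisely because almost-invertibility already absorbs the support and division issues that would otherwise accompany the informal reasoning ``$q = p^{-1} = r$''.
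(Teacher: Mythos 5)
Your proposal is correct and follows essentially the same route as the paper's own proof: substitute one almost-invertibility equation for $\pi$ inside the $q$-weighted state, reorganise the copies of $\pi$ using the comonoid laws for $\bcopier$, and then collapse using the other almost-invertibility equation together with unitality. The only cosmetic difference is that you invoke cocommutativity explicitly where the paper gets by with coassociativity and unitality alone, but the substance of the argument is identical.
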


With these notions, we can characterise Bayesian inversion via density functions.

\begin{prop}[Bayesian inversion via density functions; \citet{Cho2017Disintegration}] \label{prop:bayes-density-graph}
Suppose \(c : X \klto Y\) is represented by the effect \(p\) with respect to \(\mu\). The Bayesian inverse \(c^\dag_\pi : Y \klto X\) of \(c\) with respect to \(\pi : I \klto X\) is given by
\[ \tikzfig{img/channel-density-cdag-pi} \]
where \(p^{-1} : Y \klto I\) is a \(\mu\text{-almost-inverse}\) for the effect
\[ \tikzfig{img/likelihood-p-pi} \]
\begin{proof}
Deferred to Appendix \secref{sec:proof:bayes-density-graph}.
\end{proof}
\end{prop}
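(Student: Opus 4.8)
The plan is to verify that the proposed channel $c^\dag_\pi$ satisfies the defining equation \eqref{eq:bayes-abstr} of a Bayesian inverse, working entirely in the graphical calculus and isolating the single analytic subtlety inside the notion of $\mu$-almost-equality. Rather than manipulate the left-hand joint state of \eqref{eq:bayes-abstr}, I would substitute the stated formula for $c^\dag_\pi$ into the \emph{right}-hand side and reduce it to the left-hand side.

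First I would record a preliminary identity: the $Y$-marginal $c \klcirc \pi$ appearing on the right of \eqref{eq:bayes-abstr} is itself a density channel, namely $\mu$ reweighted by the very effect for which $p^{-1}$ is declared to be a $\mu$-almost-inverse. This follows by expanding $c$ through its density representation (Definition~\ref{def:density-functions}) and discarding the $X$-output, using the comonoid (copy/delete) laws \eqref{eq:comonoid-law} to absorb the redundant copy. Thus the evidence effect weighting the output wire in the right-hand joint is precisely the one inverted by $p^{-1}$.

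Next I would expand both $c^\dag_\pi$ (by its stated form, built from $\pi$, $p$, and $p^{-1}$) and the marginal $c \klcirc \pi$ inside the right-hand joint, and then use coassociativity and cocommutativity of copying (\eqref{eq:comonoid-law} and \eqref{eq:comonoid-commute}) to merge the nested copies of the base state $\mu$ into a single comonoid fan-out. At this point the diagram exhibits, on one $\mu$-distributed copy of the $Y$ wire, the evidence effect and its $\mu$-almost-inverse $p^{-1}$ side by side. The crucial move is to apply the almost-invertibility identity of Definition~\ref{def:almost-eq}, which replaces this pair by a bare discard and collapses the two surplus copies of the $Y$ wire. Because the two effects act on sibling copies of the same $\mu$-weighted wire, the discrepancy between $p^{-1}$ and a genuine inverse is confined to a $\mu$-null set and the two joint states are almost-equal ($\iso$); this is exactly the sense in which the equation is to be read.

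After this cancellation the surviving diagram consists of $\pi$ copied into its two outputs, the density $p$ applied to one copy together with a $\mu$-distributed $Y$ wire, and the second $\mu$-copy delivered as the $Y$-output. Reading the density representation of $c$ (Definition~\ref{def:density-functions}) backwards then identifies this with the left-hand joint $(\id_X \otimes c) \klcirc \bcopier_X \klcirc \pi$ of \eqref{eq:bayes-abstr}, completing the verification. The main obstacle I anticipate is the bookkeeping of the copies of $\mu$, so that the evidence effect produced by the marginal and the almost-inverse $p^{-1}$ end up on sibling copies of a single $Y$ wire; once they are aligned, the almost-invertibility identity of Definition~\ref{def:almost-eq} applies cleanly and the remaining rearrangement is routine diagram isotopy.
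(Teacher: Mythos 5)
Your proposal matches the paper's own argument: the appendix proof likewise substitutes the density representations of \(c\) and \(c^\dag_\pi\) into the right-hand side of \eqref{eq:bayes-abstr}, rearranges via the (co)associativity and unitality of \(\bcopier\) so that the evidence effect and \(p^{-1}\) meet on sibling copies of the \(\mu\)-weighted wire, cancels them by the almost-invertibility identity, and reads back the density representation of \(c\) to recover the left-hand joint. Your preliminary observation that \(c \klcirc \pi\) is \(\mu\) reweighted by the inverted effect is exactly the implicit first step of that chain, so the approach is essentially identical.
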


The following proposition is an immediate consequence of the definition of almost-equality and of the abstract characterisation of Bayesian inversion \eqref{eq:bayes-abstr}. We omit the proof.
\begin{prop}[Bayesian inverses are almost-equal] \label{prop:bayes-almost-equal}
Suppose \(\alpha : Y \klto X\) and \(\beta : Y \klto X\) are both Bayesian inversions of the channel \(c : X \klto Y\) with respect to \(\pi : I \klto X\). Then \(\alpha \overset{c \klcirc \pi}{\sim} \beta\).
\end{prop}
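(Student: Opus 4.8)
The plan is to read the result straight off the two defining equations and close it with transitivity. By Definition~\ref{def:admit-bayes}, to say that $\alpha$ and $\beta$ are both Bayesian inversions of $c$ with respect to $\pi$ is precisely to say that each satisfies the abstract characterisation \eqref{eq:bayes-abstr}. Writing $\omega := (\id_X \otimes c) \klcirc \bcopier_X \klcirc \pi$ for the joint state on the left-hand side of that equation---which depends only on $c$ and $\pi$, not on the choice of inverse---I would record the two instances of \eqref{eq:bayes-abstr} side by side, one for $\alpha$ and one for $\beta$. Both are equalities of morphisms $I \klto X \otimes Y$ with the \emph{same} left-hand side $\omega$, so transitivity of equality immediately yields that their two right-hand sides agree.

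Next I would identify each right-hand side with the joint-distribution diagram from Definition~\ref{def:almost-eq}. For a Bayesian inverse $\gamma : Y \klto X$, the right-hand side of \eqref{eq:bayes-abstr} is the reversed disintegration $(\gamma \otimes \id_Y) \klcirc \bcopier_Y \klcirc (c \klcirc \pi)$, obtained by copying the $Y$-marginal $c \klcirc \pi$ and sending one copy through $\gamma$. This is exactly the joint state entering the definition of $(c \klcirc \pi)$-almost-equality for $\gamma$, save that the tensor factors appear in the opposite order: the almost-equality diagram for $\gamma$ is a state $(\id_Y \otimes \gamma) \klcirc \bcopier_Y \klcirc (c \klcirc \pi)$ on $Y \otimes X$, whereas \eqref{eq:bayes-abstr} is phrased on $X \otimes Y$. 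The two are interchanged by the symmetry $\mathsf{swap}_{Y,X}$, using commutativity of the copy map with swap \eqref{eq:comonoid-commute}.

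Finally I would discharge the bookkeeping: since $\mathsf{swap}$ is an isomorphism, the equality of the two right-hand sides (as states on $X \otimes Y$) holds if and only if the corresponding almost-equality joints (as states on $Y \otimes X$) are equal, and the latter is by definition the assertion $\alpha \overset{c \klcirc \pi}{\sim} \beta$. The only point requiring any care---and the reason the author can call the proposition immediate---is this matching of tensor orderings against Definition~\ref{def:almost-eq}; everything else is substitution into \eqref{eq:bayes-abstr} together with transitivity. I therefore expect no genuine obstacle, only the need to keep the swap explicit so that the comparison is literal rather than merely morally correct.
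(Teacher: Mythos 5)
Your argument is correct and is precisely the ``immediate consequence'' the paper has in mind: the paper omits the proof entirely, remarking only that it follows from Definition~\ref{def:almost-eq} and equation~\eqref{eq:bayes-abstr}, and your writeup (two instances of \eqref{eq:bayes-abstr} with a common left-hand side, transitivity, then matching against the almost-equality diagram via $\mathsf{swap}$ and \eqref{eq:comonoid-commute}) is the natural way to spell that out. No gaps.
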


We will also need the following two technical results about almost-equality.

\begin{lemma} \label{lemma:eff-chan-almost-eq}
Suppose the channels \(\alpha\) and \(\beta\) satisfy the following relations for some \(f,q,r\):
\[
\tikzfig{img/eff-chan-almost-eq-1}
\hspace{0.06\linewidth}
\text{and}
\hspace{0.06\linewidth}
\tikzfig{img/eff-chan-almost-eq-2}
\]
Suppose \(q \overset{\mu}{\sim} r\). Then \(\alpha \overset{\mu}{\sim} \beta\).
\begin{proof}
Deferred to Appendix \secref{sec:proof:eff-chan-almost-eq}.
\end{proof}
\end{lemma}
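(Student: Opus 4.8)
The plan is to prove the target almost-equality $\alpha \overset{\mu}{\sim} \beta$ by unfolding it through \defref{def:almost-eq} and reducing it to the hypothesised almost-equality $q \overset{\mu}{\sim} r$. By definition, $\alpha \overset{\mu}{\sim} \beta$ asserts the equality of the two joint states $(\id \otimes \alpha) \klcirc \bcopier \klcirc \mu$ and $(\id \otimes \beta) \klcirc \bcopier \klcirc \mu$, so the first step is to substitute the two given relations for $\alpha$ and $\beta$ into these expressions. Since those relations build $\alpha$ and $\beta$ from the same data $f$ and differ only in whether the effect $q$ or the effect $r$ is plugged into one port, the two resulting diagrams are identical everywhere except at that single occurrence.

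The heart of the argument is then to expose the $q$/$r$ difference as a literal copy of the joint state occurring in the definition of $q \overset{\mu}{\sim} r$. I would use the comonoid laws to reorganise the copying: coassociativity \eqref{eq:comonoid-law} lets me split the wire carrying $\mu$ so that one branch feeds the effect $q$ (respectively $r$) while the other is routed through the common structure --- a further copy followed by $f$ and the retained output leg --- and cocommutativity \eqref{eq:comonoid-commute} puts the effect-branch into the position demanded by the hypothesis. After this rearrangement each joint state factors as a common context $(\id \otimes f) \klcirc \bcopier \klcirc (-)$ applied to $(\id \otimes q) \klcirc \bcopier \klcirc \mu$ (respectively to its $r$-analogue). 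These two arguments are exactly the states equated by $q \overset{\mu}{\sim} r$, so replacing one by the other is licensed by the hypothesis; reversing the rearrangement then turns the $\alpha$-joint state into the $\beta$-joint state, which is the required equality.

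The main obstacle is the bookkeeping in this middle step. The almost-equality of $q$ and $r$ is formulated with $\mu$ copied into just two branches (the effect and one surviving wire), whereas in the joint state for $\alpha$ the $\mu$-wire must additionally feed the map $f$ and the retained output leg. The content of the lemma is precisely that this extra processing forms a context common to both sides and to which the equality $q \overset{\mu}{\sim} r$ can be post-composed; making this rigorous amounts to checking, via coassociativity and cocommutativity alone, that the copy structure really does factor as claimed. Once that factorisation is in hand the conclusion is immediate, so I expect essentially all of the work to lie in this diagrammatic reorganisation rather than in any genuinely new idea.
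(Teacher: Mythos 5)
Your proposal is correct and follows essentially the same route as the paper's own proof: substitute the defining relations for $\alpha$ and $\beta$ into the joint states of Definition~\ref{def:almost-eq}, use coassociativity (and commutativity) of $\bcopier$ to factor out a common context exposing the joint state $(\id \otimes q) \klcirc \bcopier \klcirc \mu$, apply the hypothesis $q \overset{\mu}{\sim} r$, and reassemble. You also correctly locate the only real work in the diagrammatic bookkeeping with the comonoid laws, which is exactly where the paper's appendix proof spends its effort.
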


\begin{lemma} \label{lemma:eff-blocks-almost-eq}
If the channel \(d\) is represented by an effect with respect to the state \(\nu\), and if \(f \overset{\nu}{\sim} g\), then \(f \overset{d \klcirc \rho}{\sim} g\) for any state \(\rho\) on the domain of \(d\).
\begin{proof}
Deferred to Appendix \secref{sec:proof:eff-blocks-almost-eq}.
\end{proof}
\end{lemma}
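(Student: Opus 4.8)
The plan is to exploit that the density representation of $d$ with respect to $\nu$ exhibits $d \klcirc \rho$ as a \emph{re-weighting} of $\nu$, and then to transport the hypothesised $\nu$-almost-equality of $f$ and $g$ across this re-weighting by a single post-composition. Intuitively, the density representation says that $d \klcirc \rho$ is ``absolutely continuous'' with respect to $\nu$, so anything holding $\nu$-almost-everywhere also holds $(d \klcirc \rho)$-almost-everywhere; the formalisation is just comonoid bookkeeping.

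First I would unpack the definitions. Writing $d : A \klto B$, $\nu : I \klto B$, and $f, g : B \klto C$, the density representation (Definition \ref{def:density-functions}) supplies an effect $q : A \otimes B \klto I$ for which $d$ is obtained by weighting its output copy of $\nu$ by $q$. Pre-composing with the prior $\rho : I \klto A$ collapses $q$ and $\rho$ into a single effect $e := q \klcirc (\rho \otimes \id_B) : B \klto I$, and unfolding the representation shows $d \klcirc \rho = (\id_B \otimes e) \klcirc \bcopier_B \klcirc \nu$; that is, $d \klcirc \rho$ is $\nu$ weighted on a copy of itself by $e$. No copy of the $A$-strand is needed here, since the input enters only through the scalar weight $q$.

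Next I would compute the joint state that defines $(d \klcirc \rho)$-almost-equality. Substituting the unfolded form of $d \klcirc \rho$ and applying comonoid coassociativity, the joint $(\id_B \otimes f) \klcirc \bcopier_B \klcirc (d \klcirc \rho)$ becomes a triple copy of $\nu$ with $e$ on one strand, $f$ on a second, and the third left as the $B$-marginal. Crucially, this coincides with what one gets by post-composing the $\nu$-joint $(\id_B \otimes f) \klcirc \bcopier_B \klcirc \nu$ with the fixed weighting map $W := (\id_B \otimes e \otimes \id_C) \klcirc (\bcopier_B \otimes \id_C) : B \otimes C \klto B \otimes C$, which copies the $B$-output and feeds one copy into $e$: the copy manufactured by $W$ and the separate strand feeding $e$ in the density form agree as copies of the same $\nu$ by coassociativity and cocommutativity. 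Hence $(\id_B \otimes f) \klcirc \bcopier_B \klcirc (d \klcirc \rho) = W \klcirc \bigl[(\id_B \otimes f) \klcirc \bcopier_B \klcirc \nu\bigr]$, and identically with $g$ in place of $f$.

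Finally, the hypothesis $f \overset{\nu}{\sim} g$ is precisely the equality of the two $\nu$-joints $(\id_B \otimes f) \klcirc \bcopier_B \klcirc \nu$ and $(\id_B \otimes g) \klcirc \bcopier_B \klcirc \nu$; substituting this equality under the single map $W$ (post-composition preserves equality of joints, in the spirit of Proposition \ref{prop:comp-preserve-almost-eq}) yields equality of the two $(d \klcirc \rho)$-joints, which is exactly $f \overset{d \klcirc \rho}{\sim} g$. The main obstacle is the middle step: verifying the graphical identity that rewrites the $(d \klcirc \rho)$-joint as $W$ applied to the $\nu$-joint. This is where the density representation is genuinely used---without it, $d \klcirc \rho$ would be an arbitrary state on $B$ and the implication would fail---and where one must be careful that the extra $\nu$-strand absorbed into $W$ really does fuse with the others via the comonoid laws.
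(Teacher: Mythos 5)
Your proposal is correct and follows essentially the same route as the paper's proof: unfold the density representation of $d$ so that $d \klcirc \rho$ appears as $\nu$ weighted by the effect $e = q \klcirc (\rho \otimes \id)$, use coassociativity and cocommutativity of $\bcopier$ to expose the $\nu$-joint of $f$ (resp.\ $g$) inside the $(d \klcirc \rho)$-joint, substitute using $f \overset{\nu}{\sim} g$, and refold. Packaging the surrounding diagram as an explicit post-composed map $W$ is a cosmetic reorganisation of the same comonoid bookkeeping the paper performs in its chain of isomorphisms.
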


\subsubsection{S-finite kernels}
\label{sec:orge9cae7b}
\label{sec:sfKrn}

To represent channels by concrete effects (\emph{i.e.}, density functions), we work in the category  \(\Cat{sfKrn}\) of measurable spaces and s-finite kernels. Once again, we only sketch the structure of this category, and refer the reader to \citet{Cho2017Disintegration,Staton2017Commutative} for elaboration.

Objects in \(\Cat{sfKrn}\) are measurable spaces \((X, \Sigma_X)\); often we will just write \(X\), and leave the \(\sigma\text{-algebra } \Sigma_X\) implicit. Morphisms \((X, \Sigma_X) \klto (Y, \Sigma_Y)\) are s-finite kernels. A \emph{kernel} \(k\) from \(X\) to \(Y\) is a function \(k : X \times \Sigma_Y \to [0, \infty]\) satisfying the following conditions:
\begin{itemize}
\item for all \(x \in X\), \(k(x, -) : \Sigma_Y \to [0, \infty]\) is a measure; and
\item for all \(B \in \Sigma_Y\), \(k(-, B) : X \to [0, \infty]\) is measurable.
\end{itemize}
A kernel \(k : X \times \Sigma_Y \to [0, \infty]\) is \emph{finite} if there exists some \(r \in [0, \infty)\) such that, for all \(x \in X\), \(k(x, Y) \leq r\). And \(k\) is \emph{s-finite} if it is the sum of at most countably many finite kernels \(k_n\), \(k = \sum_{n : \nn} k_n\).

Identity morphisms \(\id_X : X \klto X\) are Dirac kernels \(\delta_X : X \times \Sigma_X \to [0, \infty] := x \times A \mapsto 1\) iff \(x \in A\) and 0 otherwise. Composition is given by a Chapman-Kolmogorov equation, analogously to composition in \(\Kl(\Dst)\). Suppose \(c : X \klto Y\) and \(d : Y \klto Z\). Then
\[
d \klcirc c : X \times \Sigma_Z \to [0, \infty]
:= x \times C \mapsto \int_{y:Y} d(C|y) \, c(\d y| x)
\]
where we have again used the `conditional probability' notation \(d(C|y) := d \circ (y \times C)\). Reading \(d(C|y)\) from left to right, we can think of this notation as akin to reading the string diagrams from top to bottom, \emph{i.e.} from output(s) to input(s).

\paragraph{Monoidal structure on \(\Cat{sfKrn}\)}
\label{sec:orga8d0b6b}

There is a monoidal structure on \(\Cat{sfKrn}\) analogous to that on \(\Kl(\Dst)\). On objects, \(X \otimes Y\) is the Cartesian product \(X \times Y\) of measurable spaces. On morphisms, \(f \otimes g : X \otimes Y \klto A \otimes B\) is given by
\[
f \otimes g : (X \times Y) \times \Sigma_{A \times B}
:= (x \times y) \times E \mapsto \int_{a:A} \int_{b:B} \delta_{A \otimes B}(E|x, y) \, f(\d a|x) \, g(\d b|y)
\]
where, as above, \(\delta_{A \otimes B}(E|a, b) = 1\) iff \((a, b) \in E\) and 0 otherwise. Note that \((f \otimes g)(E|x, y) = (g \otimes f)(E|y, x)\) for all s-finite kernels (and all \(E\), \(x\) and \(y\)), by the Fubini-Tonelli theorem for s-finite measures \citep{Cho2017Disintegration,Staton2017Commutative}, and so \(\otimes\) is symmetric on \(\Cat{sfKrn}\).

The monoidal unit in \(\Cat{sfKrn}\) is again \(I = 1\), the singleton set. Unlike in \(\Kl(\Dst)\), however, we do have nontrivial effects \(p : X \klto I\), given by kernels \(p : (X \times \Sigma_1) \cong X \to [0, \infty]\), with which we will represent density functions.

\paragraph{Comonoids in \(\Cat{sfKrn}\)}
\label{sec:org1743d45}

\(\Cat{sfKrn}\) also supplies comonoids, again analogous to those in \(\Kl(\Dst)\). Discarding is given by the family of effects \(\ground_X : X \to [0, \infty] := x \mapsto 1\), and copying is again Dirac-like: \(\bcopier_X : X \times \Sigma_{X \times X} := x \times E \mapsto 1\) iff \((x, x) \in E\) and 0 otherwise. Because we have nontrivial effects, discarding is only natural for causal or `total' channels: if \(c\) satisfies \(\ground \klcirc c = \ground\), then \(c(-|x)\) is a probability measure for all \(x\) in the domain\footnote{This means that \(\Kl(\Giry)\) is the subcategory of total maps in \(\Cat{sfKrn}\), where \(\Giry\) is the \emph{Giry monad} taking each measurable space \(X\) to the space \(\Giry X\) of measures over \(X\).}. And, once again, copying is natural (that is, \(\bcopier \klcirc c = (c \otimes c) \klcirc \bcopier\)) iff the channel is deterministic.

\paragraph{Channels represented by effects}
\label{sec:org01f097a}

We can interpret the string diagrams of \secref{sec:graph-calc} in \(\Cat{sfKrn}\), and we will do so by following the intuition of the conditional probability notation and reading the string diagrams from outputs to inputs. Hence, if \(c : X \klto Y\) is represented by the effect \(p : X \otimes Y \klto I\) with respect to the measure \(\mu : I \klto Y\), then
\[
c : X \times \Sigma_Y \to [0, \infty]
:= x \times B \mapsto \int_{y:B} \mu(\d y) \, p(y | x) .
\]
Note that we also use conditional probability notation for density functions, and so \(p(y|x) := p \circ (x \times y)\).

Suppose that \(c : X \klto Y\) is indeed represented by \(p\) with respect to \(\mu\), and that \(d : Y \klto Z\) is represented by \(q : Y \otimes Z \klto I\) with respect to \(\nu : I \klto Z\). Then in \(\Cat{sfKrn}\), \(d \klcirc c : X \klto Z\) is given by
\[
d \klcirc c : X \times \Sigma_Z 
:= x \times C \mapsto \int_{z:C} \nu(\d z) \, \int_{y:Y} q(z|y) \, \mu(\d y) \, p(y|x)
\]
Alternatively, by defining the effect \((p \mu q) : X \otimes Z \klto I\) as
\[
(p \mu q) : X \times Z \to [0, \infty]
:= x \times z \mapsto \int_{y:Y} q(z|y) \, \mu(\d y) \, p(y|x),
\]
we can write \(d \klcirc c\) as
\[
d \klcirc c : X \times \Sigma_Z
:= x \times C \mapsto \int_{z:C} \nu(\d z) \, (p \mu q)(z|x) .
\]

\paragraph{Bayesian inversion via density functions}
\label{sec:org66802f4}

Once again writing \(\pi : I \klto X\) for a prior on X, and interpreting the string diagram of Proposition \ref{prop:bayes-density-graph} for \(c^\dag_\pi : Y \klto X\) in \(\Cat{sfKrn}\), we have
\begin{equation} \label{eq:bayes-density-krn}
\begin{aligned}
c^\dag_\pi : Y \times \Sigma_X \to [0, \infty]
:= y \times A & \mapsto \left( \int_{x:A} \pi(\d x) \, p(y|x) \right) p^{-1}(y) \\
&= p^{-1}(y) \int_{x:A} p(y|x) \, \pi(\d x) ,
\end{aligned}
\end{equation}
where \(p^{-1} : Y \klto I\) is a \(\mu\text{-almost-inverse}\) for effect \(p \klcirc (\pi \otimes \id_Y)\), and is given up to \(\mu\text{-almost-equality}\) by
\[
p^{-1} : Y \to [0, \infty]
:= y \mapsto \left( \int_{x:X} p(y|x) \, \pi(\d x) \right)^{-1} \, .
\]
Note that from this we recover the informal form of Bayes' rule for measurable spaces \eqref{eq:bayes-density-informal}. Suppose \(\pi\) is itself represented by a density function \(p_\pi\) with respect to the Lebesgue measure \(\d x\). Then
\[
c^\dag_\pi (A|y) = \int_{x:A} \, \frac{p(y|x) \, p_\pi(x)}{\int_{x':X} \, p(y|x') \, p_\pi(x') \, \d x'} \, \d x.
\]

\subsection{Optics}
\label{sec:org7668931}
\label{sec:optics}

In \secref{sec:comp-prob} we noted that, given a channel \(c : X \klto Y\), its Bayesian inversion is of the form \(\bdag{c} : \cat{C}(I, X) \to \cat{C}(Y, X)\), where \(\cat{C}(I, X)\) is a space of states on \(X\). This is not a map in \(\Kl(\Dst)\), for instance, because there is in general no space \(Z\) such that \(\Kl(\Dst)(Y, X) \cong \Dst Z\); and nor do we obtain a map in \(\Kl(\Dst)\) if we attempt to `uncurry' \(\bdag{c}\) into the form \(\Dst X \otimes Y \to \Dst X\) \footnote{Not only is \(\Kl(\Dst)\) not categorically closed, but \(\bdag{c}\) is not linear in the prior: the Bayesian inversion of \(c\) with respect to \(0.5\pi + 0.5\rho\) is not \(0.5 c^\dag_\pi + 0.5 c^\dag_\rho\); such linearity characterizes maps in \(\Kl(\Dst)\). Alternatively, \(c^\dag\) is not generally a morphism in \(\Cat{sfKrn}\), because there may be some prior \(\pi\) such that \((c \klcirc \pi)(y) = 0\), which would make the required almost-inverse undefined, so that \(c^\dag\) is not the sum of at most countably many finite kernels.}. So, unlike in the case of Cartesian lenses, our forwards and backwards morphisms do not live in the same category, yet somehow they still interact and behave similarly: we need \emph{mixed optics}.

\emph{Mixed} or \emph{profunctor optics} \citep{Roman2020Profunctor,Clarke2020Profunctor,Riley2018Categories} allow the forwards and backwards morphisms of bidirectional transformations such as lenses to live in arbitrary (possibly different) categories \(\cat{C}\) and \(\cat{D}\), with interaction mediated by an arbitrary third category \(\cat{M}\) of `residuals'. The objects of \(\cat{M}\) can be somehow tensored with the objects of \(\cat{C}\) and \(\cat{D}\), giving new \(\cat{C}\) and \(\cat{D}\) objects that behave like the original objects plus ``some other stuff''; through this tensoring, we say that \(\cat{M}\) \emph{acts} on \(\cat{C}\) and \(\cat{D}\), and \(\cat{C}\) and \(\cat{D}\) are \(\cat{M}\text{-actegories}\). For example, recall that the \(\mathsf{view}\) map of a Cartesian lens takes a structure and returns a part of it; the residual (the ``other stuff'') in this case is just the rest of the record, and \(\Set\) is acting on itself.

Henceforth, rather than work in the setting of locally small categories enriched in \(\Set\), we will work in the somewhat more general setting of enrichment in an arbitrary cocomplete Cartesian closed category \(\Cat{V}\). We write \(\Cat{V\mdash Cat}\) for the category of \(\Cat{V}\text{-enriched}\) categories, so that \(\Cat{V\mdash Cat}(\cat{C}, \cat{D})\) is the \(\Cat{V}\text{-category}\) of \(\Cat{V}\text{-functors}\) between \(\Cat{V}\text{-categories}\). Since \(\Cat{V}\) is assumed to be Cartesian, we write \(\times\) for the categorical product both in \(\Cat{V}\) and the induced product in \(\Cat{V\mdash Cat}\).

\begin{defn}[$\cat{M}\text{-actegory}$]
Suppose \(\cat{M}\) is a monoidal category with tensor \(\otimes\) and unit object \(I\). We say that \(\cat{C}\) is an \(\cat{M}\text{\textbf{-actegory}}\) when \(\cat{C}\) is equipped with a functor \(\odot : \cat{M} \to \Cat{V\mdash Cat}(\cat{C}, \cat{C})\) called the \textbf{action} along with natural unitor and associator isomorphisms \(\lambda^\odot_X : I \odot X \xto{\sim} X\) and \(a^\odot_{M,N,X} : (M \otimes N) \odot X \xto{\sim} M \odot (N \odot X)\) compatible with the monoidal structure of \((\cat{M}, \otimes, I)\).
\end{defn}

\begin{defn}[Mixed optics \citep{Clarke2020Profunctor}] \label{def:optics}
Suppose \((\cat{C}, \circL)\) and \((\cat{D}, \circR)\) are two \(\cat{M}\text{-actegories}\). Let \(X, Y : \cat{C}\) and \(A, B : \cat{D}\). An \textbf{optic} from \((X, A)\) to \((Y, B)\), written \((X, A) \lensto (Y, B)\), is an element of the following object in \(\Cat{V}\):
\begin{equation} \label{eq:optics}
\Cat{Optic}_{\circL, \circR}\Big( (X, A), (Y, B) \Big) = \int^{M \, : \, \cat{M}} \cat{C}(X, M \circL Y) \times \cat{D}(M \circR B, A)
\end{equation}
\end{defn}

The `integral' here is not an integral but a \emph{coend}: a kind of generalized sum or existential quantifier; see
\textcite[Example 5.4]{Loregian2015This} or \textcite[Chapter 4]{Fong2018Seven}
for some background to this intuition. The coend ranges over objects \(M : \cat{M}\), binding pairs of morphisms \(X \to M \circL Y\) in \(\cat{C}\) and \(M \circR B \to A\) in \(\cat{D}\) into equivalence classes along the residuals \(M\). Let \(v : \cat{C}(X, M \circL Y)\), \(u : \cat{D}(N \circR B, A)\). Then, for any \(f : \cat{M}(M, N)\), we have two pairs of morphisms
\[
\optar{v}{u \circ (f \circR \id_{B})} \; := \;
\big( v, \; u \circ (f \circR \id_{B}) \big)
\; : \; \cat{C}(X, M \circL Y) \times \cat{D}(M \circR B, A)
\]
and
\[
\optar{(f \circL \id_Y) \circ v}{u} \; := \;
\big( (f \circL \id_Y) \circ v, \; u \big)
\; : \; \cat{C}(X, N \circL Y) \times \cat{D}(N \circR B, A) \, .
\]
We give a recap of the definition of coend in \secref{sec:coends}. In brief, the coend equivalence relation says precisely that two such pairs are equivalent, and so we call \(\optar{v}{u \circ (f \circR \id_{B})}\) and \(\optar{(f \circL \id_Y) \circ v}{u}\) \emph{representatives} of their equivalence class. We adopt the notation \(\optar{l}{r}\) to indicate the element of the coend (\emph{i.e.}, the equivalence class) represented by the pair \((l, r)\).

Apart from providing a unified compositional framework for describing bidirectional transformations, optics admit an intuitive graphical calculus \citep{Boisseau2020String,Roman2020Open}. A general optic \(\optar{l}{r} : (X, A) \lensto (Y, B)\) is depicted\footnote{For these diagrams we adopt the graphical calculus of \citet{Boisseau2020String} of the bicategory of Tambara modules, which are presheaves of optics. 0-cells are actegories, depicted as planar regions. 1-cells are Tambara modules, depicted as edges of regions (\emph{i.e.}, strings). 2-cells are natural transformations, depicted as vertices on edges (\emph{i.e.}, boxes on strings). For our purposes, these 2-cells will always be morphisms in an underlying actegory, lifted by the Yoneda embedding. The graphical calculus described by \citet{Roman2020Open} is more flexible, representing the monoidal bicategory of pointed profunctors without the extra Tambara module structure, but here we follow \citet{Boisseau2020String} for simplicity.} as
\[
\scalebox{0.75}{\tikzfig{img/lens-general-coend-0}}
\]
where the top region of the diagram represents \(\cat{C}\), the middle region \(\cat{M}\), and the bottom region \(\cat{D}\). Information flows from left to right in the top region, and right to left in the bottom, and \(\cat{M}\) mediates interaction between \(\cat{C}\) and \(\cat{D}\). We can depict the equivalent representatives \(\optar{v}{u \circ (f \circR \id_{B})} \sim \optar{(f \circL \id_Y) \circ v}{u}\) accordingly as
\[
\scalebox{0.75}{\tikzfig{img/lens-general-coend-1}}
\; \sim \;
\scalebox{0.75}{\tikzfig{img/lens-general-coend-2}}
\]
which indicates that two pairs of morphisms are equivalent under the coend when there is some \(f\) that can `slide between' residual types.

As these diagrams suggest, optics for \(\circL\) and \(\circR\) form a category: composition is by pasting of diagrams, and identities are plain wires.

\begin{prop}[Category of optics {\parencite[3.1.1]{Roman2020Profunctor}}]
Given \(\cat{M}\text{-actegories, } (\cat{C}, \circL)\) and \((\cat{D}, \circR)\), there is a \textbf{category of optics} \(\Cat{Optic}_{\circL, \circR}\) whose objects are pairs of objects \((X, A) : (\cat{C} \times \cat{D})_0\) and whose morphisms \((X, A) \lensto (Y, B)\) are elements of \(\Cat{Optic}_{\circL, \circR}\Big( (X, A), (Y, B) \Big)\) as defined in \eqref{eq:optics}. The (representative of) the composition of two optics is as depicted in the following diagram. Let \(\optar{v}{u} : (X, A) \lensto (Y, B)\) and \(\optar{l}{r} : (Y, B) \lensto (Z, C)\). Then \(\optar{l}{r} \lenscirc \optar{v}{u} : (X, A) \lensto (Z, C) \; \cong\)
\begin{equation} \label{eq:optic-composition}
\begin{aligned}
&\scalebox{0.75}{\tikzfig{img/lens-general-lrvu-1}}
 \cong \scalebox{0.75}{\tikzfig{img/lens-general-lrvu-2}} \\
&\cong \scalebox{0.75}{\tikzfig{img/lens-general-lrvu-0}}
 \cong \optar{{a^{\circL}_{M,N,Y}}^{-1} \circ \left( \id_M \circL \, l \right) \circ v}{u \circ (\id_M \circR \, r) \circ a^{\circR}_{M,N,Y}} \, .
\end{aligned}
\end{equation}
Identity optics \(\id_{(X, A)} : (X, A) \lensto (X, A)\) are given by the unitors of the actegory structures: \(\id_{(X, A)} = \optar{{\lambda^{\circL}_X}^{-1}}{\lambda^{\circR}_A}\), depicted as plain wires in an otherwise empty box:
\[
\scalebox{0.75}{\tikzfig{img/lens-cartesian-id-2}}
\]
\end{prop}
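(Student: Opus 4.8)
The plan is to verify the three category axioms in turn: that the composition formula \eqref{eq:optic-composition} is well-defined on coend equivalence classes, that it is associative, and that the claimed identities satisfy the unit laws. Since composition is specified on \emph{representatives}, the essential point is its well-definedness; associativity and unitality are then coherence bookkeeping that the string-diagram calculus for optics largely absorbs into isotopy.

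First I would establish well-definedness. Fix $\optar{l}{r} : (Y,B) \lensto (Z,C)$ with $l : Y \to N \circL Z$ and $r : N \circR C \to B$, and suppose the left factor is presented by two equivalent representatives, \emph{i.e.} there is $f : M \to M'$ in $\cat{M}$ with $v' = (f \circL \id_Y) \circ v$ and $u = u' \circ (f \circR \id_B)$. The two resulting composites carry residuals $M \otimes N$ and $M' \otimes N$, and I claim they are identified in the coend by the sliding morphism $f \otimes \id_N$. Writing the old forwards map as $F = (a^{\circL}_{M,N,Z})^{-1} \circ (\id_M \circL l) \circ v$ and the new backwards map as $B' = u' \circ (\id_{M'} \circR r) \circ a^{\circR}_{M',N,C}$, the coend relation applied to $f \otimes \id_N$ reduces the claim to the two identities $((f \otimes \id_N) \circL \id_Z) \circ F = (a^{\circL}_{M',N,Z})^{-1} \circ (\id_{M'} \circL l) \circ v'$ and $B' \circ ((f \otimes \id_N) \circR \id_C) = u \circ (\id_M \circR r) \circ a^{\circR}_{M,N,C}$. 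Each of these follows by pushing $f$ through the actegory associator via naturality of $a^{\circL}$ (resp. $a^{\circR}$) in its first argument, and then commuting $f$ past $l$ (resp. $r$) via naturality of the action morphism $f \circL (-)$ (resp. $f \circR (-)$), which is just bifunctoriality of the action. The symmetric case, in which the right factor is replaced by an equivalent representative through some $g : N \to N'$, is handled identically using the sliding morphism $\id_M \otimes g$.

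Next I would treat associativity. Given a third optic $\optar{s}{t} : (Z,C) \lensto (W,D)$, both bracketings of the triple composite yield an optic whose residual is $(M \otimes N) \otimes P$ on one side and $M \otimes (N \otimes P)$ on the other; these are connected in the coend by the associator $\alpha_{M,N,P}$ of $\cat{M}$ playing the role of the sliding morphism. Equality of the two forwards maps then reduces to the pentagon coherence for the actegory associator $a^{\circL}$ together with its naturality, and dually for $a^{\circR}$ on the backwards maps. Graphically the two pasting orders differ only by an isotopy of the $\cat{M}$-region boundaries, so this step is visually immediate in the optic calculus.

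Finally, for the unit laws I would compose an arbitrary optic $\optar{v}{u}$ with $\id_{(X,A)} = \optar{{\lambda^{\circL}_X}^{-1}}{\lambda^{\circR}_A}$ on each side. The residual becomes $I \otimes M$ (or $M \otimes I$), which the monoidal unitor of $\cat{M}$ identifies with $M$; under this identification the triangle coherence relating $\lambda^{\circL}$ (resp. $\lambda^{\circR}$) to $a^{\circL}$ (resp. $a^{\circR}$) collapses the unitor and associator factors, returning $\optar{v}{u}$ unchanged. I expect the main obstacle to be precisely the bookkeeping in the well-definedness step: one must track how the single sliding morphism $f$ interacts with the forwards and backwards associators simultaneously, and confirm that naturality plus bifunctoriality are exactly the coherences available. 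Once that is in hand, associativity and unitality follow from the standard actegory coherence axioms, as they do for mixed optics in general \parencite[3.1.1]{Roman2020Profunctor}.
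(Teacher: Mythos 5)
Your proposal is correct and is the standard argument: the paper itself offers no proof of this proposition, deferring to the cited reference \parencite[3.1.1]{Roman2020Profunctor} and to the remark that ``composition is by pasting of diagrams,'' and your verification (well-definedness via the sliding morphisms $f \otimes \id_N$ and $\id_M \otimes g$ using naturality of the actegory associators and bifunctoriality of the actions, then associativity and unitality from the actegory coherence axioms) is precisely the detail that citation absorbs. No gaps.
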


\subsubsection{Lenses}
\label{sec:org4f98ee1}

A Cartesian lens as introduced in \secref{sec:intro} is a pair of functions \(X \to Y\) and \(X \times B \to A\); that is, an element of the product \(\Set(X, Y) \times \Set(X \times B, A)\). We can write this in optical form:
\begin{align}
\Set(X, Y) \times \Set(X \times B, A)
& \cong \int^{M \, : \, \Set} \Set(X, Y) \times \Set(X, M) \times \Set(M \times B, A) \label{eq:cartlens1} \\
& \cong \int^{M \, : \, \Set} \Set(X, M \times Y) \times \Set(M \times B, A) \label{eq:cartlens2} \\
& \cong \Cat{Optic}_{\times, \times}\Big( (X, A), (Y, B) \Big) \nonumber
\end{align}
where the first isomorphism obtains by Yoneda reduction \eqref{eq:coyoneda} and the second by the universal property of the categorical product \(\times : \Set \to \Cat{Cat}(\Set, \Set)\).

The universal property of the Cartesian product that justifies \eqref{eq:cartlens2} \(\xto{\sim}\) \eqref{eq:cartlens1} entails that \(\Set\) supplies comonoids and every morphism in \(\Set\) is a comonoid homomorphism: \emph{i.e.}, \(\copier \circ f = f \otimes f \circ \copier\), where \(\copier : x \mapsto (x, x)\) is the diagonal copier in \(\Set\). When either of the \(\cat{M}\text{-actegories}\) underlying a category of optics is equivalent to \(\cat{M}\) itself, we can lift string diagrams in that actegory directly into the string diagrams for those optics \parencite[Note 3.7]{Boisseau2020String}. In particular, this includes the depictions of comonoids introduced in \secref{sec:graph-calc}. We can thus depict any Cartesian lens as
\begin{equation} \label{eq:lens-optic}
\scalebox{0.75}{\tikzfig{img/lens-cartesian-vu}}
\end{equation}
where \(v\) is called \(\mathsf{view}\) and \(u\) is called \(\mathsf{update}\). We can define a general lens to be any optic that is isomorphic to such a depiction.
\begin{defn}[After {\textcite[{\S}3.1]{Clarke2020Profunctor}}] \label{def:lens}
A \textbf{lens} is any optic that can be depicted as in \eqref{eq:lens-optic}. Equivalently, suppose \((\cat{C}, \otimes)\) is a symmetric monoidal category and write \(\Cat{Comon}(\cat{C})\) for its subcategory of comonoids and comonoid homomorphisms. \(\otimes\) lifts to \(\Cat{Comon}(\cat{C})\) and induces a corresponding \(\Cat{Comon}(\cat{C})\text{-actegory}\) structure on \(\Cat{Comon}(\cat{C})\). Suppose also that \((\cat{D}, \circR)\) is any \(\Cat{Comon}(\cat{C})\text{-actegory}\). Then a lens is any optic in \(\Cat{Optic}_{(\otimes, \circR)}\). Note that
\begin{align*}
\Cat{Optic}_{\otimes, \circR}\Big( (X, A), (Y, B) \Big)
& \cong \int^{M \, : \, \Cat{Comon}(\cat{C})} \Cat{Comon}(\cat{C})(X, M \otimes Y) \times \cat{D}(M \circR B, A) \\
& \cong \int^{M \, : \, \Cat{Comon}(\cat{C})} \Cat{Comon}(\cat{C})(X, Y) \times \Cat{Comon}(\cat{C})(X, M) \times \cat{D}(M \circR B, A) \\
& \cong \Cat{Comon}(\cat{C})(X, Y) \times \cat{D}(X \circR B, A)
\end{align*}
where the second isomorphism follows because \(\copier \circ f \cong f \otimes f \circ \copier\) for every morphism \(f\) in \(\Cat{Comon}(\cat{C})\) and the third follows by Yoneda reduction \eqref{eq:coyoneda}. Every such optic therefore has a representative as depicted in \eqref{eq:lens-optic}. \qed
\end{defn}

In the sequel, we will see that Bayesian inversions constitute the `backwards' components of a particular category of lenses.

\section{Channels relative to a state}
\label{sec:orgf1ace06}
\label{sec:stat-cat}

The Bayesian inversion of a `forward' channel is defined with respect to a prior state on the domain of the forward channel. Changes in the prior entail changes in the inversions -- but ``changes in the prior'' are just channels in the forwards direction, and the ``changes in the inversions'' correspond to pulling inversions back along corresponding forward channels. Formally, this means that the backward channels are fibred over the forward channels: for each domain in the `base category' of forward channels, we have a category of channels with respect to that domain, and forward channels correspond to contravariant functors between the fibres that implement the aforesaid pulling-back. This is an instance of the Grothendieck construction \citep{nLab2020Grothendieck}, making Bayesian lenses an instance of \emph{Grothendieck lenses} \citep{Spivak2019Generalized}. In this section, we make these ideas precise; in the next, we translate them into the optical vernacular introduced in \secref{sec:optics}.

\begin{defn}[State-indexed categories] \label{def:stat-cat}
Let \((\cat{C}, \otimes, I)\) be a monoidal category enriched in a Cartesian closed category \(\Cat{V}\). Define the \(\cat{C}\text{-state-indexed}\) category \(\Fun{Stat}: \cat{C}\op \to \Cat{V\mdash Cat}\) as follows. 
\begin{align}
\Fun{Stat} \;\; : \;\; \cat{C}\op \; & \to \; \Cat{V\mdash Cat} \nonumber \\
X & \mapsto \Fun{Stat}(X) := \quad \begin{pmatrix*}[l]
& \Fun{Stat}(X)_0 & := \quad \;\;\; \cat{C}_0 \\
& \Fun{Stat}(X)(A, B) & := \quad \;\;\; \Cat{V}(\cat{C}(I, X), \cat{C}(A, B)) \\
\id_A \: : & \Fun{Stat}(x)(A, A) & := \quad 
\left\{ \begin{aligned}
\id_A : & \; \cat{C}(I, X)     \to     \cat{C}(A, A) \\
        & \quad\;\;\: \rho \quad \mapsto \quad \id_A
\end{aligned} \right. \label{eq:stat} \\
\end{pmatrix*} \\ \nonumber \\
f : \cat{C}(Y, X) & \mapsto \begin{pmatrix*}[c]
\Fun{Stat}(f) \; : & \Fun{Stat}(X) & \to & \Fun{Stat}(Y) \vspace*{0.5em} \\
& \Fun{Stat}(X)_0 & = & \Fun{Stat}(Y)_0 \vspace*{0.5em} \\
& \Cat{V}(\cat{C}(I, X), \cat{C}(A, B)) & \to & \Cat{V}(\cat{C}(I, Y), \cat{C}(A, B)) \vspace*{0.125em} \\
& \alpha & \mapsto & f^\ast \alpha : \big( \, \sigma : \cat{C}(I, Y) \, \big) \mapsto \big( \, \alpha(f \klcirc \sigma) : \cat{C}(A, B) \, \big)
\end{pmatrix*} \nonumber
\end{align}
Composition in each fibre \(\Fun{Stat}(X)\) is given by composition in \(\cat{C}\); that is, by the left and right actions of the profunctor \(\Fun{Stat}(X)(-, =) : \cat{C}\op \times \cat{C} \to \Cat{V}\) (\secref{sec:coends} supplies some intuition). Explicitly, given \(\alpha : \Cat{V}(\cat{C}(I, X), \cat{C}(A, B))\) and \(\beta : \Cat{V}(\cat{C}(I, X), \cat{C}(B, C))\), their composite is \(\beta \circ \alpha : \Cat{V}(\cat{C}(I, X), \cat{C}(A, C)) : = \rho \mapsto \beta(\rho) \klcirc \alpha(\rho)\). Since \(\Cat{V}\) is Cartesian, there is a canonical copier \(\copier : x \mapsto (x, x)\) on each object, so we can alternatively write \((\beta \circ \alpha)(\rho) = \big(\beta(-) \klcirc \alpha(-)\big) \circ \copier \circ \rho\). Note that we indicate composition in \(\cat{C}\) by \(\klcirc\) and composition in the fibres \(\Fun{Stat}(X)\) by \(\circ\).
\end{defn}

\begin{ex} \label{ex:stat-meas}
Let \(\Cat{V} = \Cat{Meas}\) be a `convenient' (\emph{i.e.}, Cartesian closed) category of measurable spaces, such as the category of quasi-Borel spaces \citep{Heunen2017Convenient}, let \(\Pow : \Cat{Meas} \to \Cat{Meas}\) be a probability monad defined on this category, and let \(\cat{C} = \Kl(\Pow)\) be the Kleisli category of this monad. Its objects are the objects of \(\Cat{Meas}\), and its hom-spaces \(\Kl(\Pow)(A, B)\) are the spaces \(\Cat{Meas}(A, \Pow B)\) \citep{Fritz2019synthetic}. This \(\cat{C}\) is a monoidal category of stochastic channels, whose monoidal unit \(I\) is the space with a single point. Consequently, states of \(X\) are just measures (distributions) in \(\Pow X\). That is, \(\Kl(\Pow)(I, X) \cong \Cat{Meas}(1, \Pow X)\). Instantiating  \(\Fun{Stat}\) in this setting, we obtain:
\begin{align}
\Fun{Stat} \;\; : \;\; \Kl(\Pow)\op \; & \to \; \Cat{V\mdash Cat} \nonumber \\
X & \mapsto \Fun{Stat}(X) := \quad \begin{pmatrix*}[l]
& \Fun{Stat}(X)_0 & := \quad \;\;\; \Cat{Meas}_0 \\
& \Fun{Stat}(X)(A, B) & := \quad \;\;\; \Cat{Meas}(\Pow X, \Cat{Meas}(A, \Pow B)) \\
\id_A \: : & \Fun{Stat}(X)(A, A) & := \quad
\left\{ \begin{aligned}
\id_A : & \; \Pow X     \to     \Cat{Meas}(A, \Pow A) \\
        & \;\;\; \rho \;\;\, \mapsto \quad \eta_A
\end{aligned} \right. \label{eq:stat-kl-d} \\
\end{pmatrix*} \\ \nonumber \\
c : \Kl(\Pow)(Y, X) & \mapsto \begin{pmatrix*}[c]
\Fun{Stat}(c) \; : & \Fun{Stat}(X) & \to & \Fun{Stat}(Y) \vspace*{0.5em} \\
& \Fun{Stat}(X)_0 & = & \Fun{Stat}(Y)_0 \vspace*{0.5em} \\
& \begin{pmatrix*}[l]
    d^\dag : & \Pow X & \to \Kl(\Pow)(A, B) \\
    & \; \pi & \mapsto \quad \quad d^\dag_\pi
  \end{pmatrix*}
  & \mapsto &
  \begin{pmatrix*}
    c^\ast d^\dag : \Pow Y \to \Kl(\Pow)(A, B) \\
    \rho \quad \mapsto \quad d^\dag_{c \klcirc \rho}
  \end{pmatrix*}
\end{pmatrix*} \nonumber
\end{align}
Each \(\Fun{Stat}(X)\) is a category of stochastic channels with respect to measures on the space \(X\). We can write morphisms \(d^\dag : \Pow X \to \Kl(\Pow)(A, B)\) in \(\Fun{Stat}(X)\) as \(d^\dag_{(\cdot)} : A \xklto{(\cdot)} B\), and think of them  as generalized Bayesian inversions: given a measure \(\pi\) on \(X\), we obtain a channel \(d^\dag_\pi : A \xklto{\pi} B\) with respect to \(\pi\). Given a channel \(c : Y \klto X\) in the base category of priors, we can pull \(d^\dag\) back along \(c\), to obtain a \(Y\text{-dependent}\) channel in \(\Fun{Stat}(Y)\), \(c^\ast d^\dag : \Pow Y \to \Kl(\Pow)(A, B)\), which takes \(\rho : \Pow Y\) to the channel \(d^\dag_{c \klcirc \rho} : A \xklto{c \klcirc \rho} B\) defined by pushing \(\rho\) through \(c\) and then applying \(d^\dag\).
\end{ex}

\begin{rmk}
Note that by taking \(\Cat{Meas}\) to be Cartesian closed, we have \(\Cat{Meas}(\Pow X, \Cat{Meas}(A, \Pow B)) \cong \Cat{Meas}(\Pow X \times A, \Pow B)\) for each \(X\), \(A\) and \(B\), and so a morphism \(c^\dag : \Pow Y \to \Kl(\Pow)(X, Y)\) equivalently has the type \(\Pow Y \times X \to \Pow Y\). Paired with a channel \(c : Y \to \Pow X\), we have something like a Cartesian lens; and to compose such pairs, we can use the Grothendieck construction \citep{nLab2020Grothendieck,Spivak2019Generalized}.

\end{rmk}

\begin{defn}[Grothendieck lenses \citep{Spivak2019Generalized}]
We define the category \(\Cat{GrLens}_F\) of Grothendieck lenses for a (pseudo)functor \(F : \cat{C}\op \to \Cat{V\mdash Cat}\) to be the total category of the Grothendieck construction for the pointwise opposite of \(F\). Explicitly, its objects \((\Cat{GrLens}_F)_0\) are pairs \((C, X)\) of objects \(C\) in \(\cat{C}\) and \(X\) in \(F(C)\), and its hom-sets \(\Cat{GrLens}_F \big( (C, X), (C', X') \big)\) are given by dependent sums
\begin{equation}
\Cat{GrLens}_F \big( (C, X), (C', X') \big) = \sum_{f \, : \, \cat{C}(C, C')} F(C) \big( F(f)(X'), X \big)
\end{equation}
so that a morphism \((C, X) \lensto (C', X')\) is a pair \((f, f^\dag)\) of \(f : \cat{C}(C, C')\) and \(f^\dag : F(C) \big( F(f)(X'), X \big)\). We call such pairs \textbf{Grothendieck lenses} for \(F\) or \(F\mathrm{\textit{-lenses}}\).
\end{defn}

\begin{prop}[$\Cat{GrLens}_F$ is a category]
The identity Grothendieck lens on \((C, X)\) is \(\id_{(C, X)} = (\id_C, \id_X)\). Sequential composition is as follows. Given \((f, f^\dag) : (C, X) \lensto (C', X')\) and \((g, g^\dag) : (C', X') \lensto (D, Y)\), their composite \((g, g^\dag) \lenscirc (f, f^\dag)\) is defined to be the lens \(\big(g \klcirc f, F(f)(g^\dag) \big) : (C, X) \lensto (D, Y)\). Associativity and unitality of composition follow from functoriality of \(F\). \qed
\end{prop}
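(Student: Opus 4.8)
The plan is to recognise \(\Cat{GrLens}_F\) as the total category of the Grothendieck construction applied to the pointwise opposite \(G\) of \(F\) (so \(G(C) = F(C)\op\) and \(G(f) = F(f)\op\)), for which the category axioms are entirely standard; but since this paper is pedagogical I would verify them directly from the stated data. Write \(\gamma_{f,g} : F(f)\,F(g) \Rightarrow F(g \klcirc f)\) for the compositor and \(\iota_C : \id_{F(C)} \Rightarrow F(\id_C)\) for the unitor of the (pseudo)functor \(F\) (both identities when \(F\) is strict). First I would check that the prescribed composite is a well-typed morphism of the claimed type, and then discharge unitality and associativity in turn.

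For well-typedness I would chase types through the fibres. Given \(f^\dag \in F(C)\big(F(f)(X'), X\big)\) and \(g^\dag \in F(C')\big(F(g)(Y), X'\big)\), applying the fibre functor \(F(f) : F(C') \to F(C)\) to \(g^\dag\) yields \(F(f)(g^\dag) : F(f)F(g)(Y) \to F(f)(X')\) in \(F(C)\); re-sourcing along the component \((\gamma_{f,g})_Y\) and post-composing with \(f^\dag\) produces a morphism \(f^\dag \circ F(f)(g^\dag) \circ (\gamma_{f,g})_Y : F(g \klcirc f)(Y) \to X\) in \(F(C)\). Paired with \(g \klcirc f : \cat{C}(C,D)\), this is exactly an element of \(\Cat{GrLens}_F\big((C,X),(D,Y)\big)\); this is the backward datum abbreviated as \(F(f)(g^\dag)\) in the statement, and it confirms that composition lands where claimed.

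For unitality, the forward components are handled by unitality of \(\klcirc\) in \(\cat{C}\). For the backward components I would use that each \(F(f)\) is a functor, so \(F(f)(\id_{X'}) = \id_{F(f)(X')}\) collapses the left unit, while the right unit turns on \(\iota_C\) identifying \(F(\id_C)\) with \(\id_{F(C)}\); the coherence axioms of \(F\) are precisely what make these isomorphisms the required identities. For associativity, the forward components of the two bracketings agree by associativity of \(\klcirc\), and the backward components agree because each \(F(f)\) preserves fibrewise composition and identities, and because the compositors satisfy the associativity pentagon, which guarantees that the two ways of re-sourcing \(F(f)F(g)F(h)(Z)\) to \(F(h \klcirc g \klcirc f)(Z)\) coincide. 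This is the content of the statement's remark that the laws follow from functoriality of \(F\).

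The main obstacle is purely the bookkeeping of the coherence isomorphisms \(\gamma\) and \(\iota\): in the genuinely pseudofunctorial case one must invoke the pentagon and triangle conditions to see that the re-sourcings compose associatively and unitally. I would note, however, that for the functor of interest this obstacle evaporates: \(\Fun{Stat}\) of Definition~\ref{def:stat-cat} is \emph{strict}, since \(\Fun{Stat}(f \klcirc g)(\alpha)(\sigma) = \alpha\big((f \klcirc g) \klcirc \sigma\big) = \big(\Fun{Stat}(g) \circ \Fun{Stat}(f)\big)(\alpha)(\sigma)\) follows immediately from associativity of \(\klcirc\) in \(\cat{C}\), and likewise \(\Fun{Stat}(\id) = \id\). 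For strict \(F\) every compositor and unitor is an identity, and the verification reduces cleanly to the category axioms of \(\cat{C}\) together with those of the fibres \(F(C)\).
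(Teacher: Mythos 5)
Your proposal is correct and matches the paper's (essentially unproved) assertion that the category axioms follow from functoriality of $F$: you simply spell out the standard Grothendieck-construction verification, and you rightly observe both that the backward component of the composite written as $F(f)(g^\dag)$ must be read as the abbreviation for $f^\dag \circ F(f)(g^\dag)$ (consistent with the composite $c^\dag \circ c^\ast d^\dag$ in Example~\ref{ex:stat-lens}) and that $\Fun{Stat}$ is strict, so no coherence bookkeeping is actually needed. Nothing further is required.
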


\begin{ex}[$\Cat{GrLens}_{\Fun{Stat}}$] \label{ex:stat-lens}
Instantiating \(\Cat{GrLens}_F\) with \(F = \Fun{Stat} : \cat{C}\op \to \Cat{V\mdash Cat}\), we obtain the category \(\Cat{GrLens}_\Fun{Stat}\) whose objects are pairs \((X, A)\) of objects of \(\cat{C}\) and whose morphisms \((X, A) \lensto (Y, B)\) are elements of the set
\begin{equation}
\Cat{GrLens}_\Fun{Stat} \big( (X, A), (Y, B) \big) \cong \cat{C}(X, Y) \times \Cat{V} \big( \cat{C}(I, X), \cat{C}(B, A) \big) \, .
\end{equation}
The identity \(\Fun{Stat}\text{-lens}\) on \((Y, A)\) is \((\id_Y, \id_A)\), where by abuse of notation \(\id_A : \cat{C}(I, Y) \to \cat{C}(A, A)\) is the constant map \(\id_A\) defined in \eqref{eq:stat} that takes any state on \(Y\) to the identity on \(A\). The sequential composite of \((c, c^\dag) : (X, A) \lensto (Y, B)\) and \((d, d^\dag) : (Y, B) \lensto (Z, C)\) is the \(\Fun{Stat}\text{-lens } \big( (d \klcirc c), (c^\dag \circ c^\ast d^\dag) \big) : (X, A) \lensto (Z, C)\) with \((d \klcirc c) : \cat{C}(X, Z)\) and where \((c^\dag \circ c^\ast d^\dag) : \Cat{V}\big(\cat{C}(I, X), \cat{C}(C, A)\big)\) takes a state \(\pi : \cat{C}(I, X)\) on \(X\) to the channel \(c^\dag_{\pi} \klcirc \d^\dag_{c \klcirc \pi}\). If we think of the notation \((\cdot)^\dag\) as denoting the operation of forming the Bayesian inverse of a channel (in the case where \(A = X\), \(B = Y\) and \(C = Z\)), then the main result of this paper is to show that \((d \klcirc c)^\dag_\pi \overset{d \klcirc c \klcirc \pi}{\sim} c^\dag_{\pi} \klcirc \d^\dag_{c \klcirc \pi}\), where \(\overset{d \klcirc c \klcirc \pi}{\sim}\) denotes \((d \klcirc c \klcirc \pi)\text{-almost-equality}\) (Definition \ref{def:almost-eq}).
\end{ex}

\section{Bayesian lenses}
\label{sec:org0e1c1c7}
\label{sec:bayes-lens}

We now show how to translate the categories of Grothendieck \(\Fun{Stat}\text{-lenses}\) defined above into the canonical profunctor optic form, thereby opening Bayesian lenses up to comparison and composition with other optics, and representation in the corresponding graphical calculi.

In order to give an optical form for \(\Cat{GrLens}_\Fun{Stat}\), we need to find two \(\cat{M}\text{-actegories}\) with a common category of actions \(\cat{M}\). Let \(\hat{\cat{C}}\) and \(\check{\cat{C}}\) denote the categories \(\hat{\cat{C}} := \Cat{V\mdash Cat}(\cat{C}\op, \Cat{V})\) and \(\check{\cat{C}} := \Cat{V\mdash Cat}(\cat{C}, \Cat{V})\) of presheaves and copresheaves on \(\cat{C}\), and consider the following natural isomorphisms.
\begin{align}
\Cat{GrLens}_\Fun{Stat} \big( (X, A), (Y, B) \big) & \cong \cat{C}(X, Y) \times \Cat{V} \big( \cat{C}(I, X), \cat{C}(B, A) \big) \nonumber \\
& \cong \int^{M \, : \, \cat{C}} \cat{C}(X, Y) \times \cat{C}(X, M) \times \Cat{V}\big(\cat{C}(I, M), \cat{C}(B, A)\big) \nonumber \\
& \cong \int^{\hat{M} \, : \, \hat{\cat{C}}} \cat{C}(X, Y) \times \hat{M}(X) \times \Cat{V}\big(\hat{M}(I), \cat{C}(B, A)\big) \label{eq:stat-lens-coend}
\end{align}
The second isomorphism follows by Yoneda reduction \eqref{eq:coyoneda}, and the third follows by the Yoneda lemma. We take \(\cat{M}\) to be \(\cat{M} := \hat{\cat{C}}\), and define an action \(\odot\) of \(\hat{\cat{C}}\) on \(\check{\cat{C}}\) as follows.
\begin{defn}[$\odot$]
We give only the action on objects; the action on morphisms is analogous.
\begin{equation} \label{eq:L-action}
\begin{aligned}
\odot : \hat{\cat{C}} & \to \Cat{V\mdash Cat}(\check{\cat{C}}, \check{\cat{C}}) \\
\hat{M} & \mapsto
  \begin{pmatrix*}
    \hat{M} \odot - & : & \check{\cat{C}} & \to & \check{\cat{C}} \\
    & & P & \mapsto & \Cat{V}\big( \hat{M}(I), P \big)
  \end{pmatrix*}
\end{aligned}
\end{equation}
Functoriality of \(\odot\) follows from the functoriality of copresheaves. \qed
\end{defn}

To confirm that \(\odot\) makes \(\check{\cat{C}}\) into a \(\hat{\cat{C}}\text{-actegory}\), we need to check the actegory structure isomorphisms.

\begin{prop} \label{prop:odot-actegory}
\(\odot\) equips \(\check{\cat{C}}\) with a \(\hat{\cat{C}}\text{-actegory}\) structure: unitor isomorphisms \(\lambda^{\odot}_F : 1 \odot F \xto{\sim} F\) and associator isomorphisms \(a^{\odot}_{\hat{M}, \hat{N}, F} : (\hat{M} \times \hat{N}) \odot F \xrightarrow{\sim} \hat{M} \odot (\hat{N} \odot F)\) for each \(\hat{M},\hat{N}\) in \(\check{\cat{C}}\), both natural in \(F : \Cat{V\mdash Cat}(\cat{C}, \Cat{V})\).
\begin{proof}
We first check the unitor:
\begin{align*}
\lambda^{\odot}_F : 1 \odot \cat{C}(B, -) 
& = \Cat{V}\big(1(I), \cat{C}(B, -)\big) \\
& \cong \Cat{V}\big(\mathbf{1}, \cat{C}(B, -)\big) \\
& \cong \cat{C}(B, -)
\end{align*}
where $\mathbf{1}$ is the terminal object in $\Cat{V}$.

The associator is given as follows:
\begin{align*}
{a^{\odot}_{\hat{M}, \hat{N}, P}}^{-1}
: \hat{M} \odot \left( \hat{N} \odot P \right)
& = \Cat{V}\left(\hat{M}(I), \Cat{V}\left(\hat{N}(I), P\right)\right) \\
& \cong \Cat{V}\left(\hat{M}(I) \times \hat{N}(I), P\right) \\
& \cong \Cat{V}\left((\hat{M}\times\hat{N})(I), P\right) \\
& = (\hat{M} \times \hat{N}) \odot P
\end{align*}
where the first isomorphism follows by the Cartesian closure of \(\Cat{V}\).
\end{proof}
\end{prop}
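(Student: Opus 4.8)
The plan is to verify the two pieces of structure required of an $\cat{M}$-actegory — a unitor and an associator natural isomorphism — and then to confirm that these satisfy the triangle and pentagon coherence axioms, using throughout that the action $\hat{M} \odot P = \Cat{V}\big(\hat{M}(I), P\big)$ is assembled from the internal hom of the Cartesian closed category $\Cat{V}$ (applied pointwise to the copresheaf $P$), and that the monoidal structure on $\cat{M} = \hat{\cat{C}}$ is the pointwise product $\times$ with unit the terminal presheaf $1$ constant at the terminal object $\mathbf{1}$ of $\Cat{V}$. Equivalently, I am checking that $\odot$ is a strong monoidal functor from $(\hat{\cat{C}}, \times, 1)$ into the category of endofunctors of $\check{\cat{C}}$ under composition.

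First I would construct the unitor. Since the unit presheaf satisfies $1(I) = \mathbf{1}$, we have $1 \odot F = \Cat{V}(\mathbf{1}, F)$, and the canonical isomorphism $\Cat{V}(\mathbf{1}, -) \cong \mathrm{id}_{\Cat{V}}$ — valid because the terminal object is the unit for the internal hom in any Cartesian closed category — supplies $\lambda^\odot_F : 1 \odot F \xto{\sim} F$. Next I would construct the associator using the exponential law of $\Cat{V}$: starting from $\hat{M} \odot (\hat{N} \odot F) = \Cat{V}\big(\hat{M}(I), \Cat{V}(\hat{N}(I), F)\big)$, currying gives $\Cat{V}\big(\hat{M}(I) \times \hat{N}(I), F\big)$, and since products of presheaves are computed pointwise we have $\hat{M}(I) \times \hat{N}(I) = (\hat{M} \times \hat{N})(I)$, yielding $(\hat{M} \times \hat{N}) \odot F$. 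Naturality in $F$ (and in $\hat{M}, \hat{N}$) is then automatic, since each isomorphism is built pointwise from natural isomorphisms of $\Cat{V}$.

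The hard part will be the coherence conditions. After substituting the definitions above, both the pentagon for $a^\odot$ and the triangle relating $a^\odot$ to $\lambda^\odot$ unwind into diagrams built entirely from the associativity and unit isomorphisms of the Cartesian closed structure of $\Cat{V}$ — that is, from the exponential laws, the associativity of $\times$, and the terminal-object unit laws. I expect these to commute by the coherence theorem for (symmetric) monoidal closed categories, so the genuine labour is the bookkeeping: tracking each instance of the currying isomorphism and of the pointwise-product identity $(\hat{M} \times \hat{N})(I) = \hat{M}(I) \times \hat{N}(I)$ through the four-fold composite appearing in the pentagon, and confirming the two boundary paths agree. I would present this by reducing each coherence square to a known commuting square in $\Cat{V}$, rather than by a direct element-chase.
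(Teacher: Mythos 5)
Your construction of the unitor (via $1(I)=\mathbf{1}$ and $\Cat{V}(\mathbf{1},-)\cong\mathrm{id}$) and of the associator (via currying and the pointwise product $(\hat{M}\times\hat{N})(I)\cong\hat{M}(I)\times\hat{N}(I)$) is exactly the argument the paper gives. You additionally sketch the triangle and pentagon coherence checks, which the paper omits entirely; that is a reasonable extra step but does not change the substance of the proof.
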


We are now in a position to define the category of abstract Bayesian lenses, and show that this category coincides with the category of \(\Fun{Stat}\text{-lenses}\).
\begin{defn}[Bayesian lenses] \label{def:bayes-lens}
Denote by \(\Cat{BayesLens}\) the category of optics \(\Cat{Optic}_{\times, \odot}\) for the action of the Cartesian product on presheaf categories \(\times : \hat{\cat{C}} \to \Cat{V\mdash Cat}(\hat{\cat{C}}, \hat{\cat{C}})\) and the action \(\odot : \hat{\cat{C}} \to \Cat{V\mdash Cat}(\check{\cat{C}}, \check{\cat{C}})\) defined in \eqref{eq:L-action}. Its objects \((\hat{X}, \check{Y})\) are pairs of a presheaf and a copresheaf on \(\cat{C}\), and its morphisms \((\hat{X}, \check{A}) \lensto (\hat{Y}, \check{B})\) are abstract \textbf{Bayesian lenses}---elements of the set
\begin{equation*}
\Cat{Optic}_{\times, \odot}\Big((\hat{X}, \check{A}), (\hat{Y}, \check{B})\Big)
= \int^{\hat{M} \, : \, \hat{\cat{C}}} \hat{\cat{C}}(\hat{X}, \hat{M} \times \hat{Y}) \times \check{\cat{C}}(\hat{M} \odot \check{B}, \check{A})
\end{equation*}
A Bayesian lens \((\hat{X}, \check{X}) \lensto (\hat{Y}, \check{Y})\) is called a \textbf{simple} Bayesian lens.
\end{defn}

\begin{prop}
\(\Cat{BayesLens}\) is a category of lenses.
\begin{proof}
The product \(\times : \hat{\cat{C}} \to \Cat{V\mdash Cat}(\hat{\cat{C}}, \hat{\cat{C}})\) on \(\hat{\cat{C}}\) is Cartesian, so \(\Cat{Comon}(\hat{\cat{C}}) = \hat{\cat{C}}\). Hence
\begin{align}
\Cat{Optic}_{\times, \odot}\Big((\hat{X}, \check{A}), (\hat{Y}, \check{B})\Big)
& \cong \int^{\hat{M} \, : \, \hat{\cat{C}}} \hat{\cat{C}}(\hat{X}, \hat{Y}) \times \hat{\cat{C}}(\hat{X}, \hat{M}) \times \check{\cat{C}}(\hat{M} \odot \check{B}, \check{A}) \label{eq:cart-bayes}
\end{align}
is of the form in definition \ref{def:lens}.
\end{proof}
\end{prop}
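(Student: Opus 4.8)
The plan is to recognise this proposition as a direct instance of the alternative characterisation of lenses in Definition~\ref{def:lens}: any optic whose forwards actegory is a Cartesian monoidal category acting on itself by its product, and whose backwards actegory is an arbitrary actegory over that same category, is automatically a lens. So the real content is to verify that the two actegories underlying \(\Cat{BayesLens}\) fit this template, after which the Yoneda reductions packaged into Definition~\ref{def:lens} finish the job.

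First I would establish that \((\hat{\cat{C}}, \times)\) is Cartesian monoidal. Since \(\hat{\cat{C}} = \Cat{V\mdash Cat}(\cat{C}\op, \Cat{V})\) is a presheaf category valued in the Cartesian closed (hence Cartesian) category \(\Cat{V}\), its products are computed pointwise from those of \(\Cat{V}\) and are genuine categorical products; thus \(\times\) is the Cartesian product on \(\hat{\cat{C}}\). The consequence I need is the standard one: in a Cartesian monoidal category every object carries a unique commutative comonoid structure (diagonal and terminal map) and every morphism is a comonoid homomorphism, so that \(\Cat{Comon}(\hat{\cat{C}}) = \hat{\cat{C}}\). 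This is exactly the hypothesis Definition~\ref{def:lens} asks of the forwards actegory.

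Next I would invoke Proposition~\ref{prop:odot-actegory}, which already exhibits \(\check{\cat{C}}\) as a \(\hat{\cat{C}}\)-actegory via \(\odot\). Together with the previous step this presents \(\check{\cat{C}}\) as a \(\Cat{Comon}(\hat{\cat{C}})\)-actegory, so \(\Cat{BayesLens} = \Cat{Optic}_{\times, \odot}\) is precisely of the form \(\Cat{Optic}_{(\otimes, \circR)}\) with \(\otimes = \times\) Cartesian and \(\circR = \odot\). The reductions of Definition~\ref{def:lens} then transfer verbatim: because every morphism of \(\hat{\cat{C}}\) is a comonoid homomorphism, the universal property of the product splits \(\hat{\cat{C}}(\hat{X}, \hat{M} \times \hat{Y})\) as \(\hat{\cat{C}}(\hat{X}, \hat{Y}) \times \hat{\cat{C}}(\hat{X}, \hat{M})\), giving a three-factor coend, and a final coyoneda reduction over \(\hat{M}\) collapses it to the lens form
\[
\Cat{Optic}_{\times, \odot}\Big((\hat{X}, \check{A}), (\hat{Y}, \check{B})\Big)
\cong \hat{\cat{C}}(\hat{X}, \hat{Y}) \times \check{\cat{C}}(\hat{X} \odot \check{B}, \check{A}) .
\]

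I expect no serious obstacle, since the argument is a template application; the only point demanding care is the first reduction. One must confirm that the comonoid-splitting step is legitimate with \(\check{\cat{C}}\) as the backwards actegory, i.e.\ that nothing in the specific action \(\odot\) interferes with factoring out \(\hat{\cat{C}}(\hat{X}, \hat{M})\) and performing the coyoneda collapse over \(\hat{M}\). Because \(\odot\) only touches the backwards copresheaf factor while the splitting occurs entirely in the forwards Cartesian factor, this is purely formal; nonetheless it is the step I would write out most explicitly, to be sure the two coend manipulations compose correctly.
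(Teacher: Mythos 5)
Your proposal is correct and follows essentially the same route as the paper: observe that $\times$ on $\hat{\cat{C}}$ is Cartesian so that $\Cat{Comon}(\hat{\cat{C}}) = \hat{\cat{C}}$, note that $\odot$ makes $\check{\cat{C}}$ a $\Cat{Comon}(\hat{\cat{C}})$-actegory, and conclude that $\Cat{Optic}_{\times,\odot}$ matches the template of Definition~\ref{def:lens}. You simply spell out the pointwise Cartesian structure and the final coyoneda collapse in more detail than the paper's two-line argument does.
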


\begin{prop}[$\Fun{Stat}\text{-lenses}$ are Bayesian lenses]
Let \(\hat{(\cdot)} : \cat{C} \hookrightarrow \Cat{V\mdash Cat}(\cat{C}\op, \Cat{V})\) denote the Yoneda embedding and \(\check{(\cdot)} : \cat{C} \hookrightarrow \Cat{V\mdash Cat}(\cat{C}, \Cat{V})\) the coYoneda embedding. Then
\begin{equation*}
\Cat{Optic}_{\times, \odot}\Big((\hat{X}, \check{A}), (\hat{Y}, \check{B})\Big)
\cong
\Cat{GrLens}_\Fun{Stat} \Big( (X, A), (Y, B) \Big)
\end{equation*}
so that \(\Cat{GrLens}_\Fun{Stat}\) is equivalent to the full subcategory of \(\Cat{Optic}_{\times, \odot}\) on representable (co)presheaves.
\begin{proof}
\begin{align*}
\Cat{Optic}_{\times, \odot}\Big((\hat{X}, \check{A}), (\hat{Y}, \check{B})\Big)
& \cong \int^{\hat{M} \, : \, \hat{\cat{C}}} \hat{\cat{C}}(\hat{X}, \hat{Y}) \times \hat{\cat{C}}(\hat{X}, \hat{M}) \times \check{\cat{C}}(\hat{M} \odot \check{B}, \check{A}) \\
& \cong \int^{\hat{M} \, : \, \hat{\cat{C}}} \hat{\cat{C}}(\hat{X}, \hat{Y}) \times \hat{\cat{C}}(\hat{X}, \hat{M}) \times \check{\cat{C}}\left(\Cat{V}(\hat{M}(I), \check{B}), \check{A}\right) \\
& \cong \int^{\hat{M} \, : \, \hat{\cat{C}}} \cat{C}(X, Y) \times \hat{M}(X) \times \Cat{V}\left(\hat{M}(I), \cat{C}(B, A)\right) \\
& \cong \Cat{GrLens}_\Fun{Stat} \Big( (X, A), (Y, B) \Big)
\end{align*}
The first isomorphism is just \eqref{eq:cart-bayes}, the second obtains by definition of \(\odot\), the third by the Yoneda lemma, and the fourth by \eqref{eq:stat-lens-coend}.

Since Bayesian lenses are lenses, we can check diagrammatically that sequential composition in \(\Cat{Optic}_{\times, \odot}\) corresponds to that in \(\Cat{GrLens}_\Fun{Stat}\). The composite lens \(\optar{d}{d^\dag} \lenscirc \optar{c}{c^\dag}\) of \(\optar{d}{d^\dag} : (Y, B) \lensto (Z, C)\) after \(\optar{c}{c^\dag} : (X, A) \lensto (Y, B)\) has the depiction
\begin{equation*}
\scalebox{0.72}{\tikzfig{img/lens-cartesian-dc-1}}
\cong
\scalebox{0.72}{\tikzfig{img/lens-cartesian-dc-2b}}
\end{equation*}
where the copier \(\copier\) is the universal map with diagonal components \(x \mapsto (x, x)\) induced by the Cartesian product \(\times\) on \(\hat{C}\); recall from the discussion preceding \eqref{eq:lens-optic} that we can lift diagrams in \((\hat{C}, \times)\) to diagrams in \(\Cat{Optic}_{\times, \odot}\). The isomorphism therefore follows because every morphism in \(\hat{C}\) is canonically a comonoid homomorphism, and we can slide morphisms along the optical residual.

From the right-hand side, we can read that the \(\mathsf{view}\) component of the composite optic is represented by \(d \klcirc c\) and the \(\mathsf{update}\) component is represented by
\begin{align*}
& c^\dag \circ \big(\id_{\check{X}} \odot \, \d^\dag \, \big) \circ a^\odot_{\hat{X}, \hat{Y}, \check{X}} \circ \big(\id_{\hat{X}} \times \, c \, \big) \circ \copier \\ & \; \cong \;
\big(c^\dag_{(-)} \klcirc \d^\dag_{c \klcirc (-)}\big) \circ \copier  \\ & \; \cong \;
c^\dag \circ c^\ast d^\dag
\end{align*}
where the first expression is given by reading the right-hand side following the definition of optical composition \eqref{eq:optic-composition}; where the first isomorphism follows by the definitions of \(\odot\), \(a^\odot_{\hat{X}, \hat{Y}, \check{X}}\), and the notation \(c^\dag_{(-)}\) formally defined in Example \ref{ex:stat-meas}; and where the second isomorphism follows by the definition of \(c^\dag_{(-)}\) and the definition of fibrewise composition in Definition \ref{def:stat-cat}.

We therefore have \(\optar{d}{d^\dag} \lenscirc \optar{c}{c^\dag} \cong \optar{d \klcirc c}{c^\dag \circ c^\ast d^\dag}\), which are just the components of the corresponding composite \(\Fun{Stat}\text{-lens}\) (Example \ref{ex:stat-lens}), and so \(\Fun{Stat}\text{-lenses}\) are Bayesian lenses.
\end{proof}
\end{prop}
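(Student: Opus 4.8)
The plan is to prove the statement in two stages: first exhibit the asserted bijection as a chain of natural isomorphisms of hom-objects, and then separately check that it carries optic composition and identities to those of $\Cat{GrLens}_\Fun{Stat}$, so that it assembles into a fully faithful, identity-on-representables functor whose essential image is exactly the full subcategory on representable (co)presheaves. For the hom-object isomorphism I would begin from the coend defining $\Cat{Optic}_{\times, \odot}\big((\hat{X}, \check{A}), (\hat{Y}, \check{B})\big)$ and apply the lens reduction of \cref{def:lens}: since $\times$ on $\hat{\cat C}$ is Cartesian, $\Cat{Comon}(\hat{\cat C}) = \hat{\cat C}$ and every morphism is a comonoid homomorphism, so the forward leg splits as $\hat{\cat C}(\hat{X}, \hat{M} \times \hat{Y}) \cong \hat{\cat C}(\hat{X}, \hat{Y}) \times \hat{\cat C}(\hat{X}, \hat{M})$, yielding \eqref{eq:cart-bayes}. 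I would then substitute the definition \eqref{eq:L-action} of $\odot$, rewriting $\hat{M} \odot \check{B}$ as $\Cat{V}(\hat{M}(I), \check{B})$, and invoke representability of $\hat{X}, \hat{Y}, \check{A}, \check{B}$ via Yoneda: $\hat{\cat C}(\hat{X}, \hat{Y}) \cong \cat{C}(X, Y)$ by full faithfulness of the Yoneda embedding, $\hat{\cat C}(\hat{X}, \hat{M}) \cong \hat{M}(X)$ by the Yoneda lemma, and the copresheaf factor reducing to $\Cat{V}(\hat{M}(I), \cat{C}(B, A))$. The resulting coend $\int^{\hat{M}} \cat{C}(X, Y) \times \hat{M}(X) \times \Cat{V}(\hat{M}(I), \cat{C}(B, A))$ is literally the final line of \eqref{eq:stat-lens-coend}, which is isomorphic to $\Cat{GrLens}_\Fun{Stat}\big((X, A), (Y, B)\big)$, closing the chain.

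The step I expect to require the most care is the reduction $\check{\cat C}\big(\Cat{V}(\hat{M}(I), \check{B}), \check{A}\big) \cong \Cat{V}(\hat{M}(I), \cat{C}(B, A))$ for representable $\check{A} = \cat{C}(A, -)$ and $\check{B} = \cat{C}(B, -)$, where the variance must be tracked precisely. I would unfold it as the end $\int_Z \Cat{V}\big(\Cat{V}(\hat{M}(I), \cat{C}(B, Z)), \cat{C}(A, Z)\big)$ and reduce it using the enriched (co)Yoneda lemma together with the Cartesian closure of $\Cat{V}$, which lets the object $\hat{M}(I)$ be passed through the internal hom. The bookkeeping here is exactly what is abbreviated as ``the Yoneda lemma'', and getting the direction of the representables right is what guarantees the backward component lands in $\cat{C}(B, A)$ --- the state-dependent channels $B \klto A$ of \cref{def:stat-cat} --- rather than in its opposite.

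Finally, to upgrade the pointwise bijection to an equivalence with the full subcategory on representables, I would verify naturality in $(X, A)$ and $(Y, B)$ and that the isomorphism transports composition and identities. Because $\hat{\cat C}$ is Cartesian, I can lift the comonoid and copier diagrams of \secref{sec:graph-calc} into the optic calculus and compute the composite $\optar{d}{d^\dag} \lenscirc \optar{c}{c^\dag}$ diagrammatically, following \eqref{eq:optic-composition}: sliding the (comonoid-homomorphism) morphisms of $\hat{\cat C}$ along the residual and then unfolding the associator $a^\odot$ and the action $\odot$ via \eqref{eq:L-action} and \cref{prop:odot-actegory}. The forward leg then reads off as $d \klcirc c$, and the backward leg as $c^\dag \circ (\id \odot\, d^\dag) \circ a^\odot \circ (\id \times c) \circ \copier$, which by the definitions of $\odot$, $a^\odot$, and fibrewise composition in \cref{def:stat-cat} collapses to $c^\dag \circ c^\ast d^\dag$, precisely the composite $\Fun{Stat}$-lens of \cref{ex:stat-lens}; identities match because the optic identity is built from the unitors $\lambda^\odot$, corresponding to the constant identity-assigning map of \eqref{eq:stat}. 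I expect this composition check, rather than the hom-object isomorphism itself, to carry the real weight, since it is where the actegory coherence data $(\lambda^\odot, a^\odot)$ must be matched against the fibred composition of $\Fun{Stat}$.
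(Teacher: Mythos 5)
Your overall route is the same as the paper's: reduce the optic hom-object to the lens form \eqref{eq:cart-bayes}, unfold $\odot$ via \eqref{eq:L-action}, apply Yoneda to land on the last line of \eqref{eq:stat-lens-coend}, and then check diagrammatically that composition and identities transport to the composite $\Fun{Stat}$-lens $\optar{d \klcirc c}{c^\dag \circ c^\ast d^\dag}$. The composition check you outline is essentially identical to the paper's.

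However, the step you yourself single out as the delicate one contains a genuine slip. You unfold $\check{\cat{C}}\big(\hat{M} \odot \check{B}, \check{A}\big)$ as the end $\int_{Z} \Cat{V}\big(\Cat{V}(\hat{M}(I), \cat{C}(B,Z)), \cat{C}(A,Z)\big)$, i.e.\ as natural transformations \emph{from} the cotensor $\Cat{V}(\hat{M}(I), \cat{C}(B,-))$ \emph{to} $\cat{C}(A,-)$, and claim this reduces by enriched (co)Yoneda plus Cartesian closure to $\Cat{V}(\hat{M}(I), \cat{C}(B,A))$. It does not: the domain of that end is a cotensor of a representable, not a representable, so no Yoneda reduction applies to it; and already for $\hat{M}(I) \cong \mathbf{1}$ the end computes $\Cat{V\mdash Cat}(\cat{C},\Cat{V})\big(\cat{C}(B,-), \cat{C}(A,-)\big) \cong \cat{C}(A,B)$ --- the wrong variance for an update map. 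For the chain to close, $\check{\cat{C}}$ must be read as the \emph{opposite} of the copresheaf category, so that the coYoneda embedding $A \mapsto \cat{C}(A,-)$ is covariant and $\check{\cat{C}}(\check{B}, \check{A}) \cong \cat{C}(B,A)$; the hom then unfolds as $\int_{Z} \Cat{V}\big(\cat{C}(A,Z), \Cat{V}(\hat{M}(I), \cat{C}(B,Z))\big)$, which is $(\hat{M} \odot \check{B})(A) = \Cat{V}\big(\hat{M}(I), \cat{C}(B,A)\big)$ by a single application of the Yoneda lemma to the representable $\check{A}$. Admittedly the paper invites this confusion by writing $\check{\cat{C}} := \Cat{V\mdash Cat}(\cat{C}, \Cat{V})$ with no op, but as written your key reduction is false, and it is precisely the variance point you (rightly) flagged as carrying the risk.
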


\begin{rmk}
We will often abuse notation by indicating representable objects in \(\Cat{BayesLens}\) by their representations in \(\cat{C}\). That is, we will write \((X, A)\) instead of \((\hat{X}, \check{A})\) where this would be unambiguous.
\end{rmk}

It may be of interest sometimes to consider cases where the \(\mathsf{update}\) morphisms admit more or different structure to the \(\mathsf{view}\) morphisms in \(\cat{C}\). We can generalize Bayesian lenses to such a mixed case as follows.

\begin{defn} \label{def:mixed-bayes-lens}
We first generalize the action \(\odot\). Let \(\cat{D}\) be the category of \(\mathsf{update}\) morphisms. We assume it to be \(\Cat{V}\text{-enriched}\). We define an action \(\oslash : \hat{\cat{C}} \to \Cat{V\mdash Cat}(\check{\cat{D}}, \check{\cat{D}})\) of \(\hat{\cat{C}}\) on \(\check{\cat{D}}\) as a straightforward generalization of \(\odot\) as defined in \eqref{eq:L-action}. Once again, we give only the action on objects; the action on morphisms is analogous.
\begin{equation*} %
\begin{aligned}
\oslash : \hat{\cat{C}} & \to \Cat{V\mdash Cat}(\check{\cat{D}}, \check{\cat{D}}) \\
\hat{M} & \mapsto
  \begin{pmatrix*}
    \hat{M} \oslash - & : & \check{\cat{D}} & \to & \check{\cat{D}} \\
    & & P & \mapsto & \Cat{V}\big( \hat{M}(I), P \big)
  \end{pmatrix*}
\end{aligned}
\end{equation*}
\(\oslash\) equips \(\check{\cat{D}}\) with a \(\hat{\cat{C}}\text{-actegory}\) structure, just as in Proposition \ref{prop:odot-actegory}. We define a corresponding category of \textbf{mixed Bayesian lenses} as the obvious generalization of Definition \ref{def:bayes-lens}. Objects \((\hat{X}, \check{Y})\) are pairs of a presheaf on \(\cat{C}\) and a copresheaf on \(\cat{D}\), and morphisms \((\hat{X}, \check{A}) \lensto (\hat{Y}, \check{B})\) are elements of
\begin{equation*}
\Cat{Optic}_{\times, \oslash}\Big((\hat{X}, \check{A}), (\hat{Y}, \check{B})\Big)
= \int^{\hat{M} \, : \, \hat{\cat{C}}} \hat{\cat{C}}(\hat{X}, \hat{M} \times \hat{Y}) \times \check{\cat{D}}(\hat{M} \oslash \check{B}, \check{A}) \: .
\end{equation*}
\end{defn}

\begin{ex}[State-dependent algebra homomorphisms] \label{ex:stat-alg}
Let \(\cat{C} = \Kl(M)\) be the Kleisli category of a monad \(M : \Set \to \Set\) and let \(\cat{D} = \EM(M)\) be its Eilenberg-Moore category. Both \(\cat{C}\) and \(\cat{D}\) are \(\Set\text{-enriched}\). A (representable) mixed Bayesian lens \(\optar{v}{u} : (S, T) \lensto (A, B)\) over \(\cat{C}\) and \(\cat{D}\) is then given by a Kleisli morphism \(v : S \to MA\) and an \(S\text{-state-dependent}\) algebra homomorphism \(u : \Set\big(MS, \EM(M)(B, T)\big)\). Under the forgetful functor \(U : \EM(M) \to \Set\) and by the Cartesian closed structure of \(\Set\), \(u\) is equivalently a function \(u^\flat : MS \times B \to T\) such that \(u^\flat(\mu, -) : B \to T\) is an \(M\text{-algebra}\) homomorphism for each \(\mu : MS\).
\end{ex}

\section{Bayesian updates compose optically}
\label{sec:org5f354bb}
\label{sec:bayes-compose}

The categories of state-dependent channels and of Bayesian lenses defined in \secref{sec:stat-cat} and \secref{sec:bayes-lens} are substantial generalizations of concrete Bayesian inversion as introduced in \secref{sec:comp-prob}. In this section, we concentrate on the latter, noting that every pair of a stochastic channel \(c\) and its (state-dependent) inversion \(c^\dag_{(\cdot)}\) constitutes a simple Bayesian lens \(\optar{c}{c^\dag}\) satisfying the following definition. We adopt the terminology of `exact' and `approximate' inference \citep{Knoblauch2019Generalized}.

\begin{defn}[Exact and approximate Bayesian lens]
Let \(\optar{c}{c^\dag} : (X, X) \lensto (Y, Y)\) be a simple Bayesian lens. We say that \(\optar{c}{c^\dag}\) is \textbf{exact} if \(c\) admits Bayesian inversion and, for each \(\pi : I \klto X\) such that \(c \klcirc \pi\) has non-empty support, \(c\) and \(c^\dag_\pi\) together satisfy equation \eqref{eq:bayes-abstr}. Simple Bayesian lenses that are not exact are said to be \textbf{approximate}.
\end{defn}

We seek to prove the following theorem, which is the main result of this paper.

\begin{thm} \label{thm:optical-bayes}
Let \(\optar{c}{c^\dag}\) and \(\optar{d}{d^\dag}\) be sequentially composable exact Bayesian lenses. Then the contravariant component of the composite lens \(\optar{d}{d^\dag} \lenscirc \optar{c}{c^\dag} \cong \optar{d \klcirc c}{c^\dag \circ c^\ast d^\dag}\) is, up to \(d \klcirc c \klcirc \pi \text{-almost-}\allowbreak\text{equality}\), the Bayesian inversion of \(d \klcirc c\) with respect to any state \(\pi\) on the domain of \(c\) such that \(c \klcirc \pi\) has non-empty support. That is to say, \emph{Bayesian updates compose optically}: \((d \klcirc c)^\dag_\pi \overset{d \klcirc c \klcirc \pi}{\sim} c^\dag_\pi \klcirc d^\dag_{c \klcirc \pi}\). Graphically:
\begin{equation} \label{eq:bayes-lens-composite}
\scalebox{0.75}{\tikzfig{img/lens-cartesian-dc-2b}}
\; \sim \;
\scalebox{0.75}{\tikzfig{img/lens-cartesian-dc-0}}
\end{equation}
\end{thm}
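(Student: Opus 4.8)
The plan is to prove the claimed almost-equality by exhibiting the backward composite $c^\dag_\pi \klcirc d^\dag_{c \klcirc \pi}$ as an honest Bayesian inverse of $d \klcirc c$ with respect to $\pi$, and then appealing to the essential uniqueness of Bayesian inverses (Proposition \ref{prop:bayes-almost-equal}). Concretely, writing $\nu := c \klcirc \pi$ for the induced prior on the middle object, I would verify that $\beta := c^\dag_\pi \klcirc d^\dag_\nu$ satisfies the defining equation \eqref{eq:bayes-abstr} for the composite channel $d \klcirc c$ with respect to $\pi$. Since $(d \klcirc c)^\dag_\pi$ satisfies the same equation, Proposition \ref{prop:bayes-almost-equal} then delivers $(d \klcirc c)^\dag_\pi \overset{d \klcirc c \klcirc \pi}{\sim} \beta$ at once, which is the assertion.

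The heart of the argument is a two-step diagram chase in the graphical calculus of \secref{sec:graph-calc}. Starting from the right-hand form of \eqref{eq:bayes-abstr} for $d \klcirc c$ with $\beta$ in place of the inverse --- that is, the joint state $(\beta \otimes \id_Z) \klcirc \bcopier_Z \klcirc (d \klcirc c \klcirc \pi)$ --- I would first factor $\beta$ as $c^\dag_\pi$ after $d^\dag_\nu$ and recognise the subdiagram $(d^\dag_\nu \otimes \id_Z) \klcirc \bcopier_Z \klcirc (d \klcirc \nu)$, which is exactly the right-hand side of \eqref{eq:bayes-abstr} for $d$ with respect to $\nu$. Exactness of $\optar{d}{d^\dag}$ lets me replace it by the forward joint $(\id_Y \otimes d) \klcirc \bcopier_Y \klcirc \nu$. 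Using the bifunctoriality (interchange) of $\otimes$ to slide $d$ up past $c^\dag_\pi$, the remaining subdiagram becomes $(c^\dag_\pi \otimes \id_Y) \klcirc \bcopier_Y \klcirc (c \klcirc \pi)$, which is the right-hand side of \eqref{eq:bayes-abstr} for $c$ with respect to $\pi$. Exactness of $\optar{c}{c^\dag}$ replaces this by $(\id_X \otimes c) \klcirc \bcopier_X \klcirc \pi$, and the whole diagram collapses to $(\id_X \otimes (d \klcirc c)) \klcirc \bcopier_X \klcirc \pi$, the left-hand forward form of \eqref{eq:bayes-abstr} for $d \klcirc c$. This completes the verification that $\beta$ is a Bayesian inverse.

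For the concrete incarnations in $\Kl(\Dst)$ and $\Cat{sfKrn}$ I would run the same skeleton but track the governing state of each almost-equality, since there the component Bayesian equations hold only up to almost-equality: in $\Kl(\Dst)$ one substitutes the explicit formula \eqref{eq:finite-bayes} and checks the identity pointwise wherever $(d \klcirc c \klcirc \pi)(z) \neq 0$, the normalising denominators telescoping by the chain rule; in $\Cat{sfKrn}$ one substitutes \eqref{eq:bayes-density-krn} and verifies the equality of the resulting integrals, with the almost-inverses $p^{-1}$ supplying the normalisations. Proposition \ref{prop:comp-preserve-almost-eq} propagates almost-equalities through the post-composition by $c^\dag_\pi$, while Lemma \ref{lemma:eff-chan-almost-eq} and Lemma \ref{lemma:eff-blocks-almost-eq} are what let me upgrade the $c \klcirc \pi$-almost-equality arising from the inner ($c$) step to the $d \klcirc c \klcirc \pi$-almost-equality demanded by the conclusion.

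The main obstacle I anticipate is precisely this bookkeeping of governing states and of support. The abstract step is a clean equality only because exactness is defined via the honest equation \eqref{eq:bayes-abstr}; in the density setting the inverses exist merely almost everywhere, so I must ensure that $d \klcirc (c \klcirc \pi)$ has non-empty support --- which is where the hypothesis on $c \klcirc \pi$ is used, together with causality of the forward channels in the affine case or the non-degeneracy assumption in the density case --- so that $d^\dag_\nu$ and its almost-inverse are well-defined, and I must check that the two almost-equalities, governed a priori by the distinct marginals $c \klcirc \pi$ and $d \klcirc c \klcirc \pi$, can be aligned. Everything else reduces to routine applications of the copy--delete equations and the Chapman--Kolmogorov and Fubini--Tonelli computations.
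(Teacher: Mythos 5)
Your proposal is correct and follows essentially the same route as the paper: the abstract proof in \secref{sec:bayes-compose} likewise shows that $c^\dag_\pi \klcirc d^\dag_{c\klcirc\pi}$ satisfies the defining equation \eqref{eq:bayes-abstr} for $d \klcirc c$ with respect to $\pi$ by applying exactness of $\optar{d}{d^\dag}$ at $c \klcirc \pi$ and then exactness of $\optar{c}{c^\dag}$ at $\pi$, and concludes by Proposition \ref{prop:bayes-almost-equal}. Your remarks on the concrete cases (Chapman--Kolmogorov telescoping in $\Kl(\Dst)$, almost-inverse bookkeeping via Lemmas \ref{lemma:eff-chan-almost-eq} and \ref{lemma:eff-blocks-almost-eq} in the density setting) also match the paper's supplementary proofs.
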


\begin{cor} \label{cor:optical-bayes}
Let \(\cat{C}^\dag\) be the wide subcategory of channels in \(\cat{C}\) that admit Bayesian inversion (Definition \ref{def:admit-bayes}). Then \(\cat{C}^\dag\) embeds functorially into \(\Cat{BayesLens}\). On objects, the embedding is given by \(X \mapsto (\hat{X}, \check{X})\); on morphisms, \(c \mapsto \optar{c}{c^\dag}\).
\end{cor}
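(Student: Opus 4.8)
The substance of the final statement lies in Theorem~\ref{thm:optical-bayes}, which I may assume but whose proof I sketch first, since the Corollary reduces to it. The plan for the Theorem is to show that the contravariant component of the lens composite, namely the state-dependent channel $\pi \mapsto c^\dag_\pi \klcirc d^\dag_{c\klcirc\pi}$, is \emph{itself} a Bayesian inverse of $d\klcirc c$ with respect to $\pi$---that is, it satisfies the abstract characterisation~\eqref{eq:bayes-abstr}---and then to invoke Proposition~\ref{prop:bayes-almost-equal} (Bayesian inverses are almost-equal) to conclude that it is $d\klcirc c\klcirc\pi$-almost-equal to any chosen inverse $(d\klcirc c)^\dag_\pi$. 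Since $\optar{c}{c^\dag}$ and $\optar{d}{d^\dag}$ are \emph{exact}, the two instances of~\eqref{eq:bayes-abstr} used below are strict equalities, so almost-equality enters only at this last step and merely reflects the non-uniqueness of the canonical inverse.

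For the Theorem itself I would argue graphically. Build the triple joint state on $X\otimes Y\otimes Z$ obtained by copying $\pi$, pushing one copy through $c$, copying the resulting $Y$, and pushing one copy through $d$. Marginalising out $Y$ by a counit recovers the left-hand side of~\eqref{eq:bayes-abstr} for $d\klcirc c$, namely $(\id_X \otimes (d\klcirc c))\klcirc\bcopier_X\klcirc\pi$. On the triple joint I would instead first apply~\eqref{eq:bayes-abstr} for $c$ at $\pi$ to reverse the $X$--$Y$ block (so that $X$ is produced from the marginal $c\klcirc\pi$ via $c^\dag_\pi$), and then apply~\eqref{eq:bayes-abstr} for $d$ at $c\klcirc\pi$ to reverse the $Y$--$Z$ block (so that $Y$ is produced from $d\klcirc c\klcirc\pi$ via $d^\dag_{c\klcirc\pi}$). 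Marginalising out the intervening $Y$ collapses the two reversed blocks into the single composite $c^\dag_\pi\klcirc d^\dag_{c\klcirc\pi}$ fed by the $Z$-marginal $d\klcirc c\klcirc\pi$; this is exactly the right-hand side of~\eqref{eq:bayes-abstr} for $d\klcirc c$. Comparing the two readings of the joint yields the claim.

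Granting the Theorem, the Corollary follows by checking that $X\mapsto(\hat X,\check X)$ and $c\mapsto\optar{c}{c^\dag}$ form a faithful functor. First I would confirm $\cat{C}^\dag$ is a genuine wide subcategory: every identity admits inversion, with $(\id_X)^\dag_\pi \overset{\pi}{\sim}\id_X$ (the joint $\bcopier_X\klcirc\pi$ is unchanged by applying $\id_X$ to one leg), and closure under composition is precisely the existence half of Theorem~\ref{thm:optical-bayes}. Identities are then preserved because $\optar{\id_X}{(\id_X)^\dag}$ is almost-equal to the identity $\Fun{Stat}$-lens $(\id_X,\id_X)$ of Example~\ref{ex:stat-lens}. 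Composition is preserved because the proposition identifying $\Fun{Stat}$-lenses with Bayesian lenses gives $\optar{d}{d^\dag}\lenscirc\optar{c}{c^\dag}\cong\optar{d\klcirc c}{c^\dag\circ c^\ast d^\dag}$, whose contravariant component sends $\pi$ to $c^\dag_\pi\klcirc d^\dag_{c\klcirc\pi}$ (Example~\ref{ex:stat-lens}); by the Theorem this is $d\klcirc c\klcirc\pi$-almost-equal to the contravariant component $\pi\mapsto(d\klcirc c)^\dag_\pi$ of the image of $d\klcirc c$, while the view components agree on the nose. Faithfulness holds since the view component of $\optar{c}{c^\dag}$ recovers $c$, and the object map is injective because the Yoneda and coYoneda embeddings are fully faithful.

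The main obstacle is the bookkeeping of almost-equality. Because Bayesian inverses are unique only up to $\pi$-almost-equality, the morphism assignment does not name a literal element of $\Cat{BayesLens}$ until a representative $c^\dag$ is fixed, and then the two functor axioms hold only up to almost-equality of the contravariant components rather than strictly; I would resolve this by passing to the quotient of $\Cat{BayesLens}$ that identifies simple lenses whose contravariant components are $\pi$-almost-equal at every prior, checking that almost-equality of contravariant components is a congruence for lens composition---for which Proposition~\ref{prop:comp-preserve-almost-eq} is the key input---so that the functor descends strictly to the quotient. One must also track the support hypotheses, since $c^\dag_\pi$ and $d^\dag_{c\klcirc\pi}$ exist only when $c\klcirc\pi$ and $d\klcirc c\klcirc\pi$ have non-empty support; this is where the concrete proofs in $\Kl(\Dst)$ and $\Cat{sfKrn}$ do their real work, replacing the two strict uses of~\eqref{eq:bayes-abstr} by explicit density-function computations and the almost-invertibility machinery of Lemmas~\ref{lemma:eff-chan-almost-eq} and~\ref{lemma:eff-blocks-almost-eq}.
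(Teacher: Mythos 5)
Your proposal is correct and follows essentially the same route as the paper: the corollary is deduced directly from Theorem~\ref{thm:optical-bayes} together with the identification of lens composition from Example~\ref{ex:stat-lens}, and your sketch of the theorem itself matches the paper's abstract proof (two applications of~\eqref{eq:bayes-abstr} followed by Proposition~\ref{prop:bayes-almost-equal}). The paper in fact offers no explicit proof of the corollary beyond a remark that the embedding requires a choice of inversion for each channel; your additional checks---that $\cat{C}^\dag$ is closed under identities and composition, and that functoriality is only strict after passing to a quotient by pointwise almost-equality (or after restricting to finite support, where almost-equality collapses to equality)---are correct and make explicit a subtlety the paper leaves implicit.
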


Because Bayesian inversion is only determined up to almost-equality, the embedding \(\cat{C}^\dag \hookrightarrow \Cat{BayesLens}\) is not unique, requiring a choice of inversion for each channel. However, in most situations of practical interest, there is a canonical choice. For those channels which have density function representations, the canonical choice is given by Proposition \ref{prop:bayes-density-graph} or equation \eqref{eq:bayes-density-krn}; alternatively, by restricting to finite support, Bayesian inversions are actually unique.

We supply proofs of Theorem \ref{thm:optical-bayes} in various copy-delete categories at various levels of abstraction, starting with the concrete case of finitely-supported probability in \(\Kl(\Dst)\). We follow this with the most abstract case, in an arbitrary copy-delete category admitting Bayesian inversion (first without and then with density functions), followed then by the case of s-finite measures (with density functions), from which we also recover the discrete result.

For pedagogical purposes, the structure of this section mirrors that of \secref{sec:comp-prob}, and we have attempted to structure the proofs to emphasize their commonalities.

\subsection{Discrete case}
\label{sec:orgebdc5b0}
\label{sec:finite-result}

In this section, we work in the category of stochastic channels \(\cat{C} = \Kl(\Dst)\), described in \secref{sec:fin-prob}. Note that with finite support, almost-equality reduces to equality, and so Bayesian inversions where they exist are unique.

\begin{proof}[Proof of Theorem \ref{thm:optical-bayes}]
Suppose \(p : X \to \Dst Y\) and \(q : Y \to \Dst Z\). Given a prior \(\rho : 1 \to \Dst X\) on \(X\), we are interested in the Bayesian inversion \((q \bullet p)^\dag_\rho : Z \to \Dst X\) of \(q \bullet p : X \to \Dst Z\) with respect to \(\rho\). Following \eqref{eq:finite-bayes}, we have
\begin{equation*}
\begin{aligned}
\bdag{(q \klcirc p)} &: \Dst X \times Z \to \Dst X \\
&:= \rho \times z \mapsto \sum_{x : X} \, \boxed{\frac{(q \klcirc p)(z|x) \cdot \rho(x)}{\sum_{x':X} (q \klcirc p)(z | x') \cdot \rho(x')}} \, \ket{x}
= \sum_{x : X} \, \boxed{\frac{(q \klcirc p)(z|x) \cdot \rho(x)}{(q \klcirc p \klcirc \rho)(z)}} \, \ket{x}.
\end{aligned}
\end{equation*}
The lens composite of \(q^\dag\) and \(p^\dag\) with respect to \(\rho\) is \(p^\dag_\rho \klcirc q^\dag_{p \klcirc \rho}\). Our task is therefore to show that
\begin{equation*}
p^\dag_\rho \klcirc q^\dag_{p \klcirc \rho}(z) = \sum_{x : X} \, \boxed{\frac{(q \klcirc p)(z|x) \cdot \rho(x)}{(q \klcirc p \klcirc \rho)(z)}} \, \ket{x} = (q \klcirc p)^\dag_\rho(z) \, .
\end{equation*}
By Kleisli extension \eqref{eq:klext-finite}, for any \(\sigma : 1 \to \Dst Y\),
\begin{equation*}
p^\dag_\rho \klcirc \sigma = \sum_{x : X} \, \boxed{\sum_{y : Y} p^\dag_\rho(x|y) \cdot \sigma(y)} \, \ket{x} = \sum_{x : X} \, \boxed{\sum_{y : Y} \left( \frac{p(y|x) \cdot \rho(x)}{(p \klcirc \rho)(y)} \right) \sigma(y)} \, \ket{x} .
\end{equation*}
Now, let \(\sigma \mapsto q^\dag_{p \klcirc \rho}(z)\), so
\begin{align*}
p^\dag_\rho \klcirc q^\dag_{p \klcirc \rho}(z)
&= \sum_{x : X} \, \boxed{\sum_{y : Y} \left( \frac{p(y|x) \cdot \rho(x)}{(p \klcirc \rho)(y)} \right) \cdot (q^\dag_{p \klcirc \rho})(y|z)} \, \ket{x} \\
&= \sum_{x:X} \, \boxed{\sum_{y:Y} \left( \frac{p(y|x) \cdot \rho(x)}{(p \klcirc \rho)(y)} \right) \cdot \left( \frac{q(z|y) \cdot (p \klcirc \rho)(y)}{(q \klcirc p \klcirc \rho)(z)} \right)} \, \ket{x} \\
&= \sum_{x:X} \, \boxed{\sum_{y:Y} \frac{q(z|y) \cdot p(y|x) \cdot \rho(x)}{(q \klcirc p \klcirc \rho)(z)}} \, \ket{x} \\
&= \sum_{x:X} \, \boxed{\frac{(q \klcirc p)(z|x) \cdot \rho(x)}{(q \klcirc p \klcirc \rho)(z)}} \, \ket{x} \\
&= (q \klcirc p)^\dag_\rho(z)
\end{align*}
as required.
\end{proof}

\subsection{Abstract case}
\label{sec:orgb1e9d1c}

Here, we work in an arbitrary copy-delete category \(\cat{C}\) with those morphisms that admit Bayesian inversion in the abstract sense of equation \eqref{eq:bayes-abstr} (\secref{sec:abstract-bayes}). This proof implies the result in the more concrete categories \(\Kl(\Dst)\) and \(\Cat{sfKrn}\); we include those for their computational and pedagogical content.

\begin{proof}[Proof of Theorem \ref{thm:optical-bayes}]
Suppose \(c^\dag_\pi : Y \klto X\) is the Bayesian inverse of \(c : X \klto Y\) with respect to \(\pi : I \klto X\). Suppose also that \(d^\dag_{c \klcirc \pi} : Z \klto Y\) is the Bayesian inverse of \(d : Y \klto X\) with respect to \(c \klcirc \pi : I \klto Y\), and that \((d \klcirc c)^\dag_\pi : Z \klto X\) is the Bayesian inverse of \(d \klcirc c : X \klto Z\) with respect to \(\pi : I \klto X\):
\[
\tikzfig{img/joint-d-c-pi} \ \cong \ \tikzfig{img/joint-ddag-dc-pi}
\hspace{0.06\linewidth}
\text{and}
\hspace{0.06\linewidth}
\tikzfig{img/joint-dc-pi} \ \cong \ \tikzfig{img/joint-dcdag-dc-pi}
\]

The lens composite of these Bayesian inverses has the form \(c^\dag_\pi \klcirc d^\dag_{c \klcirc \pi} : Z \klto X\), so to establish the result it suffices to show that
\[ \tikzfig{img/joint-cdag-ddag-dc-pi} \ \cong \ \tikzfig{img/joint-dc-pi} \]
where we can think of the left-hand side as `unfolding' along the residual the left-hand side of \eqref{eq:bayes-lens-composite}.

We have the following isomorphisms:
\[ \tikzfig{img/joint-cdag-ddag-dc-pi} \ \cong \ \tikzfig{img/joint-cdag_d-c-pi} \ 
\cong \ \tikzfig{img/joint-dc-pi} \]
where the first obtains because \(d^\dag_{c \klcirc \pi}\) is the Bayesian inverse of \(d\) with respect to \(c \klcirc \pi\), and the second because \(c^\dag_\pi\) is the Bayesian inverse of \(c\) with respect to \(\pi\). Hence, \(c^\dag_\pi \klcirc d^\dag_{c \klcirc \pi}\) and \((d \klcirc c)^\dag_\pi\) are both Bayesian inversions of \(d \klcirc c\) with respect to \(\pi\). Since Bayesian inversions are almost-equal (Proposition \ref{prop:bayes-almost-equal}), we have \(c^\dag_\pi \klcirc d^\dag_{c \klcirc \pi} \overset{d \klcirc c \klcirc \pi}{\sim} (d \klcirc c)^\dag_\pi\), as required.
\end{proof}

\subsubsection{With density functions}
\label{sec:org19b04b0}
\label{sec:abstr-density-proof}

Here, we work in an abstract copy-delete category \(\cat{C}\) in which stochastic channels can be represented by effects in the sense of Definition \ref{def:density-functions} (\secref{sec:density-functions}).

\begin{proof}[Proof of Theorem \ref{thm:optical-bayes}]
Suppose \(c : X \klto Y\) and \(d : Y \klto Z\) are represented by effects
\[
\tikzfig{img/def-density-function-c}
\hspace{0.05\linewidth}
\text{and}
\hspace{0.05\linewidth}
\tikzfig{img/def-density-function-d}
\]
so that the composite \(d \klcirc c : X \klto Z\) is given by
\[ \tikzfig{img/def-density-function-dc} \]
where the effect \(p \mu q : X \otimes Z \klto I\) is defined in the obvious way.

Following Proposition \ref{prop:bayes-density-graph}, the Bayesian inverse \(c^\dag_\pi : Y \klto X\) of \(c\) with respect to \(\pi : I \klto X\) is given by
\[ \tikzfig{img/channel-density-cdag-pi} \]
where \(p^{-1} : Y \klto I\) is a \(\mu\text{-almost-inverse}\) for the effect
\[ \tikzfig{img/likelihood-p-pi} \, . \]

Similarly, the Bayesian inverse \(d^\dag_{c \klcirc \pi} : Z \klto Y\) of \(d\) with respect to \(c \klcirc \pi : I \klto Y\) is
\[ \tikzfig{img/channel-density-ddag-c-pi} , \]
with \(q^{-1} : Z \klto I\) the corresponding \(\nu\text{-almost-inverse}\) for 
\[ \tikzfig{img/likelihood-q-c-pi} , \]
and the Bayesian inverse for \(d \klcirc c\) with respect to \(\pi\) is \((d \klcirc c)^\dag_\pi : Z \klto X\), with the form
\[ \tikzfig{img/channel-density-dcdag-pi-simpl} \ =\ \tikzfig{img/channel-density-dcdag-pi} \]
where \((p \mu q)^{-1}\) is also a \(\nu\text{-almost-inverse}\) for \(q \klcirc \big((c \klcirc \pi) \otimes \id_Z\big) : Z \klto I\).

We seek to show that \((d \klcirc c)^\dag_\pi \overset{d \klcirc c \klcirc \pi}{\sim} c^\dag_\pi \klcirc d^\dag_{c \klcirc \pi}\). We start from the lens composite \(c^\dag_\pi \klcirc d^\dag_{c \klcirc \pi}\) which is given by
\[ \tikzfig{img/channel-density-cdag-pi-ddag-c-pi-1} \]
\begin{align*}
&\ =\ \tikzfig{img/channel-density-cdag-pi-ddag-c-pi-2} \\
&\hspace{0.033\linewidth}\big(\text{by definition of } c \big)
\end{align*}
\begin{align*}
&\ \cong\ \tikzfig{img/channel-density-cdag-pi-ddag-c-pi-3} \\
&\hspace{0.033\linewidth}\big(\text{by associativity \eqref{eq:comonoid-law} and commutativity \eqref{eq:comonoid-commute} of copiers}\big) \\
\end{align*}
\begin{align*}
&\ \overset{\mu}{\sim}\ \tikzfig{img/channel-density-cdag-pi-ddag-c-pi-4} \\
&\hspace{0.033\linewidth}\big(\text{by definition of } p^{-1} \big) \\
\end{align*}
\begin{align*}
&\ \cong\ \tikzfig{img/channel-density-cdag-pi-ddag-c-pi-5} \\
&\hspace{0.033\linewidth}\big(\text{by unitality \eqref{eq:comonoid-law} of copiers}\big) \\
\end{align*}
\[ \overset{\nu}{\sim}\ \tikzfig{img/channel-density-dcdag-pi} \, . \]
The last line follows by Lemma \ref{lemma:eff-chan-almost-eq}, since the two almost-inverses \((p \mu q)^{-1}\) and \(q^{-1}\) are \(\nu\text{-almost}\) equal (Proposition \ref{prop:almost-inverse-almost-equal}).

We have shown that \((d \klcirc c)^\dag_\pi \overset{\nu}{\sim} c^\dag_\pi \klcirc d^\dag_{c \klcirc \pi}\). Recall that \(d\) is represented by an effect with respect to the state \(\nu\). So by Lemma \ref{lemma:eff-blocks-almost-eq}, we have \((d \klcirc c)^\dag_\pi \overset{d \klcirc c \klcirc \pi}{\sim} c^\dag_\pi \klcirc d^\dag_{c \klcirc \pi}\), as required.
\end{proof}

\subsection{S-Finite case with density functions}
\label{sec:orgdb85741}

Here, we instantiate the abstract density function proof of \secref{sec:abstr-density-proof} in the category \(\Cat{sfKrn}\) of s-finite kernels described in \secref{sec:sfKrn}, in order to obtain a form of the result commensurate with the informal form of Bayes' rule \eqref{eq:bayes-density-informal}. Then, restricting to finitely supported measures, we recover the discrete case of the result in \(\Kl(\Dst)\).

\begin{proof}[Proof of Theorem \ref{thm:optical-bayes}]
Equation \eqref{eq:bayes-density-krn} states that, by interpreting the string diagram of Proposition \ref{prop:bayes-density-graph} for Bayesian inversion via density functions in \(\Cat{sfKrn}\), the Bayesian inverse \(d^\dag_\rho\) of \(d : Y \klto Z\) with respect to \(\rho : I \klto Y\) is given by
\begin{align*}
d^\dag_\rho : Z \times \Sigma_Y \to [0, \infty]
:= z \times B & \mapsto \left( \int_{y:B} \rho(\d y) \, q(z|y) \right) q^{-1}(z) \\
&= q^{-1}(z) \int_{y:B} q(z|y) \, \rho(\d y)
\end{align*}
where \(q^{-1} : Z \klto I\) is a \(\nu\text{-almost-inverse}\) for \(q \klcirc ((c \klcirc \pi) \, \otimes \id_Z)\), given up to \(\nu\text{-almost-equality}\) by
\[ q^{-1} : Z \to [0, \infty] := z \mapsto \left( \int_{y:Y} q(z|y) \, \mu(\d y) \int_{x:X} p(y|x) \, \pi(\d x) \right)^{-1} . \]

Suppose then that
\begin{equation*}
\rho = c \klcirc \pi : 1 \times \Sigma_Y \to [0, \infty]
:= \ast \times B \mapsto \int_{y:B} \mu(\d y) \int_{x:X} p(y|x) \, \pi(\d x) \, .
\end{equation*}
We therefore have, by direct substitution,
\begin{align*}
d^\dag_{c \klcirc \pi} : Z \times \Sigma_Y \to [0, \infty]
:= z \times B & \mapsto q^{-1}(z) \int_{y:B} q(z|y) \mu(\d y) \int_{x:X} p(y|x) \, \pi(\d x) .
\end{align*}

We now write down directly the Bayesian inverse of the composite channel, \((d \klcirc c)^\dag_\pi : Z \klto X\):
\begin{align*}
(d \klcirc c)^\dag_\pi : Z \times \Sigma_X \to & [0, \infty] \\
 := z \times A \mapsto & \left( \int_{x:A} \pi(\d x) \, (p \mu q)(z|x) \right) (p \mu q)^{-1}(z) \\
  = & (p \mu q)^{-1}(z) \int_{x:A} (p \mu q)(z|x) \, \pi(\d x) \\
  = & (p \mu q)^{-1}(z) \int_{x:A} \int_{y:Y} q(z|y) \, \mu(\d y) \, p(y|x) \, \pi(\d x)
\end{align*}
where \((p \mu q)^{-1}\) is a \(\nu\text{-almost-inverse}\) for \((p \mu q) \klcirc (\pi \otimes \id_Z)\), or equivalently (since almost-inverses are almost-equal; Proposition \ref{prop:almost-inverse-almost-equal}), a \(\nu\text{-almost-inverse}\) for \(q \klcirc ((c \klcirc \pi) \, \otimes \id_Z)\).

The lens form of the inverse composite is \(c^\dag_\pi \klcirc d^\dag_{c \klcirc \pi} : Z \klto X\), and we follow the diagrammatic reasoning to show that this is equal to the direct form above:
\begin{align*}
c^\dag_\pi \klcirc d^\dag_{c \klcirc \pi} : Z \times \Sigma_X \to & [0, \infty] \\
 := z \times A \mapsto
    & \, \int_{y:Y} c^\dag_\pi(A|y) \cdot (d^\dag_{c \klcirc \pi})(\d y|z) \\
  = & \, \int_{y:Y} \left( p^{-1}(y) \, \int_{x:A} p(y|x) \, \pi(\d x) \right) \cdot \left( q^{-1}(z) \, q(z|y) \, \mu(\d y) \, \int_{x:X} p(y|x) \, \pi(\d x) \right) \\
  = & \, \int_{y:Y} \left( p^{-1}(y) \, \int_{x:X} p(y|x) \, \pi(\d x) \right) \cdot \left( q^{-1}(z) \, q(z|y) \, \mu(\d y) \, \int_{x:A} p(y|x) \, \pi(\d x) \right) \\
  \overset{\mu}{\sim} & \, q^{-1}(z) \int_{y:Y} q(z|y) \, \mu(\d y) \int_{x:A} p(y|x) \, \pi(\d x) \\
  = & \, q^{-1}(z) \int_{x:A} \int_{y:Y} q(z|y) \, \mu(\d y) \, p(y|x) \, \pi(\d x) \\
  \overset{\nu}{\sim} & \, \big((d \klcirc c)^\dag_\pi\big)(A|z)
\end{align*}
The second equality follows by Fubini's theorem in \(\Cat{sfKrn}\) (equivalently, by the symmetry of the monoidal product \(\otimes\); see \secref{sec:sfKrn}), and the last line by Lemma \ref{lemma:eff-chan-almost-eq} since almost-inverses are almost-equal (Proposition \ref{prop:almost-inverse-almost-equal}); compare the final part of the graphical proof in \secref{sec:abstr-density-proof}. Finally, by Lemma \ref{lemma:eff-blocks-almost-eq}, we have \((d \klcirc c)^\dag_\pi \overset{d \klcirc c \klcirc \pi}{\sim} c^\dag_\pi \klcirc d^\dag_{c \klcirc \pi}\), also as in \secref{sec:abstr-density-proof}.
\end{proof}

\begin{cor}
By restricting to finitely supported measures, we recover the discrete-case result of \secref{sec:finite-result}.
\begin{proof}
Suppose then that the prior \(\pi : I \klto X\) is represented by a density function \(\rho : X \klto I\) with respect to a measure \(\upsilon : I \klto X\). Suppose further that the measures \(\mu\), \(\nu\), and \(\upsilon\) are finitely supported. Then
\begin{align*}
\big((d \klcirc c)^\dag_\pi\big)({x} \, | \, z)
&= q^{-1}(z) \int_{x:\{x\}} \int_{y:Y} q(z|y) \, \mu(\d y) \, p(y|x) \, \upsilon(\d x) \, \rho(x) \\
&= \frac{\int_{x:\{x\}} \int_{y:Y} q(z|y) \, \mu(\d y) \, p(y|x) \, \upsilon(\d x) \, \rho(x)}{\int_{y':Y} q(z|y') \, \mu(\d y') \int_{x':X} p(y'|x') \, \upsilon(\d x) \, \rho(x')} \\
&= \frac{\sum_{y:Y} q(z|y) \, p(y|x) \, \rho(x)}{\sum_{y':Y} q(z|y') \sum_{x':X} p(y'|x') \, \rho(x')}
\end{align*}
Now, assume that the density functions are indeed probability densities (\emph{i.e.}, they sum to 1 over their support), and note that they are therefore functions of the form \(X \times Y \to [0, 1]\). We therefore recognise them as stochastic matrices \(X \klto Y\) in \(\Kl(\Dst)\), and can interpret the foregoing expression as
\[
\frac{\sum_{y:Y} q(z|y) \, p(y|x) \, \rho(x)}{\sum_{y':Y} q(z|y') \sum_{x':X} p(y'|x') \, \rho(x')} = \frac{\big(q \klcirc p\big)(z|x) \cdot \rho(x)}{\big(q \klcirc p \klcirc \rho\big)(z)} ,
\]
where \(\klcirc\) is now composition in \(\Kl(\Dst)\), thereby recovering the result in the discrete case.
\end{proof}
\end{cor}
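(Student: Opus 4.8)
The plan is to specialize the \(\Cat{sfKrn}\) density-function computation of the preceding proof to the finitely supported setting, and then to check that the resulting expression is literally the discrete Bayes formula \eqref{eq:finite-bayes} with composition read in \(\Kl(\Dst)\). First I would take the prior \(\pi : I \klto X\) to be represented by a density \(\rho : X \klto I\) with respect to a base measure \(\upsilon : I \klto X\), and assume that \(\mu\), \(\nu\), and \(\upsilon\) are all finitely supported. The key mechanical observation is then that each integral against such a measure collapses to a finite sum over its support, weighted by the relevant density; this is exactly the transition that turns the measure-theoretic kernels of \(\Cat{sfKrn}\) into the stochastic matrices of \(\Kl(\Dst)\).

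Next I would evaluate the composite inverse \((d \klcirc c)^\dag_\pi\) at a singleton \(\{x\}\) given \(z\), substituting the explicit \(\Cat{sfKrn}\) form for \((d \klcirc c)^\dag_\pi\) established earlier. Integrating over the singleton picks out the single term at \(x\), and writing the normalising almost-inverse \((p\mu q)^{-1}(z)\) as the reciprocal of the total mass yields a ratio of finite sums. I would then impose that the densities are genuine probability densities (that is, they sum to \(1\) over their supports), so that they are functions of type \(X \times Y \to [0,1]\); these are precisely left-stochastic matrices, hence channels in \(\Kl(\Dst)\).

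Finally I would identify the numerator \(\sum_{y} q(z|y)\, p(y|x)\, \rho(x)\) with \((q \klcirc p)(z|x)\cdot\rho(x)\) via the Chapman--Kolmogorov form of composition in \(\Kl(\Dst)\), and the denominator with \((q \klcirc p \klcirc \rho)(z)\). This matches \((q \klcirc p)^\dag_\rho(z)\) as given by \eqref{eq:finite-bayes}, thereby recovering the discrete-case result of \secref{sec:finite-result}.

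The hard part will be the careful bookkeeping in the passage from integrals to sums: one must verify that a finitely supported measure acts as a (density-weighted) counting measure so that \(\int f = \sum f\), and confirm that the density-function representation of a channel in \(\Cat{sfKrn}\) genuinely coincides with its stochastic-matrix representation in \(\Kl(\Dst)\). A secondary subtlety is that the \(\overset{\mu}{\sim}\) and \(\overset{\nu}{\sim}\) relations used in the s-finite proof must be seen to collapse to strict equalities here, which follows because almost-equality reduces to equality under finite support, as noted at the start of \secref{sec:finite-result}.
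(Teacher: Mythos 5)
Your proposal is correct and follows essentially the same route as the paper: substitute the density representation of the prior, collapse the integrals over the finitely supported measures $\mu$, $\nu$, $\upsilon$ to sums, expand the almost-inverse as a reciprocal normaliser, and recognise the resulting ratio as the discrete Bayes formula \eqref{eq:finite-bayes} under Chapman--Kolmogorov composition in $\Kl(\Dst)$. Your closing remark that the $\overset{\mu}{\sim}$ and $\overset{\nu}{\sim}$ relations collapse to equalities under finite support is a point the paper makes only at the start of \secref{sec:finite-result} rather than in the corollary itself, but it is consistent with the intended argument.
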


\section{Lawfulness of Bayesian lenses}
\label{sec:orgbd8ea72}
\label{sec:lawfulness}

The study of Cartesian lenses substantially originates in the context of bidirectional transformations of data in the computer science and database community \citep{Foster2007Combinators,Riley2018Categories}, where we can think of the \(\mathsf{view}\) (or \(\mathsf{get}\)) function as returning part of a database record, and the \(\mathsf{update}\) (or \(\mathsf{put}\)) function as `putting' a part into a record and returning the updated record. In this setting, extra structure known as \emph{lens laws} can be imposed on lenses to ensure that they are `well-behaved' with respect to database behaviour. Well-behaved and `very well-behaved' lenses in the database context roughly correspond to our notion of `exact' Bayesian lens, but as we will see, even exact Bayesian lenses are only weakly lawful in the database sense.

We will concentrate on the three lens laws that have attracted recent study \citep{Riley2018Categories,Boisseau2020String}: \texttt{GetPut}, \texttt{PutGet}, and \texttt{PutPut}. A Cartesian lens satisfying the former two is \emph{well-behaved} while a lens satisfying all three is \emph{very well-behaved}, in the terminology of \citet{Foster2007Combinators}. Informally, \texttt{GetPut} says that getting part of a record and putting it straight back returns an unchanged record; \texttt{PutGet} says that putting a part into a record and then getting it returns the same part that we started with; and \texttt{PutPut} says that putting one part and then putting a second part has the same effect on a record as just putting the second part (that is, \(\mathsf{update}\) completely overwrites the part in the record). We will express these laws graphically, and consider them each briefly in turn.

Note first that we can lift any channel \(c\) in the base category \(\cat{C}\) into any state-dependent fibre \(\Fun{Stat}(A)\) using the constant (identity-on-objects) functor taking \(c\) to the constant-valued state-indexed channel \(\rho \mapsto c\) that maps any state \(\rho\) to \(c\). We can lift diagrams in \(\cat{C}\) into the fibres accordingly.

\paragraph{\texttt{GetPut}}
\label{sec:org71eb4b7}

\begin{defn}
A lens \(\optar{c}{c^\dag}\) is said to satisfy the \texttt{GetPut} law if it satisfies the left isomorphism in \eqref{eq:getput-law} below. Equivalently, because the copier induced by the Cartesian product is natural (\emph{i.e.}, \(\copier \circ f \cong (f \times f) \circ \copier\)), for any state \(\pi\), we say that \(\optar{c}{c^\dag}\) satisfies \texttt{GetPut} with respect to \(\pi\) if it satisfies the right isomorphism in \eqref{eq:getput-law} below.
\begin{equation} \label{eq:getput-law}
\tikzfig{img/bayes-getput1}
\hspace{0.06\linewidth}
\Longrightarrow
\hspace{0.06\linewidth}
\tikzfig{img/bayes-getput2}
\end{equation}
\end{defn}

\begin{prop}
When \(c\) is causal, the exact Bayesian lens \(\optar{c}{c^\dag}\) satisfies the \texttt{GetPut} law with respect to any state \(\pi\) for which \(c\) admits Bayesian inversion.
\begin{proof}
Starting from the right-hand-side of \eqref{eq:getput-law}, we have the following chain of isomorphisms
\[ \tikzfig{img/bayes-getput3} \]
where the first holds by the unitality of \(\bcopier\) \eqref{eq:comonoid-law}, the second by the causality of \(c\), the third since \(c\) admits Bayesian inversion \eqref{eq:bayes-abstr} with respect to \(\pi\), and the fourth again by unitality of \(\bcopier\).
\end{proof}
Note that by Bayes' law, exact Bayesian lenses only satisfy \texttt{GetPut} with respect to states. This result means that, if we think of \(c\) as generating a prediction \(c \klcirc \pi\) from a prior belief \(\pi\), then if our observation exactly matches the prediction, updating the prior \(\pi\) according to Bayes' rule results in no change.
\end{prop}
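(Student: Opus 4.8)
The plan is to read the right-hand isomorphism of \eqref{eq:getput-law} concretely: with the prior \(\pi\) fixed, \texttt{GetPut} asserts that viewing \(\pi\) through \(c\) and then feeding the resulting prediction \(c \klcirc \pi\) back through the update \(c^\dag_\pi\) leaves the prior unchanged, i.e.\ that \(c^\dag_\pi \klcirc c \klcirc \pi \cong \pi\). I would establish this entirely inside the copy-delete graphical calculus of \secref{sec:graph-calc}, with the defining equation of Bayesian inversion \eqref{eq:bayes-abstr} doing the real work and everything else being counit bookkeeping.

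First I would start from the view-then-update composite \(c^\dag_\pi \klcirc c \klcirc \pi\) and, reading the counit (unitality) law \eqref{eq:comonoid-law} backwards, introduce a copier \(\bcopier_Y\) together with a discard \(\ground_Y\) on the \(Y\)-wire, rewriting the composite as the \(X\)-marginal of \((c^\dag_\pi \otimes \id_Y) \klcirc \bcopier_Y \klcirc c \klcirc \pi\) --- that is, as the right-hand (\(c^\dag_\pi\)) disintegration of the joint state of \eqref{eq:bayes-abstr}, with \(Y\) subsequently discarded. The key step is then to apply \eqref{eq:bayes-abstr} a single time --- which is exactly where the hypothesis that \(c\) admits Bayesian inversion with respect to \(\pi\) is used --- swapping this \(c^\dag_\pi\)-form for the forward \(c\)-form \((\id_X \otimes c) \klcirc \bcopier_X \klcirc \pi\), still marginalized onto \(X\). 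At this point causality of \(c\) enters: since \(\ground \klcirc c = \ground\), discarding the \(Y\)-branch collapses the forward copy to \((\id_X \otimes \ground_X) \klcirc \bcopier_X \klcirc \pi\), and a final application of the counit law \eqref{eq:comonoid-law} reduces this to \(\pi\), as required.

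I do not expect any individual step to be hard; the genuine obstacle is bookkeeping the copiers and discards so that exactly one application of \eqref{eq:bayes-abstr} lines up against the counit simplifications on either side, rather than leaving a stray residual wire. The conceptually load-bearing point --- and the reason causality appears in the hypotheses --- is that \(\ground \klcirc c = \ground\) is precisely what licenses collapsing the forward branch after the swap: without it, discarding \(c\)'s output would not return the input, and the law would genuinely fail (indeed, this is exactly the sense in which acausal or \emph{partial} channels break \texttt{GetPut}). So the whole content of the proof is concentrated in recognising the marginalized Bayesian-inversion equation hidden inside the \texttt{GetPut} picture, and in flagging causality as the assumption that makes the collapse valid.
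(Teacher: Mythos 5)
Your proposal is correct and is essentially the paper's own proof read in the opposite direction: the paper starts from \(\pi\) and applies unitality, causality, the Bayesian-inversion equation \eqref{eq:bayes-abstr}, and unitality again to reach \(c^\dag_\pi \klcirc c \klcirc \pi\), whereas you run the same four rewrites from \(c^\dag_\pi \klcirc c \klcirc \pi\) back to \(\pi\). Your identification of \eqref{eq:bayes-abstr} as the load-bearing step and of causality (\(\ground \klcirc c = \ground\)) as the hypothesis that licenses collapsing the discarded forward branch matches the paper exactly.
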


\paragraph{\texttt{PutGet}}
\label{sec:orgde44634}

The \texttt{PutGet} law is characterized for a lens \(\optar{v}{u}\) by the following isomorphism:
\[ \tikzfig{img/bayes-putget} \]
In general, \texttt{PutGet} does not hold for exact Bayesian lenses \(\optar{v}{u} = \optar{c}{c^\dag}\). However, because \texttt{GetPut} holds with respect to states \(\pi\), we do have \(c \circ c^\dag \circ (\pi \times c \klcirc \pi) \cong (\ground \times \id) \circ (\pi \times c \klcirc \pi)\); that is, \texttt{PutGet} holds for exact Bayesian lenses \(\optar{c}{c^\dag}\) for the input \(\pi \times c \klcirc \pi\).

The reason \texttt{PutGet} fails to hold in general is that Bayesian updating by construction mixes information from the prior and the observation, according to the strength of belief. Consequently, updating a belief according to an observed state and then producing a new prediction need not result in the same state as observed; unless, of course, the prediction already matches the observation.

\paragraph{\texttt{PutPut}}
\label{sec:org6c0fef6}

Finally, the \texttt{PutPut} law for a lens \(\optar{v}{u}\) is characterized by the following isomorphism:
\[ \tikzfig{img/bayes-putput} \]
\texttt{PutPut} fails to hold for exact Bayesian lenses for the same reason that \texttt{PutGet} fails to hold in general: updates mix old and new beliefs, rather than entirely replace the old with the new.

\paragraph{Comment}
\label{sec:org773224b}

The lens laws were originally defined in the context of computer databases, where there is assumed to be no uncertainty: database logic is Boolean (database instances are objects in a presheaf topos \citep{Fong2018Seven}), so a `belief' is either true or false. Consequently, there can be no `mixing' of beliefs; and in database applications, such mixing may be highly undesirable (putting epistemological concerns aside). Bayesian lenses, on the other hand, live in a fuzzier world: the present author's interest in Bayesian lenses originates in their application to describing cognitive and cybernetic processes such as perception and action, and here the ability to mix beliefs according to uncertainty is desirable.

Possibly it would be of interest to give analogous information-theoretic lens laws that characterize exact and approximate Bayesian lenses and their generalizations; and we might then expect the `Boolean' lens laws to emerge in the extremal case where there is no uncertainty and only Dirac states. We leave such an endeavour for future work: Bayes' law \eqref{eq:bayes-abstr} is sufficiently concise and productive for our purposes here.

\section{References}
\label{sec:org6076cdf}
\printbibliography[heading=none]

\appendix
\section{Extraneous proofs}
\label{sec:org75fecef}
\subsection{Proposition \ref{prop:almost-inverse-almost-equal}: Almost-inverses are almost-equal}
\label{sec:org0826ef9}
\label{sec:proof:almost-inverse-almost-equal}
\begin{proof}
By assumption, we have
\[ \tikzfig{img/almost-inv-almost-eq-1} \; . \]
Then, by the definition of almost-equality (Definition \ref{def:almost-eq},:
\begin{equation} \label{eq:proof:almost-inverse-almost-equal-1}
\tikzfig{img/almost-inv-almost-eq-2} \; .
\end{equation}
We seek to show that
\begin{equation} \label{eq:proof:almost-inverse-almost-equal-2}
\tikzfig{img/almost-inv-almost-eq-3} \; .
\end{equation}
Substituting the right-hand-side of \eqref{eq:proof:almost-inverse-almost-equal-1} for $\pi$ in the left-hand-side of \eqref{eq:proof:almost-inverse-almost-equal-2}, we have that
\[ \tikzfig{img/almost-inv-almost-eq-4} \]
\[ \tikzfig{img/almost-inv-almost-eq-5} \]
which establishes the result. The second isomorphism follows by the associativity of $\bcopier$ \eqref{eq:comonoid-law}, and the third \emph{ex hypothesi} and by the unitality of $\bcopier$ \eqref{eq:comonoid-law}.
\end{proof}

\subsection{Proposition \ref{prop:bayes-density-graph}: Bayesian inversion via density functions}
\label{sec:org78e8373}
\label{sec:proof:bayes-density-graph}
\begin{proof}
We seek to establish the relation \eqref{eq:bayes-abstr} characterizing Bayesian inversion. By substituting the density function representations for $c$ and $c^\dag_\pi$ into the right-hand-side of \eqref{eq:bayes-abstr}, we have
\[ \tikzfig{img/bayes-density-graph-1} \]
\[ \tikzfig{img/bayes-density-graph-2} \]
\[ \tikzfig{img/bayes-density-graph-3} \]
as required. The second isomorphism holds by the associativity of $\bcopier$ \eqref{eq:comonoid-law}, the third since $p^{-1}$ is an almost-inverse \emph{ex hypothesi}, and the fourth by the unitality of $\bcopier$ \eqref{eq:comonoid-law} and the density function representation of $c$.
\end{proof}

\subsection{Lemma \ref{lemma:eff-chan-almost-eq}}
\label{sec:org2fd94ef}
\label{sec:proof:eff-chan-almost-eq}
\begin{proof}
By assumption, we have
\[
\tikzfig{img/eff-chan-almost-eq-1}
\hspace{0.06\linewidth}
\text{and}
\hspace{0.06\linewidth}
\tikzfig{img/eff-chan-almost-eq-2}
\; .
\]
Consequently,
\[ \tikzfig{img/eff-chan-almost-eq-3} \]
\[ \tikzfig{img/eff-chan-almost-eq-4} \]
and so, by the commutativity of $\bcopier$ \eqref{eq:comonoid-commute}, $\alpha \overset{\mu}{\sim} \beta$. The first isomorphism holds by the definition of $\alpha$, the second by the associativity of $\bcopier$ \eqref{eq:comonoid-law}, the third since $q \overset{\mu}{\sim} r$, the fourth by the associativity of $\bcopier$, and the fifth by the definition of $\beta$.
\end{proof}

\subsection{Lemma \ref{lemma:eff-blocks-almost-eq}}
\label{sec:org905e17a}
\label{sec:proof:eff-blocks-almost-eq}
\begin{proof}
We start from the left-hand-side of the relation defining almost-equality (Definition \ref{def:almost-eq}), substituting the density function representation for $d$. This gives the following chain of isomorphisms:
\[ \tikzfig{img/eff-blocks-almost-eq-1} \]
\[ \tikzfig{img/eff-blocks-almost-eq-2} \]
The second isomorphism holds by the associativity of $\bcopier$ \eqref{eq:comonoid-law}; the third since $f \overset{\nu}{\sim} g$; the fourth by associativity of $\bcopier$; and the fifth by the density function representation for $d$. This establishes the required relation.
\end{proof}

\section{Review of basic coend calculus}
\label{sec:orgb43af07}
\label{sec:coends}

In \secref{sec:optics}, we introduced coends informally in the context of optics. Here, we briefly review some basic properties of coends and their associated \emph{coend calculus}. We continue to work in the general setting of a \(\Cat{V}\text{-enriched}\) category \(\cat{C}\), where \(\Cat{V}\) is assumed to be cocomplete and Cartesian closed.

\paragraph{(Co)presheaves act by composition}
\label{sec:orgcef9436}

Recall that for every object \(X : \cat{C}_0\), we have the representable presheaf \(\cat{C}(-, X) : \cat{C}\op \to \Cat{V}\) and representable copresheaf \(\cat{C}(X, -) : \cat{C} \to \Cat{V}\). \(\cat{C}(-,X)\) represents the collection of morphisms \emph{into} X (\emph{i.e.} with codomain X), and \(\cat{C}(X,-)\) the collection of morphisms \emph{out of} X (\emph{i.e.} with domain X).

Being functors, representable (co)presheaves act on objects and morphisms in \(\cat{C}\). On objects, \(\cat{C}(X, Y)\) is the object in \(\Cat{V}\) of morphisms \(X \klto Y\) in \(\cat{C}\). Given a morphism \(h : \cat{C}(Y, Z)\), the copresheaf \(\cat{C}(X, -)\) takes \(h\) to the map \(\cat{C}(X, h) : \cat{C}(X, Y) \to \cat{C}(X, Z)\) in \(\Cat{V}\) which acts by postcomposition: that is, \(\cat{C}(X, h)\) takes \(g : \cat{C}(X, Y)\) to \(h \klcirc g : \cat{C}(X, Z)\). Similarly, the action of the presheaf \(\cat{C}(-, Y)\) is precomposition: given a morphism \(f : \cat{C}(W, X)\), the map \(\cat{C}(f, Y) : \cat{C}(X, Y) \to \cat{C}(W, Y)\) takes \(g : \cat{C}(X, Y)\) to \(g \klcirc f : \cat{C}(W, Y)\). Note that the action of a presheaf is contravariant: presheaves `pull back'; copresheaves `push forwards'.

\paragraph{Profunctors}
\label{sec:org87200b5}

By allowing the functors \(\cat{C}(X, -)\) and \(\cat{C}(-, Y)\) to vary in both arguments simultaneously, we obtain the \(\mathsf{hom}\) \emph{profunctor} \(\cat{C}(-,=) : \cat{C}\op \times \cat{C} \to \Cat{V}\), which picks out the object \(\cat{C}(X, Y)\) in \(\Cat{V}\) of morphisms \(X \klto Y\) in \(\cat{C}\) for each pair of objects \((X, Y)\) in \(\cat{C}\); we can think of the elements this hom object as witnessing the relation between \(X\) and \(Y\), and indeed, profunctors are categorified relations
(see \textcite[Example 5.4]{Loregian2015This} or \textcite[Chapter 4]{Fong2018Seven}).
Naturally, the action of the hom profunctor on morphisms in \(\cat{C}\) is by pulling back (precomposition) on the left and pushing forwards (postcomposition) on the right.

\paragraph{Coends}
\label{sec:org3978926}

Fix an arbitrary profunctor \(P : \cat{C}\op \times \cat{C} \to \Cat{V}\). The coend \(\int^{\cat{C}} P\) is the greatest quotient in \(\Cat{V}\) of \(\coprod_X P(X, X)\) that coequalizes the left and right actions of \(P\) on morphisms in \(\cat{C}\). \(\coprod_X P(X,X)\) is the coproduct (disjoint union) of all the objects \(P(X,X)\). That is, the elements of the coend \(\int^{\cat{C}} P\) are equivalence classes such that two elements \(u : P(W, W)\) and \(v : P(X, X)\) are related if there exists some \(f : P(X, W)\) satisfying \(u \klcirc f = f \klcirc v\); that is, \(u\) and \(v\) are related by the coend if we can `slide' some \(f\) between them. Compare this with the discussion following the definition of optics, Definition \ref{def:optics}. The universal property of the coend is then that it is the largest such quotient, so that any morphism out of any such quotient factors through the coend.

More formally, let \(f : X \klto Y\) be any morphism in \(\cat{C}\); we will denote its domain \(X\) by \(\dom(f) = X\) and its codomain \(Y\) by \(\cod(f) = Y\). We can then formally define the coend \(\int^{\cat{C}} P\) as the following coequalizer.

\begin{defn}[Coend] \label{def:coend}
Define witnesses to the left and right actions of \(P\) as follows.
\begin{equation*}
\lambda^\ast := \coprod_{\substack{W : \cat{C}, \, X : \cat{C}, \\ f \, : \, \cat{C}(W, \, X)}} \iota_{\dom(f)} \circ P\big(f, \dom(f)\big)
\hspace{0.04\linewidth}
\text{and}
\hspace{0.04\linewidth}
\rho^\ast := \coprod_{\substack{W : \cat{C}, \, X : \cat{C}, \\ f \, : \, \cat{C}(W, \, X)}} \iota_{\cod(f)} \circ P\big(\cod(f), f\big)
\end{equation*}
where, given an object \(Y : \cat{C}_0\), we denote by \(\iota_Y : P(Y, Y) \to \coprod_X P(X, X)\) the inclusion of \(P(Y,Y)\) into the coproduct \(\coprod_X P(X,X)\). Then the coend \(\int^{\cat{C}} P\) is given by the following coequalizer:
\begin{equation*}
\begin{tikzcd}
{\displaystyle\coprod_{\substack{W : \cat{C}, \, X : \cat{C}, \\ f \, : \, \cat{C}(W, \, X)}} P\big(\cod(f), \dom(f)\big)} \arrow[rr, "\lambda^{\ast}", shift left=2] \arrow[rr, "\rho^{\ast}"', shift right=2] &  & {\displaystyle\coprod_X P(X, X)} \arrow[rr, two heads] &  & \displaystyle\int^{\cat{C}} P
\end{tikzcd}
\end{equation*}
To make the `variable of integration' explicit, we often denote the coend \(\int^{\cat{C}} P\) by
\[ \int^{X : \, \cat{C}} P(X, \, X) \]
\end{defn}

\paragraph{Basic coend calculus}
\label{sec:org0590da6}

We make use of the following isomorphisms, which are equivalent to the basic categorical result that any (co)presheaf is canonically a colimit of representables
\parencite[Theorem 6.2.17]{Leinster2016Basic}.
They are easy to prove using the Yoneda lemma and the basic categorical result that \(\mathsf{hom}\) functors preserve (co)limits; for an explicit proof, we refer the reader to \citet{Loregian2015This}. The application of these to the proof of categorical results, along with the application of universal properties and the Yoneda lemma, is known as \emph{coend calculus}.

\begin{prop}[Yoneda reduction] \label{prop:coyoneda}
Let \(F : \cat{C}\op \to \Cat{V}\) be any presheaf, and \(G : \cat{C} \to \Cat{V}\) any presheaf. Then
\begin{equation} \label{eq:coyoneda}
F \cong \int^{X : \, \cat{C}} \cat{C}(-, X) \times F(X)
\hspace{0.07\linewidth}
\text{and}
\hspace{0.07\linewidth}
G \cong \int^{X : \, \cat{C}} G(X) \times \cat{C}(X, -) \; .
\end{equation}
\end{prop}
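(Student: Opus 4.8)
The two displayed isomorphisms are dual, so my plan is to prove the first (the presheaf case) in full and obtain the second (the copresheaf case) by running the same argument with $\cat{C}$ replaced by $\cat{C}\op$. Following the hint in the text, rather than exhibit the isomorphism by hand I would show that the coend $\int^{X} \cat{C}(-, X) \times F(X)$ and $F$ corepresent the same $\Cat{V}$-functor on the presheaf category $\hat{\cat{C}}$, and then invoke the Yoneda lemma to conclude they are isomorphic. Concretely, I would fix an arbitrary presheaf $G : \cat{C}\op \to \Cat{V}$ and assemble a chain of isomorphisms, natural in $G$.

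\begin{align*}
\hat{\cat{C}}\!\left( \int^{X : \, \cat{C}} \cat{C}(-, X) \times F(X),\ G \right)
&\cong \int_{X : \, \cat{C}} \hat{\cat{C}}\big( \cat{C}(-, X) \times F(X),\ G \big) \\
&\cong \int_{X : \, \cat{C}} \Cat{V}\big( F(X),\ \hat{\cat{C}}(\cat{C}(-, X), G) \big) \\
&\cong \int_{X : \, \cat{C}} \Cat{V}\big( F(X),\ G(X) \big) \\
&\cong \hat{\cat{C}}(F, G)
\end{align*}

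The first step is exactly the input flagged in the excerpt, that $\mathsf{hom}$ functors preserve (co)limits: applying $\hat{\cat{C}}(-, G)$ turns the defining coequalizer of the coend into the corresponding equalizer, i.e. an end in $X$. The second step is the copower/hom adjunction arising from the Cartesian closure of $\Cat{V}$, which lets me slide the object $F(X)$ across the hom (the product $\times$ functions here as a $\Cat{V}$-copower precisely because $\Cat{V}$ is Cartesian). The third step is the enriched Yoneda lemma, collapsing $\hat{\cat{C}}(\cat{C}(-, X), G) \cong G(X)$. The final step recognizes the end $\int_{X} \Cat{V}(F(X), G(X))$ as the end formula for the object of $\Cat{V}$-natural transformations $F \Rightarrow G$, namely $\hat{\cat{C}}(F, G)$. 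Since the composite is natural in $G$, the Yoneda lemma for $\hat{\cat{C}}$ yields $\int^{X} \cat{C}(-, X) \times F(X) \cong F$, as claimed.

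For a reader who prefers an explicit witness, I would also note the canonical comparison map directly: on coend representatives it sends the class $\optar{f}{a}$ of a pair $(f : \cat{C}(W, X),\ a : F(X))$ to $F(f)(a) : F(W)$, with candidate inverse $b \mapsto \optar{\id_W}{b}$. Well-definedness on coend classes is exactly the relation of \defref{def:coend}, which identifies $(g \klcirc h,\, a)$ with $(h,\, F(g)(a))$; the assignment $(f, a) \mapsto F(f)(a)$ coequalizes these because $F$ is contravariant, so $F(g \klcirc h) = F(h) \circ F(g)$. The two composites then reduce to $F(\id_W) = \id$ and to an instance of the same relation with $h = \id_W$, establishing that the map is an isomorphism (natural in $W$).

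The main obstacle is bookkeeping rather than conceptual depth. One must keep the variances aligned throughout — in particular that the coend in $X$ becomes an \emph{end} in $X$ once $\hat{\cat{C}}(-, G)$ is applied — and check that the wedge (dinaturality) data produced at each stage match up, so that the Yoneda collapse in the third step is legitimately applicable pointwise and the final end is genuinely the natural-transformation object. Granting these routine verifications, every isomorphism in the chain is standard coend calculus and the result follows.
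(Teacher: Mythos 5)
Your proof is correct and follows exactly the route the paper indicates: the paper does not prove this proposition itself but defers to Loregian, citing precisely the two ingredients you use (the Yoneda lemma and preservation of colimits by $\mathsf{hom}$), and your corepresentability chain plus the explicit witness $\optar{f}{a} \mapsto F(f)(a)$ is the standard argument being referenced. The only cosmetic point is that you reuse the letter $G$ for an arbitrary test presheaf while the statement already uses $G$ for the copresheaf in the dual claim; renaming it would avoid confusion but nothing in the argument is affected.
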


\begin{cor*}
The \(\mathsf{hom}\) profunctor is the coend analogue of the Dirac delta measure: for any \(W,Y : \cat{C}\),
\begin{equation*}
\cat{C}(-, Y) \cong \int^{X : \, \cat{C}} \cat{C}(-, X) \times \cat{C}(X, Y)
\hspace{0.04\linewidth}
\text{and}
\hspace{0.04\linewidth}
\cat{C}(W, -) \cong \int^{X : \, \cat{C}} \cat{C}(W, X) \times \cat{C}(X, -) \; .
\end{equation*}
\end{cor*}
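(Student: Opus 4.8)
The plan is to recognize both isomorphisms as immediate instances of Yoneda reduction (Proposition~\ref{prop:coyoneda}), specialized to the case in which the (co)presheaf being decomposed is itself representable. For the first isomorphism, I would take the first formula of \eqref{eq:coyoneda}, namely $F \cong \int^{X : \cat{C}} \cat{C}(-, X) \times F(X)$, and instantiate the arbitrary presheaf $F$ with the representable presheaf $\cat{C}(-, Y)$ on a fixed object $Y : \cat{C}$. Evaluating this presheaf at $X$ returns the hom-object $\cat{C}(X, Y)$, so the integrand $\cat{C}(-, X) \times F(X)$ becomes $\cat{C}(-, X) \times \cat{C}(X, Y)$ while the left-hand side $F$ becomes $\cat{C}(-, Y)$, yielding the first claimed isomorphism verbatim.

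For the second isomorphism, I would dually specialize the copresheaf formula $G \cong \int^{X : \cat{C}} G(X) \times \cat{C}(X, -)$, taking $G$ to be the representable copresheaf $\cat{C}(W, -)$ on a fixed object $W : \cat{C}$. Its value at $X$ is $\cat{C}(W, X)$, so the integrand becomes $\cat{C}(W, X) \times \cat{C}(X, -)$ and the left-hand side becomes $\cat{C}(W, -)$, giving the second claim. The only point that strictly needs noting is that these substitutions are legitimate, i.e.\ that representable (co)presheaves are genuine objects of the functor categories to which Yoneda reduction applies; this is immediate, since representables are the prototypical (co)presheaves.

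There is no real obstacle here: all the content is carried by Proposition~\ref{prop:coyoneda}, and the corollary merely records the particularly useful representable case. The suggestive description of the hom profunctor as ``the coend analogue of the Dirac delta'' is not part of the proof but an interpretive remark: the coend integrates out the intermediate variable $X$ against the hom profunctor and returns the hom unchanged, exactly as convolution of a function against a Dirac delta leaves the function fixed. I would present this analogy only as heuristic motivation alongside the two-line specialization that constitutes the actual argument.
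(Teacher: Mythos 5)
Your proposal is correct and matches the paper's intent exactly: the paper states this corollary without proof precisely because it is the immediate specialization of Proposition~\ref{prop:coyoneda} to the representable presheaf $\cat{C}(-,Y)$ and the representable copresheaf $\cat{C}(W,-)$, which is the substitution you carry out. Nothing further is needed.
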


Along with the idea of presheaves as `pulling back' by precomposition and copresheaves as `pushing forwards' by postcomposition, these last two isomorphisms supply useful intuition about Yoneda reduction. The first says roughly that the object of maps into \(Y\) is the same as the object of maps into some \(X\) paired with maps into \(Y\) such that we can slide forwards by composition from \(X\) into \(Y\); for instance, we can choose the representative given by \(X = Y\) and where the second map in the pair is \(\id_Y\). The second isomorphism is just the dual, where instead of sliding forwards, we slide back.
\end{document}